\newtheorem{thm}{Theorem}[section]
\newtheorem{prop}[thm]{Proposition}
\newtheorem{lem}[thm]{Lemma}
\newtheorem{definition}[thm]{Definition}
\newtheorem{rem}[thm]{Remark}
\newcommand{\Z}{\mathbb{Z}}
\newcommand{\N}{\mathbb{N}}
\newcommand{\R}{\mathbb{R}}
\newcommand{\C}{\mathbb{C}}
\newcommand{\del}{\partial}
\newcommand{\delb}{\overline{\partial}}
\begin{document}
\title[The geometric quantizations]{The geometric quantizations and 
the measured Gromov-Hausdorff convergences}
\author{Kota Hattori}
\address{Keio University, 
3-14-1 Hiyoshi, Kohoku, Yokohama 223-8522, Japan}
\email{hattori@math.keio.ac.jp}
\thanks{Partially 
supported by Grant-in-Aid for Young Scientists (B) Grant Number
16K17598 and by Grant-in-Aid for 
Scientific Research (C) 
Grant Number 19K03474.}
\subjclass[2010]{53C07, 58J50}
\maketitle
\begin{abstract}
On a compact symplectic manifold 
$(X,\omega)$ with a prequantum line bundle
$(L,\nabla,h)$, 
we consider the 
one-parameter family of 
$\omega$-compatible 
complex structures which converges to the 
real polarization coming from the 
Lagrangian torus fibration. 
There are several researches which show 
that the holomorphic sections of the line bundle 
localize at Bohr-Sommerfeld fibers. 
In this article we consider the one-parameter 
family of the Riemannian metrics on the 
frame bundle of $L$ determined by the complex structures
and $\nabla,h$, and we can see that their diameters 
diverge. 
If we fix a base point in some 
fibers of the Lagrangian fibration we 
can show that they measured 
Gromov-Hausdorff converge to 
some pointed metric measure spaces 
with the isometric $S^1$-actions, which may depend 
on the choice of the base point. 
We observe that 
the properties of the $S^1$-actions 
on the limit spaces actually depend on 
whether the base point is in 
the Bohr-Sommerfeld fibers 
or not. 
\end{abstract}
\section{Introduction}
In this article we introduce a new approach 
to the geometric quantization from the viewpoint 
of the convergence of the 
Riemannian manifolds with respect to 
the measured Gromov-Hausdorff 
topology. 
On a compact symplectic manifold 
$(X,\omega)$ of dimension $2n$, 
a prequantum line bundle $(L,\nabla,h)$ 
is a triple of a complex line bundle 
$L$, hermitian metric $h$ and 
connection $\nabla$ preserving $h$ 
with curvature form $F^\nabla=-\sqrt{-1}\omega$. 
By considering the following 
geometric structures compatible with $\omega$, 
we can equip $L$ 
with the finite dimensional vector subspace
consisting of the special sections of $L$. 
The first one is an 
$\omega$-compatible complex structure 
$J$, then denote by 
\begin{align*}
H^0(X_J,L)
=\{ s\in C^\infty(L);\, \nabla_{\delb_J} s=0\}
\end{align*}
the space of all of the holomorphic sections 
of $L$. 

The second one is a Lagrangian fibration 
$\mu\colon X\to Y$, where 
$Y$ is a smooth manifold, 
all of the points in $Y$ 
are regular values of $\mu$, 
and all fibers are compact and connected. 
Then put 
\begin{align*}
V_\mu
&:= \bigoplus_{y\in Y}H^0\left( \mu^{-1}(y),
L|_{\mu^{-1}(y)},\nabla|_{\mu^{-1}(y)}\right),\\
H^0\left( \mu^{-1}(y),
L|_{\mu^{-1}(y)},\nabla|_{\mu^{-1}(y)}\right)
&:= 
\left\{ s\in C^\infty(L|_{\mu^{-1}(y)});\,
\nabla|_{\mu^{-1}(y)} s\equiv 0\right\}.
\end{align*}
$\mu^{-1}(y)$ is called a 
{\it Bohr-Sommerfeld fiber} if 
$L|_{\mu^{-1}(y)}$ 
has nontrivial parallel sections. 
Tyurin showed in \cite[Proposition 3.2]{Tyurin2002} 
that if $X$ is compact then 
there are at most finitely many 
Bohr-Sommerfeld fibers, accordingly, 
$\dim V_\mu$ is finite. 

In many examples of symplectic manifolds 
with some complex structures and 
Lagrangian fibrations, it is observed that 
\begin{align*}
\dim V_\mu = \dim H^0(X_J,L)
\end{align*}
when the Kodaira vanishing holds, 
which can be interpreted as the localization of 
the Riemann-Roch index to the Bohr-Sommerfeld fibers, 
and discussed by Andersen \cite{andersen1997}, 
by Fujita, Furuta and Yoshida \cite{FFY2010}, and 
by Kubota \cite{Kubota2016}. 

Moreover, on smooth toric varieties, 
Baier, Florentino, Mour\~{a}o and Nunes 
have constructed a one parameter family 
of the pairs of the complex structures 
and the basis of the spaces of 
holomorphic sections of $L$, 
then showed that the holomorphic sections localize on 
the Bohr-Sommerfeld fibers in \cite{BFMN2011}. 
The similar phenomena were observed in 
the case of the abelian varieties by Baier, Mour\~{a}o and Nunes \cite{BMN2010} 
and the flag varieties by Hamilton and Konno \cite{HamiltonKonno2014}.
In these examples, 
the family of complex structures and 
holomorphic sections are described concretely.

In the context of the geometric quantization, 
the $\omega$-compatible complex structures 
and Lagrangian fibrations are treated uniformly 
by using the notion of polarizations. 
The one-parameter families of 
complex structures given in the above papers 
are taken such that the 
K\"ahler polarizations corresponding to them 
converge to the 
real polarization corresponding to the Lagrangian fibration. 

Recently, Yoshida showed the 
localization of holomorphic sections of prequantum line bundle 
to the Bohr-Sommerfeld fiber 
if $X$ admits a Lagrangian fibration with 
a complete base  
in \cite{yoshida2019adiabatic}, 
where the family of complex structures 
are taken such that it converges to 
the real polarization corresponding to the 
Lagrangian fibration.

In this article, 
we also study the behavior 
of holomorphic sections of $L$ 
where the family of complex structures 
converges to the real polarization 
from the view of the point of the 
measured Gromov-Hausdorff convergence. 
Fix an $\omega$-compatible 
complex structure $J$. 
Then $H^0(X_J,L)$ can be identified with 
the eigenspace of a Laplace operator as follows. 
Put $S:=\{ u\in L;\,h(u,u)=1\}\subset L$ be 
the orthogonal frame bundle of $(L,h)$, 
then there is the standard identification 
\begin{align*}
C^\infty(X,L) \cong 
(C^\infty(S)\otimes \C)^\rho,
\end{align*}
where $\rho\colon S^1\to GL(\C)$ 
is the $1$-dimensional unitary 
representation of $S^1$ 
defined by $\rho(e^{\sqrt{-1}t}):=e^{\sqrt{-1}t}$, 
and the $S^1$-action on 
$C^\infty(S)\otimes \C$ is defined by 
\begin{align*}
\{ e^{\sqrt{-1}t}\cdot(f\otimes \xi)\} (u)
:= e^{-\sqrt{-1}t}f(ue^{\sqrt{-1}t})\otimes \xi
\end{align*}
for any $f\in C^\infty(S)$, $\xi\in\C$ and 
$u\in S$. 
The connection $\nabla$ gives the 
connection $1$-form on $S$ and 
the decomposition of $TS$ into 
the horizontal and vertical subspaces. 
Then we have the Riemannian metric $\hat{g}$ 
on $S$ which respects the connection form 
and 
the K\"ahler metric $g_J:=\omega(\cdot,J\cdot)$. 
The precise definition of $\hat{g}$ is 
given by Section \ref{principal metric}. 
Denote by $\Delta^{\hat{g}}$ the Laplace operator 
of $\hat{g}$. Since $S^1$ acts on $(S,\hat{g})$ 
isometrically, 
the $\C$-linear extension of 
$\Delta_{\hat{g}}$ gives the operator acting 
on $(C^\infty(S)\otimes \C)^\rho$. 
Then we can see 
that $H^0(X_J,L)$ is identified with 
the eigenspace of 
\begin{align*}
\Delta_{\hat{g}}\colon (C^\infty(S)\otimes \C)^\rho
\to (C^\infty(S)\otimes \C)^\rho
\end{align*}
associate with the eigenvalue $n+1$.

Now, we suppose that a one-parameter 
family of the $\omega$-compatible 
complex structures $\{ J_s\}_{s>0}$
on $X$ is given, 
then we consider the one-parameter 
family of the operators 
\begin{align*}
\Delta_{\hat{g}_s}\colon (C^\infty(S)\otimes \C)^\rho
\to (C^\infty(S)\otimes \C)^\rho.
\end{align*}
There are several research of the spectral convergence of 
the metric Laplacian on Riemannian manifolds 
or the connection Laplacians on vector bundles under the 
convergence of the spaces in the sense of the 
measured Gromov-Hausdorff topology 
\cite{Cheeger-Colding3}\cite{Fukaya1987}\cite{Kasue2011}\cite{KuwaeShioya2003}\cite{Lott-Dirac2002}\cite{Lott-form2002}. 
Therefore, there should be 
the significant relation between 
the convergence of principal bundle 
$S$ with the connection metric $\hat{g}_s$ 
and the convergence of holomorphic sections 
with respect to $J_s$. 
This article focus on the 
convergence of $(S,\hat{g}_s,p)$ as $s\to 0$ 
in the sense of the pointed 
$S^1$-equivariant measured Gromov-Hausdorff 
topology and we study the metric measure spaces 
appearing as the limit.

Now we explain the main result of this article. 
Let $(X,\omega)$ be a symplectic manifold 
of dimension $2n$, which is not necessarily to be 
compact, 
$(L,\nabla,h)$ be a prequantum line bundle 
and $\{ J_s\}_{0<s\le 1}$ be 
a smooth family of $\omega$-compatible 
complex structures. 
Assume that there is a smooth map 
$\mu \colon X\to Y$, where $Y$ is an $n$-dimensional 
smooth manifold, and for any regular values 
$y$ of $\mu$, $\mu^{-1}(y)$ is 
a compact connected Lagrangian submanifold. 
Fix a regular value $y$. 
We assume that $\{ J_s\}_{0<s\le 1}$ 
converges to the real polarizations 
induced by $\mu$ near 
$\mu^{-1}(y)$ as $s\to 0$, 
there is a constant $\kappa\in\R$ such that 
${\rm Ric}_{g_{J_s}} \ge \kappa g_{J_s}$ for 
all $s$. 
We also suppose additional assumptions 
which are precisely described in 
$\spadesuit$ of Section 
\ref{bdd ricci}. 
Let $g_{m,\infty}$ and $\mu_\infty$ 
be a Riemannian metric and a measure 
on $\R^n\times S^1$ defined by 
\begin{align*}
g_{m,\infty}&:=\frac{1}{m^2(1+\| y\|^2)}(dt)^2 
+\sum_{i=1}^n (dy_i)^2,\\
d\mu_\infty&:=dy_1\cdots dy_ndt,
\end{align*}
where $m$ is a positive integer, 
$y=(y_1,\ldots,y_n)\in\R^n$ 
and $e^{\sqrt{-1}t}\in S^1$. 
We define the isometric $S^1$-action 
on 
$(\R^n\times S^1,g_{m,\infty},\mu_\infty)$ 
by $(y,e^{\sqrt{-1}t})\cdot e^{\sqrt{-1}\tau}:=(y,e^{\sqrt{-1}(t+m\tau)})$ for $e^{\sqrt{-1}\tau}\in S^1$. 
The followings are the main results of this 
article. 
\begin{thm}
Let $m$ be a positive integer and 
$u\in S|_{\mu^{-1}(y)}$. 
Assume that $\mu^{-1}(y)$ is a Bohr-Sommerfeld 
fiber of $L^{\otimes m}$ and 
not a Bohr-Sommerfeld 
fiber of $L^{\otimes m'}$ for any $0<m'<m$. 
Then for some positive constant 
$K>0$, 
the family of pointed metric measure spaces with 
the isometric $S^1$-action 
\begin{align*}
\left\{ \left( S, \hat{g}_s, 
\frac{\mu_{\hat{g}_s}}{K\sqrt{s}^n},u
\right)\right\}_s
\end{align*}
converges to $\left( \R^n\times S^1, g_{m,\infty}, \mu_{\infty},(0,1)\right)$ as $s\to 0$ 
in the sense of the pointed $S^1$-equivariant 
measured Gromov-Hausdorff topology. 
\label{main 1}
\end{thm}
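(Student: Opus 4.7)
The plan is to build an explicit local model of $(S,\hat g_s)$ in action-angle coordinates near $\mu^{-1}(y)$, produce a candidate Hausdorff approximation to $(\R^n\times S^1,g_{m,\infty})$, verify the metric matching by analysing the collapse of the fibre tori, and promote the whole picture to pointed $S^1$-equivariant measured Gromov--Hausdorff convergence using the Ricci lower bound.

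First, by Arnold--Liouville, I fix action-angle coordinates $(y_1,\dots,y_n,\theta_1,\dots,\theta_n)$ on a neighbourhood $U$ of $\mu^{-1}(y)$ with $\omega=\sum dy_i\wedge d\theta_i$, each $\theta_i$ of period $2\pi$ along fibres, and $\mu^{-1}(y)=\{y=0\}$. The Bohr--Sommerfeld hypothesis at the minimal level $m$ permits a trivialisation of $L|_U$ with $\nabla=d-\sqrt{-1}\alpha$ and $\alpha=\sum_i(y_i+k_i/m)d\theta_i$ for integers $k_1,\dots,k_n,m$ satisfying $\gcd(k_1,\dots,k_n,m)=1$. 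Writing $t\in\R/2\pi\Z$ for the fibre coordinate on $S\to X$ and $\gamma=dt+\alpha$ for the connection $1$-form, hypothesis $\spadesuit$ yields, up to controlled error,
\begin{align*}
\hat g_s=\tfrac{1}{s}\sum_i dy_i^2+s\sum_i d\theta_i^2+\gamma^2,
\end{align*}
and a block-determinant computation gives $d\mu_{\hat g_s}=dy_1\cdots dy_n\,d\theta_1\cdots d\theta_n\,dt$.

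Next I rescale by $Y_i:=y_i/\sqrt s$ and set $K:=(2\pi)^n$, so that $d\mu_{\hat g_s}/(K\sqrt s^n)=dY\,\prod_i(d\theta_i/2\pi)\,dt$, and define the candidate $\Phi_s\colon S|_U\to\R^n\times S^1$ by $(y,\theta,t)\mapsto\bigl(y/\sqrt s,\,e^{\sqrt{-1}(mt+\sum_ik_i\theta_i)}\bigr)$. The exponent is well defined modulo $2\pi$ precisely because $\gcd(k_1,\dots,k_n,m)=1$, and the principal $S^1$-action $t\mapsto t+\tau$ on $S$ intertwines with the stated limit action $t_\infty\mapsto t_\infty+m\tau$ on $\R^n\times S^1$. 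The core metric step is to verify that $\Phi_s$ is an $\varepsilon$-Hausdorff approximation on each fixed ball about the base point. Each $T^{n+1}$-fibre of $S|_U\to\R^n_Y$ is a flat torus whose geodesic distance is the minimum, over lattice translates in the universal cover, of the straight-line distance. Setting $\tilde t=t+\sum_i(k_i/m)\theta_i$ and $\xi=\sqrt s\,|\vec n|$ for a lattice shift parallel to $Y$, the optimisation reduces to
\begin{align*}
\min_{\xi\ge 0}\sqrt{\xi^2+(\tilde t_0-\|Y\|\xi)^2}=\frac{\tilde t_0}{\sqrt{1+\|Y\|^2}},
\end{align*}
so the collapsed fibre is a circle, parametrised by $t_\infty=m\tilde t\in\R/2\pi\Z$, of length $2\pi/(m\sqrt{1+\|Y\|^2})$; combined with the orthogonal $\sum dY_i^2$ this reproduces exactly $g_{m,\infty}$, and integrating out the $\theta$-fibres matches $\mu_\infty$.

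Finally, the uniform Ricci lower bound on $g_{J_s}$ lifts, via the standard connection-metric formula for principal $S^1$-bundles, to a uniform Ricci lower bound on $\hat g_s$, so Gromov--Fukaya equivariant pointed precompactness applies; together with the explicit $\Phi_s$ this pins the pointed mGH limit down to $(\R^n\times S^1,g_{m,\infty},\mu_\infty,(0,1))$ with the stated $S^1$-action. The main obstacle is exactly the quasi-lattice optimisation above: controlling its error uniformly on fixed balls about the base point, and ruling out additional limit points from paths wandering to large $\theta$ or large $Y$ -- this is precisely the role of hypothesis $\spadesuit$ on $J_s$ and of the Ricci lower bound, which together supply the uniform estimates needed to close the argument.
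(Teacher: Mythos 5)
Your route is genuinely different from the paper's: where the paper passes to the $m$-fold cover $U_\Phi$ of the action-angle neighbourhood (Proposition \ref{normalize of connection}) to kill the fractional holonomy and work with a $\Z/m\Z$-equivariant connection form $-\sqrt{-1}x_i d\theta^i$, you keep the fractional term $\sum_i(k_i/m)d\theta_i$ in the trivialization of $L|_U$ itself and build the approximation $\Phi_s$ with the $\gcd(k_1,\dots,k_n,m)=1$ condition encoding the strict $m$-BS hypothesis. That encoding is correct and is essentially the surjectivity of the paper's $\Phi\colon\Z^n\to\Z/m\Z$ in different clothing, and your $S^1$-equivariance check for $\Phi_s$ is right. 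The quasi-lattice optimisation also produces the correct fibre circumference $2\pi/(m\sqrt{1+\|Y\|^2})$, which matches the paper's $g_{m,\infty}$.

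However, there is a genuine gap at the step where you write that hypothesis $\spadesuit$ yields, up to controlled error,
\begin{align*}
\hat g_s=\tfrac{1}{s}\sum_i dy_i^2+s\sum_i d\theta_i^2+\gamma^2.
\end{align*}
This is false. $\spadesuit$ only guarantees $A(s,\cdot)=sA^0+\mathcal O_{C^2}(s^2)$ for some matrix-valued function $A^0(x,\theta)=P^0+\sqrt{-1}Q^0$ with ${\rm Im}\,A^0>0$, and the connection metric on $S|_U$ then has the general form $\hat g_A=\sigma(dt-x_id\theta^i)^2+(Q_{ij}+P_{ik}Q^{kl}P_{lj})d\theta^id\theta^j-2P_{ik}Q^{jk}d\theta^i dx_j+Q^{ij}dx_idx_j$ (Proposition \ref{description of connection metric}), not the isotropic one you wrote. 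You cannot reduce to $A^0=\sqrt{-1}I$ by a smooth change of coordinates. The paper's proof needs the uniform Ricci lower bound precisely to prove, via Proposition \ref{lower bdd of ricci}, that $Q^0(0,\cdot)$ is \emph{constant} in $\theta$ and then needs the further normalizations $F_s$ (to absorb the $\theta$-oscillation of $P^0(0,\cdot)$ into the exact part $\frac{\del^2\mathcal H}{\del\theta^i\del\theta^j}$) and $y=\sqrt{s\bar\Theta}^{-1}x$, $\tau=\sqrt{s\bar\Theta}\theta$. You use the Ricci lower bound only as a precompactness device, which misplaces its role in the argument: without knowing $Q^0$ is constant in $\theta$, the fibre tori over a point of $\R^n$ are \emph{not} flat with a product metric, and the lattice optimisation you rely on does not apply; the Gromov--Hausdorff limit of the fibre need not be a round circle. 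Similarly, the quantitative Hausdorff-approximation estimates you flag as ``the main obstacle'' (controlling errors uniformly on fixed balls, ruling out wandering paths) are in fact the content of Propositions \ref{estimate on distance} through \ref{approximation} in the paper, and are not dispatched by $\spadesuit$ and the Ricci bound by themselves; you would still need arguments of the same character. So while the trivialization-plus-lattice picture is a clean alternative scaffolding, the proof as written does not close, and the normal form for the metric must be established, not assumed.
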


\begin{thm}
Let $u\in S|_{\mu^{-1}(y)}$
and assume that 
$\mu^{-1}(y)$ is not a Bohr-Sommerfeld 
fiber of $L^{\otimes m}$ for any 
positive integer $m$. 
Then $\{ (S, \hat{g}_s, 
\frac{\mu_{\hat{g}_s}}{K\sqrt{s}^n},u)\}_s$ 
converges to $(\R^n,{}^tdy\cdot dy, dy_1\cdots dy_n, 0)$ as $s\to 0$ in the sense of the pointed 
$S^1$-equivariant measured 
Gromov-Hausdorff topology. 
Here, the 
$S^1$-action on $\R^n$ is trivial. 
\label{main 2}
\end{thm}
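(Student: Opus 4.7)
The plan is to adapt the strategy used for Theorem~\ref{main 1}, isolating the one step where the non-Bohr-Sommerfeld hypothesis changes the conclusion. Work in local action--angle coordinates $(y_1,\ldots,y_n,\theta_1,\ldots,\theta_n)$ on a neighbourhood of $\mu^{-1}(y)$ together with a unitary trivialisation of $L$ in which $\nabla = d - \sqrt{-1}\sum y_i\,d\theta_i$. On $S$, with vertical coordinate $t$, the connection one-form is $\eta = dt - \sum y_i\,d\theta_i$ and $\hat g_s = \pi^* g_{J_s} + \eta^2$. Since $J_s$ converges to the real polarisation, $g_{J_s}$ behaves, up to controlled error, like $\frac{1}{s}\sum dy_i^2 + s\sum d\theta_i^2$, and the natural rescaling of the action coordinates is $\tilde y := (y-y_0)/\sqrt s$. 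A ball of fixed radius $R$ in $(S,\hat g_s)$ centred at $u$ then corresponds to $|\tilde y|$ of order $R$ while still covering the entire torus $T^n$ in the angle directions and the full vertical $S^1$.

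The candidate Gromov-Hausdorff approximation is the base projection $\phi_s\colon B_R(u) \to B_R(0) \subset \R^n$, $(y,\theta,t)\mapsto\tilde y$, which is manifestly $S^1$-equivariant for the trivial action on the target. The Lipschitz direction $|\phi_s(p)-\phi_s(q)| \le d_{\hat g_s}(p,q)$ is immediate from the rescaled form of the metric. The measure statement is also routine: $\sqrt{\det\hat g_s}\,dy\,d\theta\,dt$ equals $\sqrt s^{\,n}\,d\tilde y\,d\theta\,dt$ in the rescaled variables, so normalising by $K\sqrt s^{\,n}$ with $K = (2\pi)^{n+1}$ and integrating out the fibre $T^n \times S^1$ makes the pushforward of the measure converge weakly to Lebesgue measure on $\R^n$.

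The only delicate point, and the one in which the non-Bohr-Sommerfeld hypothesis is actually used, is the reverse distance estimate: for $p,q \in B_R(u)$ with $\phi_s(p) = \phi_s(q)$, produce a path in $(S,\hat g_s)$ of length $o(1)$ as $s\to 0$. Writing $p = (y,\theta,t)$ and $q = (y,\theta',t')$, concatenate a horizontal loop that wraps $T^n$ with multiplicities $a \in \Z^n$ (which has $\hat g_s$-length $2\pi\sqrt s\,\|a\|$ and rotates $t$ by $2\pi\,a\cdot y$), a direct horizontal $\theta$-motion of length $\sqrt s\,|\theta'-\theta|$, and a short vertical motion along $\partial_t$. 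The path closes up at $q$ provided $a\cdot y$ approximates $(t'-t-y\cdot(\theta'-\theta))/(2\pi)$ modulo $1$ to within the tolerated accuracy. The non-Bohr-Sommerfeld hypothesis is exactly the statement that $(y_{0,1},\ldots,y_{0,n}) \bmod \Z^n$ has infinite order in $(\R/\Z)^n$, which forces the image of the map $\Z^n \to \R/\Z$, $a\mapsto a\cdot y_0\bmod 1$, to be an infinite, hence dense, subgroup of $\R/\Z$. Consequently, for every $\epsilon>0$ there exists $A_\epsilon$ such that $\{a\cdot y_0 \bmod 1 : \|a\| \le A_\epsilon\}$ is $\epsilon$-dense in $\R/\Z$, and choosing $s$ so small that $\sqrt s\,A_\epsilon < \epsilon$ gives the desired uniform short path for all $p,q \in B_R(u)$; the mild $y$-dependence of the target modulus is absorbed into the same estimate because $|a\cdot y - a\cdot y_0| \le A_\epsilon\sqrt s\, R$.

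The main obstacle is precisely this density step; note that no Diophantine rate is needed, because one is allowed to send $s\to 0$ after fixing $\epsilon$. Precompactness of $\{(S,\hat g_s,u)\}_s$ in the pointed $S^1$-equivariant measured Gromov-Hausdorff topology follows from the bounded Ricci hypothesis, so once $\phi_s$ is seen to give the correct quantitative approximation on every bounded ball the identification of the limit with $(\R^n, \sum dy_i^2, dy_1\cdots dy_n, 0)$ is forced.
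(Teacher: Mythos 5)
Your proposal is correct and captures the argument's essential content: the non--Bohr--Sommerfeld hypothesis on $\mu^{-1}(y)$ is used precisely to make the subgroup $\{\,a\cdot y_0\ \mathrm{mod}\ 1 : a\in\Z^n\,\}\subset \R/\Z$ infinite, hence dense, which collapses the $S^1$-direction, while the $T^n$-direction collapses by the $\sqrt{s}$-scaling; this is exactly how the paper proceeds. The packaging differs slightly: the paper first proves the base convergence $(X,g_{J_s},p)\to(\R^n,{}^t dy\cdot dy,0)$ via the explicit Riemannian submersion $(y,\tau)\mapsto y$, separately establishes that the $S^1$-orbit diameters in $(S,\hat{g}_{J_s})$ shrink uniformly on balls (Proposition~\ref{shrink 1} and the estimates following it, which iterate a \emph{single} loop of small holonomy angle $N$ times rather than selecting a winding vector $a$ per target angle as you do), and then invokes the general quotient-convergence criterion of Proposition~\ref{quotient conv} to pass from the quotient $X=S/S^1$ back up to $S$ with the trivial limiting $S^1$-action --- whereas you build the $\varepsilon$-approximation $\phi_s$ to $\R^n$ directly in coordinates. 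Both routes are legitimate; the paper's factorization through Proposition~\ref{quotient conv} is what lets it reuse the already-proved base convergence, while your direct construction avoids that auxiliary lemma. (Two minor points worth tightening in a full writeup: the stated normalizing constant $K=(2\pi)^{n+1}$ ignores $\sigma$ and $\det\bar{\Theta}$; and the precompactness remark at the end is superfluous --- once $\phi_s$ is shown to be an $\varepsilon_s$-approximation with $\varepsilon_s\to 0$ and the measures converge, the limit is determined without appealing to a compactness theorem.)
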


Now let $S_\infty$ be the 
metric measure space 
appears as the limit in Theorem \ref{main 1} or 
Theorem \ref{main 2} 
and denote by $\Delta_\infty$ 
its Laplacian. 
Denote by 
$W(n+1)$ the eigenspace of 
\begin{align*}
\Delta_\infty\colon (C^\infty(S_\infty)\otimes \C)^\rho
\to (C^\infty(S_\infty)\otimes \C)^\rho
\end{align*}
associate with the eigenvalue $n+1$.
\begin{thm}
If $S_\infty$ be the 
metric measure space 
appears as the limit in Theorem \ref{main 1}, 
then $\dim W(n+1)= 1$ if $m=1$ 
and $\dim W(n+1)= 0$ if $m>1$.  
If $S_\infty$ be the 
metric measure space 
appears as the limit in Theorem \ref{main 2}, 
then $\dim W(n+1)= 0$. 
\label{main 3}
\end{thm}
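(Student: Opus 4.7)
The plan is to identify $(C^\infty(S_\infty)\otimes\C)^\rho$ explicitly in each case via Fourier decomposition along the $S^1$-factor, and then translate the eigenvalue equation $\Delta_\infty f=(n+1)f$ into a concrete problem on the Euclidean base.

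Start with the limit $(\R^n\times S^1, g_{m,\infty}, \mu_\infty)$ from Theorem \ref{main 1}. Transcribing the definition of the $\rho$-isotypic subspace using the given $S^1$-action, an element $f$ lies in the equivariant subspace exactly when
\begin{align*}
f(y, e^{\sqrt{-1}(t+m\tau)}) = e^{\sqrt{-1}\tau} f(y, e^{\sqrt{-1}t})\quad\text{for all }\tau.
\end{align*}
Expanding $f = \sum_k a_k(y) e^{\sqrt{-1}kt}$ and matching Fourier modes forces $km=1$ for every nonvanishing $a_k$, which admits an integer solution only for $m=1$. Hence the equivariant subspace is already trivial whenever $m>1$, yielding $\dim W(n+1)=0$. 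The same principle disposes of Theorem \ref{main 2}: the trivial $S^1$-action on $\R^n$ combined with the $\rho$-twist forces $e^{\sqrt{-1}\tau}f\equiv f$ for all $\tau$, so $f\equiv 0$ and again $\dim W(n+1)=0$.

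In the remaining case $m=1$, the Fourier analysis selects $f(y, e^{\sqrt{-1}t}) = a(y)e^{\sqrt{-1}t}$ with $a\in C^\infty(\R^n)$, and I would compute $\Delta_\infty$ directly. Since $\sqrt{\det g_{1,\infty}} = (1+\|y\|^2)^{-1/2}$, the weight of $\mu_\infty$ against the Riemannian volume is $w=\sqrt{1+\|y\|^2}$, and $w\sqrt{\det g_{1,\infty}}\equiv 1$, so the metric-measure Laplacian collapses to
\begin{align*}
\Delta_\infty f = -(1+\|y\|^2)\del_t^2 f - \sum_{i=1}^n\del_{y_i}^2 f.
\end{align*}
On $f = a(y)e^{\sqrt{-1}t}$, the $t$-derivative produces the multiplication operator $(1+\|y\|^2)$, so $\Delta_\infty f=(n+1)f$ reduces to
\begin{align*}
-\sum_{i=1}^n\del_{y_i}^2 a + \|y\|^2 a = n\,a
\end{align*}
on $\R^n$. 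This is the $n$-dimensional quantum harmonic oscillator eigenproblem, whose $L^2(\R^n)$-spectrum is $\{n+2k\}_{k\in\Z_{\ge 0}}$ with simple lowest eigenvalue $n$ realized by the Gaussian $a(y)=e^{-\|y\|^2/2}$. Because $\|f\|_{L^2(\mu_\infty)}^2 = 2\pi\|a\|_{L^2(\R^n)}^2$, the $L^2$-integrability of $f$ is equivalent to that of $a$, and $\dim W(n+1)=1$ follows.

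The main obstacle is conceptual rather than computational: one must use the metric-measure Laplacian associated to $\mu_\infty$, not the Riemannian Laplacian of $g_{m,\infty}$, because the nontrivial weight is precisely what produces the $(1+\|y\|^2)$ coefficient that, after absorbing the $-1$ eigenvalue of $\del_t^2$ on $e^{\sqrt{-1}t}$, yields the harmonic-oscillator potential $\|y\|^2$. Once the convention for $\Delta_\infty$ is aligned with the one under which $H^0(X_{J_s},L)$ appears at eigenvalue $n+1$ in the prequantum setting, the reduction is forced and the remainder is standard spectral theory.
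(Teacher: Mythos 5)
Your argument is correct and follows essentially the same route as the paper: Fourier decomposition along the $S^1$-factor to isolate the $\rho$-isotypic modes (which vanish when $m>1$, or when the $S^1$-action is trivial as in Theorem~\ref{main 2}), followed by reduction of the eigenvalue equation to a harmonic-oscillator problem on $\R^n$ whose simple ground-state energy $n$ gives $\dim W(n+1)=1$ for $m=1$. The only cosmetic difference is that the paper writes the reduced operator as the Gaussian/Ornstein--Uhlenbeck Laplacian $\Delta_{\R^n,k}$ diagonalized by Hermite polynomials (Theorem~\ref{thm spec}), while you use the unitarily equivalent Schr\"odinger form $-\Delta+\|y\|^2$; both pick out the same one-dimensional Gaussian ground state.
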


This article is organized as follows. 
First of all, we explain 
how to identify the holomorphic sections of 
$L$ on $(X,J)$ with the eigenfunctions 
on the frame bundle $S$ equipped with 
the connection metric in Section 
\ref{sec line bundle} and \ref{principal metric}. 
In Section \ref{sec BS}, 
we review the definition of 
Bohr-Sommerfeld fibers 
for the pairs of symplectic manifolds and 
prequantum line bundles. 
In Section \ref{Polarizations}, 
we review the notion of Polarizations, 
which enables us to treat 
the $\omega$-compatible 
complex structures and the Lagrangian fibrations. 
In Section \ref{topology} we 
explain the notion of the pointed 
$S^1$-equivariant 
measured Gromov-Hausdorff convergence. 
This notion is the special case of the convergence 
introduced by Fukaya and Yamaguchi 
\cite{FukayaYamaguchi1994}.
These sections are the preparations for 
the main argument.
In Section \ref{Convergence}, 
we show the pointed $S^1$-equivariant 
measured 
Gromov-Hausdorff convergence 
near the Bohr-Sommerfeld fibers. 
First of all we obtain the 
local description of the connection 
metric $\hat{g}_s$ on $S$, then discuss the 
condition equivalent to the 
existence of the lower bound of the
Ricci curvatures. 
Then we show the convergence of 
$\hat{g}_s$ to $g_{m,\infty}$ as $s\to 0$. 
In Section \ref{sec others} we consider 
the limit of $\hat{g}_s$ near the non 
Bohr-Sommerfeld fibers, then show that 
the $S^1$-action on the limit space is 
trivial. 
In Section \ref{sec spectral}, 
we study the spectral structure of  
the Laplacian of the metric measure spaces 
appearing as the limit of $\hat{g}_s$.

\section{Holomorphic line bundles}\label{sec line bundle}
Let $(X,J,\omega)$ be 
a compact K\"ahler manifold. 
We write $X=X_J$ when we regard $X$ as 
a complex manifold. 
Let $\pi_E\colon E\to X_J$ be a holomorphic line bundle 
over $X_J$. 
Suppose $h$ is a hermitian metric on $E$ and 
$\nabla\colon \Gamma(E)\to\Omega^1(E)$ is the Chern connection. 
Under the decomposition $\Omega^1=\Omega^{1,0}\oplus \Omega^{0,1}$, 
we have the decomposition 
$\nabla=\nabla^{1,0}+\nabla^{0,1}$. 
Let $\nabla^*,(\nabla^{1,0})^*,(\nabla^{0,1})^*$ are the formal adjoint 
of $\nabla,\nabla^{1,0},\nabla^{0,1}$, respectively.

For a holomorphic coordinate $(U,z^1,\ldots,z^n)$ on $X_J$, 
put $\omega=\sqrt{-1}g_{i\bar{j}}dz^i\wedge d\bar{z}^j$. 
Then we may write 
\begin{align*}
\nabla^* &= (\nabla^{1,0})^* + (\nabla^{0,1})^*,\\
(\nabla^{1,0})^* &= -g^{i\bar{j}}\iota_{\partial_i}\nabla_{\bar{\partial}_j},\\
(\nabla^{0,1})^* &= -g^{i\bar{j}}\iota_{\bar{\partial}_j}\nabla_{\partial_i},
\end{align*}
where $\partial_i:=\frac{\partial}{\partial z^i}$. 
Let $F\in\Omega^{1,1}(X_J)$ 
be the curvature form. 
Since we have 
\begin{align*}
(\nabla^{1,0})^*\nabla^{1,0}s
&= (\nabla^{0,1})^*\nabla^{0,1}s
+ g^{i\bar{j}} F(\partial_i, \bar{\partial}_j)s,
\end{align*}
we obtain 
\begin{align*}
\nabla^*\nabla &= 2\Delta_{\bar{\partial}} + \Lambda_\omega F,\\
\Delta_{\bar{\partial}} &:= (\nabla^{0,1})^*\nabla^{0,1},\\
\Lambda_\omega F &:= g^{i\bar{j}} F(\partial_i, \bar{\partial}_j)\in C^\infty(X). 
\end{align*}

Let $L\to X_J$ be a holomorphic 
line bundle with hermitian metric $h$ and 
hermitian connection $\nabla$ such that the curvature 
form is equal to $-\sqrt{-1}\omega$, 
and put $E=L^k$. 
Then for the connection on $E$ determined by $\nabla$ 
we have $F = -k\sqrt{-1}\omega$, 
then 
\begin{align*}
\Lambda_\omega F 
= nk. 
\end{align*}

Now, put 
\begin{align*}
H^0(X_J,L^k):=\left\{ s\in C^\infty(L^k);\, 
\nabla^{0,1} s= 0\right\}.
\end{align*}
Since $X$ is compact, we can see 
\begin{align*}
H^0(X_J,L^k)&= \left\{ s\in C^\infty(L^k);\, 
\nabla^*\nabla s= nk s\right\}.
\end{align*}

\section{Holomorphic sections on line bundles and eigenfunctions on 
frame bundle}\label{principal metric}
Let $(X,\omega)$ be a connected 
symplectic manifold of dimension $2n$ and 
$(\pi\colon L\to X,\nabla,h)$ be a prequantum line bundle 
over $(X,\omega)$, that is, a complex line bundle with a hermitian metric $h$ 
a connection $\nabla$ preserving $h$ whose curvature form is equal to $-\sqrt{-1}\omega$.

The complex structure $J$ on $X$ is 
{\it $\omega$-compatible }if 
$\omega(J\cdot,J\cdot)=\omega$ holds and 
$g_J:=\omega(\cdot,J\cdot)$ is positive definite. 
If $J$ is $\omega$-compatible,  
then $\omega$ is a K\"ahler form on $X_J$. 

Since $\omega$ is of type $(1,1)$, 
$\nabla$ determines a holomorphic structure 
on $L$, 
consequently 
$\nabla$ is the Chern connection determined by 
$h$ and $J$.

By the previous section we have 
$\nabla^*\nabla=2\Delta_{\bar{\partial}} + n$. 
Put 
\begin{align*}
S:=S(L,h):=\{ u\in L;\, |u|_h=1\}, 
\end{align*}
which is a principal $S^1$-bundle over $X$ 
equipped with the $S^1$-connection 
$\sqrt{-1}\Gamma\in\Omega^1(S,\sqrt{-1}\R)$ 
corresponding to $\nabla$. 
The $S^1$-connection induces the following decomposition 
\begin{align*}
T_uS &:= H_u\oplus V_u,\\
H_u &:= {\rm Ker}\,(\Gamma_u\colon T_uS \to \R),\\
V_u &:= \{ \xi^\sharp_u\in T_uS;\, \xi\in \sqrt{-1}\R\},
\end{align*}
where $\xi^\sharp_u:=\frac{d}{dt}e^{t\xi}|_{t=0}$. 
Then the connection metric $\hat{g}=\hat{g}(L,J,h,\sigma,\nabla)$ on $S$ 
is defined by 
\begin{align*}
\hat{g}(L,J,h,\sigma,\nabla)
:= \sigma \cdot \Gamma^2 
+ (d\pi|_H)^* g_J
\end{align*}
for $\sigma>0$. 

\begin{rem}
\normalfont
By regarding $-\Gamma$ as a contact structure and 
$-{\sqrt{-1}}^\sharp$ as the Reeb vector field, 
$(S,\hat{g}(L,J,h,2,\nabla))$ becomes a Sasakian manifold. 
\end{rem}

Now we can recover $L$ by $S$ as the associate bundle as follows. 
Let $\rho_k\colon S^1\to GL_1(\C)$ be defined by 
$\rho_k(\lambda)=\lambda^k$ for $k\in\Z$, then 
we have the identification $L^k \cong S\times_{\rho_k}\C$. 
Then there are natural isomorphisms 
\begin{align*}
C^\infty(X,L^k) \cong (C^\infty(S)\otimes \C)^{\rho_k},
\end{align*}
where the action of $S^1$ on $C^\infty(S)\otimes \C$ is defined by 
$(\lambda\cdot f)(u):=\lambda^{k}f(u\lambda)$. 
By applying the argument in the previous section for $E=L^k$ we 
have 
$\nabla^*\nabla=2\Delta_{\bar{\partial}}+kn$. 
Note that we may regard $\nabla^*\nabla$ and $\Delta_{\bar{\partial}}$ 
as operators acting on $(C^\infty(S)\otimes \C)^{\rho_k}$, 
then by \cite[Section $3$]{Kasue2011} 
we have 
$\nabla^*\nabla=\Delta_{\hat{g}} - \frac{k^2}{\sigma}$, therefore we obtain 
\begin{align*}
2\Delta_{\bar{\partial}} = \Delta_{\hat{g}} - \left(\frac{k^2}{\sigma} + kn\right)
\colon (C^\infty(S)\otimes \C)^{\rho_k}\to (C^\infty(S)\otimes \C)^{\rho_k}.
\end{align*}

On some open set $U\subset X$, 
suppose that $L|_U$ is trivial as 
$C^\infty$ complex bundles, 
then there exists a global smooth section 
$E\in C^\infty(U,L)$ such that $h(E,E)\equiv 1$. 
Let $\gamma\in \Omega^1(U,\R)$ be defined 
by $\nabla E=\sqrt{-1}\gamma\otimes E$. 
Under the diffeomorphism 
$U\times S^1 \to S(L|_U,h)$ defined by 
$(z,e^{\sqrt{-1}t})\mapsto e^{\sqrt{-1}t}E_{z}$, 
one can obtain the following identification 
as Riemannian manifolds with isometric 
$S^1$-action; 
\begin{align}
(S|_U,\hat{g})\cong
(U\times S^1, g_J|_U + \sigma(dt+\gamma)^2 ).
\label{identification 1}
\end{align}

\section{Bohr-Sommerfeld fibers}\label{sec BS}
Let 
$(\pi\colon L\to X,\nabla)$ be a prequantum line bundle 
over a symplectic manifold $(X,\omega)$. 
A Lagrangian fibration over $(X,\omega)$ is a smooth map 
$\mu\colon X\to B$, where $B$ is a smooth manifold of dimension 
$\frac{{\rm dim}X}{2}$, such that $X_b:=\mu^{-1}(b)$ is a Lagrangian submanifold 
for every $b\in B\setminus B_{\rm sing}$ and $B\setminus B_{\rm sing}$ 
is open dense in $B$. 
We suppose that $B$ and all of the fibers $X_b$ are path-connected. 
Then every $X_b$ is diffeomorphic to a compact torus by 
Liouville-Arnold theorem. 

For a subset $Y\subset X$, 
the holonomy ${\rm Hol}(L|_Y,\nabla)$ is defined by 
\begin{align*}
{\rm Hol}(L|_Y,\nabla) := \{ e^{\sqrt{-1}t}\in S^1;\, \tilde{c}(1)=\tilde{c}(0)e^{\sqrt{-1}t},\, 
c\in \mathcal{P}(a)\}, 
\end{align*}
where $\mathcal{P}(a)$ consists of piecewise smooth curve 
$c\colon [0,1]\to X$ with $c(0)=c(1)=a\in Y$, ${\rm Im}(c)\subset Y$ and 
$\tilde{c}$ is the horizontal lift of $c$. 
Note that ${\rm Hol}(L|_Y,\nabla)$ does not depend on $a\in Y$ 
if $Y$ is path-connected.

\begin{definition}
\normalfont
\mbox{}
\begin{itemize}
\setlength{\parskip}{0cm}
\setlength{\itemsep}{0cm}
 \item[$({\rm i})$] $X_b$ is a {\it Bohr-Sommerfeld fiber of $\mu\colon X\to B$} if 
${\rm Hol}(L|_{X_b},\nabla)$ is trivial. 
 \item[$({\rm ii})$] $X_b$ is an {\it $m$-BS fiber of $\mu\colon X\to B$} if 
${\rm Hol}(L|_{X_b},\nabla)$ is a subgroup of $\Z/m\Z$. 
$X_b$ is a {\it strict $m$-BS fiber of $\mu\colon X\to B$} if 
${\rm Hol}(L|_{X_b},\nabla)\cong\Z/m\Z$. 
\end{itemize}
\end{definition}

\begin{rem}
\normalfont
$X_b$ is a $m$-BS fiber of $\mu\colon X\to B$ iff 
${\rm Hol}(L^m|_{X_b},\nabla)$ is trivial.
\end{rem}

\begin{rem}
\normalfont
In this article we suppose that 
\begin{align*}
B_m:=\{ b\in B;\, X_b\mbox{ is an }m\mbox{-BS fiber}\}
\end{align*}
are discrete in $B$ for all $m>0$. 
This condition always holds if $\mu\colon X\to B$ comes from 
completely integrable system or $B_{\rm sing}=\emptyset$ by 
the results in \cite{Tyurin2002}\cite{Tyurin2007}. 
If we put 
\begin{align*}
B_m':=\{ b\in B;\, X_b\mbox{ is a strict }m\mbox{-BS fiber}\},
\end{align*}
then $B_m=\bigsqcup_{l|m}B_l'$ holds. 
\end{rem}

\section{Polarizations}\label{Polarizations}
In this section 
we review the notion of polarizations in the 
sense of \cite{Woodhouse1992} 
to treat complex structures 
and Lagrangian fibrations 
uniformly. 

Let $V_\R$ be a real vector space 
of dimension $2n$ with 
symplectic form 
$\alpha\in \bigwedge^2 V^*$ and 
put $V=V_\R\otimes \C$. 
Then $\alpha$ extends $\C$-linearly to 
a complex symplectic form on $V$. 
{\it A Lagrangian subspace} $W${\it of} $V$ 
is a complex vector subspace of $V$ 
such that $\dim_\C W=n$ and 
$\alpha(u,v)=0$ for all $u,v\in W$. 
Put 
\begin{align*}
{\rm Lag}(V,\alpha)
:=\left\{ W\subset V;\, 
W\mbox{ is a Lagrangian subspace}
\right\},
\end{align*}
which is a submanifold of 
Grassmannian ${\rm Gr}(n,V)$. 

For a symplectic manifold $(X,\omega)$, 
put 
\begin{align*}
{\rm Lag}_\omega
:=\bigsqcup_{x\in X} {\rm Lag}(T_x X\otimes \C,\omega_x).
\end{align*}
This is a fiber bundle over $X$, and a section 
$\mathcal{P}$ of ${\rm Lag}_\omega$ 
is a subbundle of $TX\otimes \C$. 
$\mathcal{P}$ is said to be {\it integrable} 
if 
\begin{align*}
[\Gamma(\mathcal{P}|_U),\Gamma(\mathcal{P}|_U)]
\subset \Gamma(\mathcal{P}|_U)
\end{align*}
holds for any open set $U\subset X$, 
and we call such $\mathcal{P}$ a
{\it polarization of} $X$. 
In this article we consider the following two types of 
polarizations. 
\vspace{0.2cm}

\paragraph{\bf K\"ahler polarizations}
Let $J$ be an $\omega$-compatible 
complex structure. 
The subbundle 
\begin{align*}
\mathcal{P}_J:=T^{1,0}_JX \subset TX\otimes \C
\end{align*}
is called a K\"ahler polarization. 
\vspace{0.2cm}

\paragraph{\bf Real polarizations}
Let $Y$ be a
smooth manifold of dimension $n$, $\mu\colon X\to Y$ 
be a smooth map such that all $b\in \mu(X)$ are 
regular values and 
$\mu^{-1}(b)$ are Lagrangian submanifolds. 
Then 
\begin{align*}
\mathcal{P}_\mu :={\rm Ker}(d\mu)\otimes \C
\subset TX\otimes \C
\end{align*}
is called a real polarization. 
\vspace{0.2cm}

Define $l\colon {\rm Lag}(V,\alpha)\to \{ 0,1,\ldots,n\}$ 
by $l(W):=\dim_\C(W\cap \overline{W})$. 
Then for any K\"ahler polarization $\mathcal{P}_J$ we have 
$l((\mathcal{P}_J)_x)=0$, and for
any real polarization $\mathcal{P}_\mu$ we have 
$l((\mathcal{P}_\mu)_x)=n$. 

Conversely, for a polarization $\mathcal{P}$ 
such that $l(\mathcal{P}_x)=0$ for all $x\in X$, there is a 
unique complex structure $J$ 
such that $\omega(J\cdot,J\cdot)=\omega$ 
and $\mathcal{P}=T^{1,0}_JX$. 
For a polarization $\mathcal{P}$ 
such that $l(\mathcal{P}_x)=n$ for all $x\in X$, 
we obtain the Lagrangian foliation. 

Next we observe the local structure 
of ${\rm Lag}(V,\alpha)$. 
For $W\in {\rm Lag}(V,\alpha)$, 
we can take a basis $\{ w_1,\ldots,w_n\}\subset W$ 
and vectors $u^1,\ldots,u^n\in V$ 
such that 
$\{ w_1,\ldots,w_n,u^1,\ldots,u^n\}$ is a basis of $V$ and 
\begin{align*}
\alpha(w_i,w_j)=\alpha(u^i,u^j)=0,
\quad \alpha(u^i,w_j)=\delta_j^i
\end{align*}
hold. 
Put $W':={\rm span}_\C\{ u^1,\ldots,u^n\}$
and take $A\in {\rm Hom}(W,W')$. 
Then the subspace 
\begin{align*}
W_A:=\left\{ w+Aw\in V;\, w\in W\right\}
\end{align*}
is Lagrangian iff the matrix $( A_{ij})$ defined by 
$Aw_i=A_{ij}u^j$ is symmetric. 
Consequently, we have the identification 
\begin{align}
T_W {\rm Lag}(V,\alpha)
= \left\{ A\in {\rm Hom}(W,W');\, A_{ij}=A_{ji}\right\}.
\label{tangent}
\end{align}
Now, we fix $W$ such that $l(W)=n$. 
Then $w_1,\ldots,w_n,u^1,\ldots,u^n$ 
can be taken to be real vectors, hence 
\begin{align*}
l(W_A) =\dim{\rm Ker}(A-\overline{A})= n-{\rm rank}(A-\overline{A})
\end{align*}
holds. 
Moreover $W_A$ comes form an 
almost complex structure which makes $\alpha$ the positive hermitian 
iff 
${\rm Im}A\in M_n(\R)$ is the positive definite symmetric matrix. 
We define 
\begin{align*}
T_W {\rm Lag}(V,\alpha)_+
:= \left\{ A\in {\rm Hom}(W,W');\, A_{ij}=A_{ji},\, {\rm Im}A>0\right\}
\end{align*}
under the identification \eqref{tangent}.
If $W_t$ is a smooth curve in 
${\rm Lag}(V,\alpha)$ such that 
$l(W_0)=n$ and 
$\frac{d}{dt}W_t|_{t=0}\in T_{W_0} {\rm Lag}(V,\alpha)_+$, 
then there is $\delta>0$ such that 
$l(W_t)=0$ and $\alpha(w,\bar{w})>0$ for 
any $w\in W_t\setminus \{ 0\}$ and 
$0<t\le \delta$. 
Conversely, 
even 
if $W_t$ satisfies $l(W_0)=n$ and 
\begin{align*}
l(W_t)=0,\quad 
\alpha(w,\bar{w})>0 \mbox{ for 
any }w\in W_t\setminus \{ 0\}
\end{align*}
for all $t>0$, 
$\frac{d}{dt}W_t|_{t=0}$ is not 
necessary to be in 
$T_{W_0} {\rm Lag}(V,\alpha)_+$ 
since the closure of positive definite 
symmetric matrices contains semi-positive 
definite symmetric matrices.

\section{Topology}\label{topology}
In this section we explain 
the notion of the 
$S^1$-equivariant 
measured Gromov-Hausdorff topology.  
The following notion 
is the special case of 
\cite[Definition 4.1]{FukayaYamaguchi1994}. 
\begin{definition}
\normalfont 

Let $G$ be a compact topological group. 
\begin{itemize}
\setlength{\parskip}{0cm}
\setlength{\itemsep}{0cm}
 \item[(1)] 
Let $(P',d')$ and $(P,d)$ be metric spaces with 
isometric $G$-action. 
A map $\phi:P'\to P$ is an {\it $G$-equivariant 
$\varepsilon$-approximation} if $\phi$ is 
$G$-equivariant and $\varepsilon$-approximation. 
Here, $\varepsilon$-approximation means that 
$|d'(x',y') - d(\phi(x'),\phi(y'))| < \varepsilon$ 
holds for all $x',y'\in P'$ and 
$P\subset B(\phi(P'),\varepsilon)$. 
Moreover if $\phi$ is a Borel map then 
it is called a {\it Borel $G$-equivariant 
$\varepsilon$-approximation}. 
 \item[(2)] Let $\{(P_i,d_i,\nu_i,p_i)\}_i$ be a sequence of pointed 
metric measure spaces with isometric $G$-action. 
$(P_\infty,d_\infty,\nu_\infty,p_\infty)$ is said to be 
{\it the pointed $G$-equivariant measured Gromov-Hausdorff limit of 
$\{(P_i,d_i,\nu_i,p_i)\}_i$} 
if $G$ acts on $P_\infty$ isometrically and 
for any $R>0$ there are positive numbers 
$\{ \varepsilon_i\}_i$, $\{ R_i\}_i$ with 
\begin{align*}
\lim_{i\to \infty}\varepsilon_i = 0,\quad \lim_{i\to \infty}R_i=R,
\end{align*}
and Borel $G$-equivariant 
$\varepsilon_i$-approximation 
\begin{align*}
\phi_i\colon (\pi_i^{-1}(B(x_i,R_i)),p_i)\to (\pi_\infty^{-1}(B(x_\infty,R)),p_\infty)
\end{align*}
for every $i$ 
such that ${\phi_i}_*(\nu_i|_{\pi_i^{-1}(B(x_i,R_i))} )
\to \nu_\infty|_{\pi_\infty^{-1}(B(x_\infty,R))}$ 
vaguely. 
Here, $\pi\colon P_i\to P_i/G$ is the quotient map and 
$x_i=\pi_i(p_i)$. 
\end{itemize}
\label{def GmGH}
\end{definition}

\section{Convergence}\label{Convergence}
Throughout of this section 
let $(X^{2n},\omega)$ be a symplectic manifold, 
$Y^n$ a smooth manifold 
and 
\begin{align*}
\mu\colon X\to Y
\end{align*}
be a smooth surjective map such that $\mu^{-1}(y)$ are smooth 
compact connected Lagrangian submanifolds 
for all regular value $y\in Y$. 
Assume that $y_0\in Y$ is 
a regular value of $\mu$. 
Then by \cite{ArnoldAvez1968french}\cite{Duistermaat1980}\cite{MarkusMeyer1974}, 
there are open neighborhoods 
$U\subset X$ of $X_0 := \mu^{-1}(y_0)$, 
$B'\subset Y$ of $y_0$, 
$B\subset \R^n$ of the origin $0$, 
diffeomorphisms $\tilde{f}\colon B\times T^n \stackrel{\cong}{\to} U$ and 
$f\colon B'\stackrel{\cong}{\to} B$ such that 
$\tilde{f}^*\omega = \sum_{i=1}^n dx_i\wedge d\theta^i,$ and $f(y_0) = 0$, 
where $x=(x_1,\ldots,x_n)=f\circ\mu\circ \tilde{f}$ 
and $\theta=(\theta^1\ldots,\theta^n)\in T^n=\R^n/\Z^n$. 
Therefore, we may suppose 
\begin{align*}
U&=B\times T^n,\quad \mu=(x_1,\ldots,x_n),\quad
\omega = dx_i\wedge d\theta^i,\\
B&=\left\{ x=(x_1,\ldots,x_n)\in\R^n;\, \| x\|
=\sqrt{x_1^2+\cdots + x_n^2}<R\right\},\\
X_0&=\{ 0\}\times T^n
\end{align*}
for some $0<R\le 1$. 

Let $(L,\nabla)$ be the prequantum line bundle on 
$(X,\omega)$ and $h$ be a hermitian metric such that $\nabla h=0$. 
Since $[\omega|_{U}] = 0\in H^2(U)$, 
then the 1st Chern class of $(L,\nabla)|_{U}$ vanishes, 
hence $L|_{U}$ is trivial as $C^\infty$ complex line bundle 
by \cite[Section $5$]{Chern1995}.

From now on we consider some 
covering spaces of $U$ given by the followings. 
Let $\Phi\colon\Z^n\to \Z/m\Z$ 
be a homomorphism of $\Z$-modules. 
Then ${\rm Ker}\,\Phi$ is of 
rank $n$, hence $\R^n/{\rm Ker}\,\Phi$ is 
diffeomorphic to the $n$-dimensional torus. 
Now we have the natural projection 
\[ 
\left.
\begin{array}{ccc}
\R^n/{\rm Ker}\,\Phi & \rightarrow & T^n \\
\rotatebox{90}{$\in$} & & \rotatebox{90}{$\in$} \\
\theta\, {\rm mod}\,{\rm Ker}\,\Phi & \mapsto & \theta\, {\rm mod}\,\Z^n
\end{array}
\right.
\]
which give a covering space and 
a covering map 
\begin{align*}
U_\Phi:=B\times \left( 
\R^n/{\rm Ker}\,\Phi\right),\quad 
p_\Phi\colon U_\Phi\to U. 
\end{align*}
From now on we denote by 
$\theta$ the element of $\R^n/{\rm Ker}\,\Phi $ 
or $T^n$ for the simplicity, 
if there is no fear of confusion. 
If we take $\mathbf{w}\in\Z^n$ 
then 
\[ 
\left.
\begin{array}{cccc}
\beta(\Phi(\mathbf{w})):
& U_\Phi & \rightarrow & U_\Phi \\
&\rotatebox{90}{$\in$} & & \rotatebox{90}{$\in$} \\
&(x,\theta) & \mapsto & (x,\theta+\mathbf{w})
\end{array}
\right.
\]
gives the action of ${\rm Im}\,\Phi$ 
on $U_\Phi$, which is 
the deck transformations of $p_\Phi$.

\begin{prop}
Let $X_0$ be a strict $m$-BS fiber. 
Then there are surjective 
homomorphism $\Phi\colon\Z^n\to\Z/m\Z$ 
and
$E\in C^\infty(p_\Phi^*L)$ such that 
$h(E,E)\equiv 1$ and 
$\nabla E = -\sqrt{-1}x_id\theta^i\otimes E$. 
Moreover, 
the deck transformations of $p_\Phi$ 
satisfies 
$\beta(k)^*E = e^{\frac{2k\sqrt{-1}\pi}{m}}E$ 
for $k\in\Z/ m\Z$. 
\label{normalize of connection}
\end{prop}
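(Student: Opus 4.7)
The plan is to find, on the finite cover $U_\Phi$, a unit-norm section $E$ with the required connection form; this reduces to choosing $U_\Phi$ so that a certain closed $1$-form on it has periods in $2\pi\Z$, then integrating. The cover will be determined precisely by the holonomy character of $\nabla$ along the fiber $X_0$.

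First I would trivialize: since $L|_U$ is smoothly trivial, pick $E_0 \in C^\infty(U, L)$ with $h(E_0, E_0) \equiv 1$ and write $\nabla E_0 = \sqrt{-1}\gamma \otimes E_0$ for a real $1$-form $\gamma \in \Omega^1(U, \R)$; the curvature identity $F^\nabla = -\sqrt{-1}\omega$ forces $d\gamma = -\omega$. Next, define the holonomy character $\chi \colon \Z^n \to S^1$ by $\chi(\mathbf{w}) := \exp\bigl(-\sqrt{-1}\int_{c_\mathbf{w}}\gamma|_{X_0}\bigr)$, where $c_\mathbf{w}(t) := (0, t\mathbf{w})$. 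This is a group homomorphism whose image equals ${\rm Hol}(L|_{X_0}, \nabla)$, which by the strict $m$-BS hypothesis is the cyclic group of $m$-th roots of unity in $S^1$. I would then take $\Phi \colon \Z^n \to \Z/m\Z$ to be the unique surjective homomorphism characterized by $\chi(\mathbf{w}) = e^{2\pi\sqrt{-1}\Phi(\mathbf{w})/m}$ for all $\mathbf{w}$.

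On $U_\Phi = B \times (\R^n/{\rm Ker}\,\Phi)$, consider the $1$-form $\alpha := -x_i d\theta^i - p_\Phi^*\gamma$, which is closed since $d\gamma = -\omega = -dx_i \wedge d\theta^i$. Because $B$ is contractible and $\R^n/{\rm Ker}\,\Phi$ is a torus with fundamental group ${\rm Ker}\,\Phi$, every class in $\pi_1(U_\Phi)$ is represented by a loop $t \mapsto (0, t\mathbf{w})$ with $\mathbf{w} \in {\rm Ker}\,\Phi$; the period of $\alpha$ over such a loop equals $-\int_{c_\mathbf{w}}\gamma|_{X_0}$, which lies in $2\pi\Z$ precisely because $\chi(\mathbf{w}) = 1$. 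Hence there exists a smooth map $e^{\sqrt{-1}\phi} \colon U_\Phi \to S^1$ with $d\phi = \alpha$, and $E := e^{\sqrt{-1}\phi}\, p_\Phi^* E_0$ satisfies $h(E, E) \equiv 1$ together with $\nabla E = \sqrt{-1}(d\phi + p_\Phi^*\gamma) \otimes E = -\sqrt{-1} x_i d\theta^i \otimes E$.

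For the deck-transformation identity, observe that $\alpha$ is invariant under each $\beta = \beta(\Phi(\mathbf{w}))$: this shifts $\theta$ by $\mathbf{w}$, fixing $-x_i d\theta^i$, and satisfies $p_\Phi \circ \beta = p_\Phi$. Thus $\beta^*\phi - \phi$ is locally constant, and evaluating along the path $t \mapsto (0, t\mathbf{w})$ joining $(0,0)$ to $(0, \mathbf{w})$ in $U_\Phi$ yields $\beta^*\phi(0,0) - \phi(0,0) = -\int_{c_\mathbf{w}}\gamma|_{X_0} \equiv 2\pi\Phi(\mathbf{w})/m \pmod{2\pi\Z}$, whence $\beta(k)^*E = e^{2k\sqrt{-1}\pi/m}E$. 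The only real conceptual step, and the mild obstacle, is extracting the correct $\Phi$ from the holonomy character; once the strict $m$-BS hypothesis pins it down, the remaining assertions reduce to standard de Rham/period computations on the lifted torus.
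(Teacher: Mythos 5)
Your proposal is correct, and it takes a genuinely different route from the paper's own proof. The paper starts from the strict $m$-BS hypothesis by taking a flat section $\hat{E}$ of $L^m|_{X_0}$, extends it to $U$, normalizes the connection form to $-m\sqrt{-1}x_i d\theta^i$ via an exactness argument (using that $\iota^*\gamma=0$ and $\iota^*\colon H^1(U)\to H^1(X_0)$ is an isomorphism), and only then takes an $m$-th root of $\hat{E}$ on the universal cover; the homomorphism $\Phi$ emerges \emph{a posteriori} as the obstruction to the $m$-th root descending, and its surjectivity is inferred at the end. You instead build $\Phi$ \emph{first} as the holonomy character of $\nabla|_{X_0}$ (using the strict $m$-BS hypothesis to identify its image with $\mu_m\cong\Z/m\Z$), then observe that the closed defect form $\alpha=-x_i d\theta^i - p_\Phi^*\gamma$ acquires $2\pi\Z$-periods precisely on the cover $U_\Phi$ attached to $\ker\Phi$, integrate it to a circle-valued gauge transformation, and read off the deck-transformation identity directly from the period of $\alpha$ along the lift of $c_{\mathbf{w}}$ — no passage through $L^m$ or its roots is needed. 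Both arguments rest on the same topological inputs ($\omega|_{X_0}=0$, so $\gamma|_{X_0}$ is closed and holonomy is homotopy-invariant; $H^1(U)$ is controlled by $H^1(X_0)$), but your version makes the origin of $\Phi$ and the covering transparent, while the paper's version produces the auxiliary section $\hat{E}$ of $L^m$ in passing. One small thing worth making explicit in your write-up: that $\chi$ is well-defined on $\pi_1(X_0)=\Z^n$ (not just on specific representatives) because $\gamma|_{X_0}$ is closed, which in turn comes from $d\gamma=-\omega$ restricted to the Lagrangian $X_0$; you use this silently when identifying ${\rm Im}\,\chi$ with ${\rm Hol}(L|_{X_0},\nabla)$.
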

\begin{proof}
Since $X_0$ is the $m$-BS fiber, one can obtain 
the flat section $\hat{E}$ of $(L^m|_{U})|_{x=0})$ such that 
$h^{\otimes m}(\hat{E},\hat{E})\equiv 1$. 
Then $\hat{E}$ can be extended to the nowhere vanishing 
section of $C^\infty(L^m|_{U})$ with $h^{\otimes m}(\hat{E},\hat{E})\equiv 1$. 
Define $\gamma\in\Omega^1(U)$ by 
$\nabla\hat{E}=\sqrt{-1}\gamma\otimes \hat{E}$. 
By computing the curvature form of 
$\nabla$ one obtain 
$d\gamma = -m\omega|_{U} = -mdx_i\wedge d\theta^i$ which implies that 
$\gamma+mx_id\theta^i$ is a closed $1$-form 
on $U$. 
Denote by $\alpha$ the cohomology class represented by 
$\gamma+mx_id\theta^i$ and 
let $\iota\colon \{ 0\}\times T^n \to U$ be the 
natural embedding. 
Since $\hat{E}|_{x=0}$ is flat, then one can see that 
$\iota^*\gamma=0$ and $\iota^*\alpha=0$. 
Since $\iota^*\colon H^1(B\times T^n) \to H^1(\{ 0\}\times T^n)$ 
is isomorphic, one can see that $\alpha=0$, 
therefore there exists $\tau\in C^\infty(U,\R)$ such that 
$\gamma+mx_id\theta^i = d\tau$. 

Then one have 
\begin{align*}
\nabla( e^{-\sqrt{-1}\tau}\hat{E})
= \sqrt{-1}(-d\tau + \gamma) \otimes e^{-\sqrt{-1}\tau}\hat{E}
= -m\sqrt{-1}x_id\theta^i \otimes e^{-\sqrt{-1}\tau}\hat{E},
\end{align*}
accordingly, by replacing 
$e^{-\sqrt{-1}\tau}\hat{E}$ by $\hat{E}$, 
we may suppose  
\begin{align*}
\nabla \hat{E}=-m\sqrt{-1}x_id\theta^i 
\otimes \hat{E}.
\end{align*}

Let $\tilde{p}\colon 
\tilde{U}=B\times \R^n\to B\times T^n$ 
be the universal cover of $U$. 
Then there is a nowhere vanishing section 
$E\in C^\infty(\tilde{p}^*L)$ such that 
$E^{\otimes m}=\tilde{p}^*\hat{E}$, 
consequently we obtain 
the homomorphism $\Phi\colon
\pi_1(U)=\Z^n \to \Z/m\Z$ defiend by 
\begin{align*}
E_{(x,\theta+\mathbf{k})} = 
e^{2\pi\sqrt{-1}\Phi(\mathbf{k})} E_{(x,\theta)}
\end{align*}
for $\mathbf{k}\in \Z^n$. 
Therefore, $E$ descends to 
the section of $p_\Phi^*L$, 
then 
\begin{align*}
\nabla E=-\sqrt{-1}x_id\theta^i
\otimes E
\end{align*}
holds. 
Since $X_0$ is the strict $m$-BS fiber, 
$\Phi$ is surjective and $p_\Phi$ is an 
$m$-fold covering. 
\end{proof}

\subsection{Local description of the 
complex structures and the metrics}

We assume that an $\omega$-compatible 
complex structure $J$ on $X$ is given such that 
$\mathcal{P}_J|_U$ is close to 
$\mathcal{P}_\mu|_U$, 
as sections of ${\rm Lag}_\omega|_U\to U$. 
Define $\mathcal{P}_\mu'$ by 
\begin{align*}
(\mathcal{P}_\mu')_p:=
{\rm span}_\C\left\{ \left( 
\frac{\partial}{\partial x_1}\right)_p, 
\ldots,
\left( \frac{\partial}{\partial x_n}\right)_p
\right\} \subset T_pU\otimes\C, 
\end{align*}
then we have the direct decomposition 
$TU\otimes \C = \mathcal{P}_\mu \oplus \mathcal{P}_\mu'$. 
Since $\mathcal{P}_J|_U$ is close to 
$\mathcal{P}_\mu|_U$, 
the identification \eqref{tangent} gives 
\begin{align*}
A=\left( A_{ij}(x,\theta)\right)_{i,j}\in 
C^\infty(U)\otimes M_n(\C)
\end{align*}
such that 
\begin{align*}
A_{ij}=A_{ji},\quad 
{\rm Im}A>0
\end{align*}
and 
\begin{align*}
\frac{\partial}{\partial\theta^i}
+A_{ij}(x,\theta) \frac{\partial}{\partial x_j},\quad 
i=1,\ldots,n
\end{align*}
is a frame of $\mathcal{P}_J|_U$.
Moreover the integrability of $J$ gives 
\begin{align}
\frac{\partial A_{jk}}{\partial\theta^i} 
- \frac{\partial A_{ik}}{\partial\theta^j} 
+A_{il}\frac{\partial A_{jk}}{\partial x_l} 
-A_{jl}\frac{\partial A_{ik}}{\partial x_l} = 0.
\label{integrable}
\end{align}

Conversely, if a complex matrix valued function 
$A$ 
satisfies above properties then we can recover 
$J|_U$. 
Therefore, the $\omega$-compatible $J$
complex structure close to $\mathcal{P}_\mu$ 
is identified with the matrix valued function 
$A$ on $U$. 

If we put $A_{ij}=P_{ij}+\sqrt{-1}Q_{ij}$, where $P_{ij},Q_{ij}\in\R$, 
and denote by $(Q^{ij})$ the inverse of $(Q_{ij})$, 
then one can see 
\begin{align}
J\left( \frac{\partial}{\partial\theta^i}  \right)
&= -P_{ij}Q^{jk}\frac{\partial}{\partial\theta^k} 
-(Q_{ik} + P_{ij}Q^{jl}P_{lk})\frac{\partial}{\partial x_k},\label{almost cpx1}\\
J\left( \frac{\partial}{\partial x_i}  \right)
&= Q^{ik}\frac{\partial}{\partial\theta^k} 
+Q^{ij}P_{jk}\frac{\partial}{\partial x_k},\label{almost cpx2}\\
Jd\theta^k
&= -P_{ij}Q^{jk}d\theta^i +Q^{ik}dx_i,\label{almost cpx3}\\
Jdx_k
&= -(Q_{ik} + P_{ij}Q^{jl}P_{lk})d\theta^i +Q^{ij}P_{jk}dx_i,\label{almost cpx4}
\end{align}
therefore we obtain
\begin{align*}
g_J|_U &= 
g_A:= (Q_{ij} + P_{ik}Q^{kl}P_{lj}) d\theta^i d\theta^j
-2 P_{ik}Q^{jk} d\theta^i dx_j
+ Q^{ij} dx_i dx_j.
\end{align*}

Denote by $d_g$ the Riemannian distance of 
a Riemannian metric $g$. 
Then $g_J|_U = g_A$, $d_{g_J}|_U \le d_{g_A}$ 
always holds, however, 
the opposite inequality does not hold in general 
since the shortest path connecting 
two points in $U$ need not be included in $U$. 
Here we consider the 
lower estimate of $d_{g_J}$ 
and the upper estimate of $d_{g_A}$.

For a real symmetric positive definite 
matrix valued function 
$S(x,\theta)=(S_{ij}(x,\theta))_{i,j}$ depending on 
$(x,\theta)\in U$ continuously, 
let $\lambda_1(x,\theta),\ldots,\lambda_n(x,\theta)$ 
be the eigenvalues of $S(x,\theta)$. 
Define 
\begin{align*}
U_r &:= \{ (x,\theta) \in\R^n\times T^n;\, \| x\|<r\}
\subset U\quad (r\le R),\\
\sup S &:= \sup_{i,(x,\theta)
\in U_{\frac{R}{2}}}
\lambda_i(x,\theta),
\quad
\inf S := \inf_{i,(x,\theta)
\in U_{\frac{R}{2}}}
\lambda_i(x,\theta).
\end{align*}
Since $\overline{U_{\frac{R}{2}}}$ is compact, 
$0<\inf S\le\sup S<\infty$ holds.

\begin{prop}
Put 
\begin{align*}
\Theta:=Q+PQ^{-1}P
\end{align*}
for $A=P+\sqrt{-1}Q$. 
The following inequalities 
\begin{align*}
\sqrt{ \inf (\Theta^{-1}) }\,\| x-x'\|
&\le d_{g_J}(u,u'),\\
d_{g_A}(u,u')
&\le \sqrt{\sup (\Theta^{-1})}\| x-x'\| 
+ \frac{\sqrt{n\sup \Theta}}{2}
\end{align*}
hold for any $u=(x,\theta), 
u'=(x',\theta')\in U_{\frac{R}{2}}$. 
\label{estimate on distance}
\end{prop}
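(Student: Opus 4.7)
The plan is to prove the two inequalities by first computing the co-metric $g_A^{-1}$ in block form and then applying it in opposite directions: a Lipschitz estimate for the lower bound, and an explicit short path for the upper bound.

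Write $g_A$ in the coframe $(d\theta^i,dx_i)$ as the block matrix $g_A=\begin{pmatrix}\Theta & -PQ^{-1}\\ -Q^{-1}P & Q^{-1}\end{pmatrix}$. A Schur-complement inversion, using the trivial identity $\Theta-PQ^{-1}P=Q$, yields $g_A^{-1}=\begin{pmatrix}Q^{-1} & Q^{-1}P\\ PQ^{-1} & \Theta\end{pmatrix}$; in particular the $(x,x)$-block of the co-metric is $\Theta$ itself, so for any $a=(a_1,\ldots,a_n)\in\R^n$ one has $|a_i\,dx_i|^2_{g_J}=a^T\Theta a$.

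For the lower bound, fix $u=(x,\theta),\,u'=(x',\theta')\in U_{R/2}$, set $a:=(x'-x)/\|x'-x\|$, and consider $f_a(z,\vartheta):=\langle a, z\rangle$ on $U$. The co-metric computation gives $|df_a|^2_{g_J}=a^T\Theta a\le\sup\Theta$ pointwise on $U_{R/2}$, so $f_a$ is $\sqrt{\sup\Theta}$-Lipschitz there. Integrating along any length-minimizing path (which one may assume stays in $U_{R/2}$, possibly after shrinking $R$ around $y_0$) yields $\|x-x'\|=|f_a(u')-f_a(u)|\le\sqrt{\sup\Theta}\cdot d_{g_J}(u,u')$, i.e.\ $d_{g_J}(u,u')\ge\|x-x'\|/\sqrt{\sup\Theta}=\sqrt{\inf\Theta^{-1}}\|x-x'\|$.

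For the upper bound I would concatenate two curves. Let $\gamma_1\colon[0,1]\to U_{R/2}$ have $x(t):=(1-t)x+tx'$ (which lies in the convex $B_{R/2}$) and let $\theta(t)$ solve the ODE $\dot\theta=\Theta^{-1}PQ^{-1}(x'-x)$, with coefficients evaluated at $\gamma_1(t)$; pointwise this $\dot\theta$ is the unique minimizer of $v\mapsto g_A(v,v)$ among vectors with $v^x=x'-x$, and by the Woodbury identity $Q^{-1}-Q^{-1}P\Theta^{-1}PQ^{-1}=\Theta^{-1}$ the minimum value equals $(x'-x)^T\Theta^{-1}(x'-x)\le\sup\Theta^{-1}\cdot\|x-x'\|^2$, giving $L(\gamma_1)\le\sqrt{\sup\Theta^{-1}}\|x-x'\|$. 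Next let $\gamma_2$ be the straight-line path in the fiber $\{x'\}\times T^n=\{x'\}\times\R^n/\Z^n$ from $\theta(1)$ to $\theta'$, with lifts chosen so that $\|\theta(1)-\theta'\|_{\R^n}\le\sqrt{n}/2$; since the induced fiber metric is $\Theta_{ij}d\theta^id\theta^j$ with largest eigenvalue at most $\sup\Theta$, one has $L(\gamma_2)\le\sqrt{n\sup\Theta}/2$. The triangle inequality then gives the claim. The main obstacle is the Woodbury identity step: it is what causes the sharper $\sqrt{\sup\Theta^{-1}}$ (rather than the naive $\sqrt{\sup Q^{-1}}$ that would come from $\dot\theta\equiv 0$) to appear, and the correction $\dot\theta=\Theta^{-1}PQ^{-1}(x'-x)$ has the geometric meaning of moving into a pseudo-horizontal direction transverse to the fibers. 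Solvability of the $\theta$-ODE on all of $[0,1]$ is automatic since the velocity is bounded on $\overline{U_{R/2}}$ and $\theta$ lives on the compact torus.
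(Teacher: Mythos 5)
Your proof is correct, and on the upper bound it is actually the argument that is \emph{needed} to establish the inequality as stated, because the paper's own computation there contains a slip. Both proofs start from the same identity $\Theta^{-1}=Q^{-1}-Q^{-1}P\Theta^{-1}PQ^{-1}$, but use it in opposite directions.

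For the lower bound, you work with the co-metric via the Schur complement: the $(x,x)$-block of $g_A^{-1}$ is $\Theta$, so $f_a(z,\vartheta)=\langle a,z\rangle$ has $|df_a|_{g_J}\le\sqrt{\sup\Theta}$ on $U_{R/2}$, and a Lipschitz estimate gives $d_{g_J}(u,u')\ge\|x-x'\|/\sqrt{\sup\Theta}=\sqrt{\inf\Theta^{-1}}\,\|x-x'\|$. The paper instead completes the square in $g_A$ itself, throws away the first square, keeps the residual ${}^tdx\cdot\Theta^{-1}\cdot dx$, and integrates along the path. These are dual viewpoints of the same block computation. Both treatments of a minimizing path that might leave $U_{R/2}$ are incomplete (you by a parenthetical ``one may assume,'' the paper by truncating at $\tau_0$ and asserting $\int_0^{\tau_0}|\dot x|\,d\tau\ge\|x-x'\|$, which is not automatic when $\tau_0<1$), but this is a shared gap, not one you introduced.

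For the upper bound the difference is substantive. The paper's path $c_3(\tau)=(\tau x+(1-\tau)x',\theta)$ has $\dot\theta\equiv 0$, so from the original expression for $g_A$ (or from the completed-square form, using the Woodbury identity) its speed is $\sqrt{{}^t(x'-x)Q^{-1}(x'-x)}$, \emph{not} $\sqrt{{}^t(x'-x)\Theta^{-1}(x'-x)}$ as written in the paper; the inequality $\Theta=Q+PQ^{-1}P\ge Q$ gives $Q^{-1}\ge\Theta^{-1}$, so this only yields the strictly weaker constant $\sqrt{\sup Q^{-1}}$ unless $P=0$. Your choice $\dot\theta=\Theta^{-1}PQ^{-1}\dot x$ is exactly what annihilates the first square in the decomposition
\begin{align*}
g_A={}^t\bigl(\sqrt{\Theta}\,d\theta-\sqrt{\Theta^{-1}}PQ^{-1}dx\bigr)\cdot\bigl(\sqrt{\Theta}\,d\theta-\sqrt{\Theta^{-1}}PQ^{-1}dx\bigr)+{}^t dx\cdot\Theta^{-1}\cdot dx,
\end{align*}
leaving speed $\sqrt{{}^t\dot x\,\Theta^{-1}\dot x}\le\sqrt{\sup\Theta^{-1}}\,\|x-x'\|$, and your fiberwise piece $\gamma_2$ gives $\sqrt{n\sup\Theta}/2$ exactly as in the paper. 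So your $\gamma_1$ with the $\theta$-drift is the correct sharpening, and it repairs the paper's step.
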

\begin{proof}
First of all we show the first equality. 
If we write 
\[ d\theta=
\left (
\begin{array}{c}
d\theta^1 \\
\vdots \\
d\theta^n
\end{array}
\right ),\quad 
dx=
\left (
\begin{array}{c}
dx_1 \\
\vdots \\
dx_n
\end{array}
\right ),\quad 
x=
\left (
\begin{array}{c}
x_1 \\
\vdots \\
x_n
\end{array}
\right ),
\]
then we may write 
\begin{align*}
g_A
&=
{}^td\theta \cdot \Theta \cdot d\theta
- {}^t dx\cdot Q^{-1}P\cdot d\theta
- {}^t d\theta\cdot PQ^{-1} \cdot dx
+ {}^t dx\cdot Q^{-1}\cdot dx\\
&=
{}^t\left( \sqrt{\Theta}d\theta - \sqrt{\Theta^{-1}}
PQ^{-1} dx\right)\cdot 
\left( \sqrt{\Theta}d\theta - \sqrt{\Theta^{-1}}
PQ^{-1} dx\right)\\
&\quad\quad
+ {}^t dx\cdot 
\left( Q^{-1}-Q^{-1}P\Theta^{-1}P Q^{-1}\right)\cdot dx.
\end{align*}
Since we have 
\begin{align*}
&\quad\ \Theta\left( Q^{-1}
-Q^{-1}P\Theta^{-1}P Q^{-1}\right)\\
&=1+PQ^{-1}PQ^{-1}
-P\Theta^{-1}P Q^{-1}
-PQ^{-1}PQ^{-1}P\Theta^{-1}P Q^{-1}\\
&=1+PQ^{-1}PQ^{-1}
-PQ^{-1}\left( Q+ PQ^{-1}P\right) 
\Theta^{-1}P Q^{-1}\\
&=1+PQ^{-1}PQ^{-1}
-PQ^{-1}P Q^{-1}=1,
\end{align*}
we can see 
\begin{align*}
\Theta^{-1}=Q^{-1}
-Q^{-1}P\Theta^{-1}P Q^{-1}.
\end{align*}
Therefore, 
\begin{align}
g_A
&=
{}^t\left( \sqrt{\Theta}d\theta - \sqrt{\Theta^{-1}}
PQ^{-1} dx\right)\cdot 
\left( \sqrt{\Theta}d\theta - \sqrt{\Theta^{-1}}
PQ^{-1} dx\right)\notag\\
&\quad\quad
+ {}^t dx\cdot 
\Theta^{-1}\cdot dx
\label{principal metric eq}
\end{align}
holds. 
Now let $c_1\colon [0,1]\to X$ be a 
path connecting 
$u,u'
\in U_{\frac{R}{2}}$, and 
put $u=(x,\theta)$ 
and $u'=(x',\theta')$ 
with $\| x\|,\| x'\|<\frac{R}{2}$. 
Note that the image of $c_3$ is not  
always contained in 
$U_{\frac{R}{2}}$. 
If ${\rm Im}(c_1) \subset U_{\frac{R}{2}}$ 
does not hold, 
then let 
\begin{align*}
\tau_0:=\inf\{ \tau\in [0,1];\, c_3(\tau)
\notin U_{\frac{R}{2}}\}. 
\end{align*}
If ${\rm Im}(c_1) \subset U_{\frac{R}{2}}$ 
holds, then put 
$\tau_0:=1$. 
Put $c_1(\tau)=(x(\tau),\theta(\tau))$. 
Then by \eqref{principal metric eq} 
we can see 
\begin{align*}
\mathcal{L}(c_1)
&\ge \int_0^{\tau_0}\sqrt{ {}^t x'(\tau)\cdot
\Theta^{-1}\cdot x'(\tau)}d\tau\\
&\ge \sqrt{ \inf (\Theta^{-1}) }
\int_0^{\tau_0}|x'(\tau)|d\tau
\ge \sqrt{ \inf (\Theta^{-1}) }\,\| x-x'\|. 
\end{align*}

Next we show the second inequality. 
To show it, we compute the length of 
two types of paths in 
$U_{\frac{R}{2}}$.

For $\theta\in \R^n$ put 
$c_2(\tau):=(x,\tau \theta)$, then \eqref{principal metric eq} 
gives
\begin{align*}
\mathcal{L}(c_2)=\int_0^1|c_1'(\tau)|_{g_A}d\tau
&=\int_0^1\sqrt{ \Theta_{ij} \theta^i\theta^j}d\tau\\
&\le \sqrt{ \sup\Theta}\| \theta\|.
\end{align*}
If $c_3(\tau):=(\tau x+(1-\tau)x',\theta)$, 
where $\| x\|\le \frac{R}{2}$, then 
\begin{align*}
\mathcal{L}(c_3)=\int_0^1|c_3'(\tau)|_{\hat{g}_A}d\tau
&=\int_0^1
\sqrt{\Theta^{ij} (x_i-x'_i)(x_j-x'_j)}d\tau\\
&\le \sqrt{\sup (\Theta^{-1})}\| x-x'\|.
\end{align*}
Connecting these two types of paths 
one can see
\begin{align*}
d_A(u,u')
&\le 
\sqrt{\sup (\Theta^{-1})}\| x\|
+\sqrt{\sup\Theta}\cdot
{\rm diam}(T^n)\\
&= \sqrt{\sup (\Theta^{-1})}\| x\|
+\frac{\sqrt{n\sup\Theta}}{2}.
\end{align*}
\end{proof}

Now, we describe Riemannian metric 
$\hat{g}(L|_U,J,h,\sigma,\nabla)$ 
using the identification \eqref{identification 1} 
in the case of $X_0$ is a strict $m$-BS fiber. 
First of all we consider the connection metric 
with respect to the pullback 
of $g_J$ and $L|_U$ 
by the covering map $p_\Phi\colon U_\Phi\to U$, 
which is obtained in Proposition \ref{normalize of connection}. 
We also denote by $p_\Phi\colon {p_\Phi}^* L\to 
L|_U$ 
the lift of the covering map, then 
the following commutative diagram is obtained; 
\[ 
\left.
\begin{array}{ccc}
p_\Phi^* L & \rightarrow & L|_U \\
\downarrow & \circlearrowleft & \downarrow \\
 U_\Phi & \to & U
\end{array}
\right.
\]
Let $p_\Phi^*J$ be the complex structure on 
$U_\Phi$ inherited from $U$ by the covering map. 
Then one can see 
\begin{align*}
S({p_\Phi}^* L,{p_\Phi}^* h) = p_\Phi^{-1}(S(L,h))
\end{align*}
and 
\begin{align*}
\hat{g}({p_\Phi}^* L, {p_\Phi}^* J, {p_\Phi}^* h, \sigma, {p_\Phi}^*\nabla)
={p_\Phi}^* \hat{g}(L|_U,J,h,\sigma,\nabla).
\end{align*}

Since ${p_\Phi}^* L$ is trivial as $C^\infty$ 
complex line bundle, 
there is the identification 
\[ 
\left.
\begin{array}{ccc}
U_\Phi\times S^1 & \rightarrow & 
S({p_\Phi}^* L,{p_\Phi}^* h) \\
\rotatebox{90}{$\in$} & & \rotatebox{90}{$\in$} \\
(x,\theta,e^{\sqrt{-1}t}) & \mapsto & e^{\sqrt{-1}t}\cdot E_{(x,\theta)}
\end{array}
\right.
\]
by \eqref{identification 1}, 
where $E\in C^\infty({p_\Phi}^*L)$ is taken 
as in Proposition 
\ref{normalize of connection}. 
Under the identification we have 
\begin{align*}
\hat{g}({p_\Phi}^* L, {p_\Phi}^* J, {p_\Phi}^* h, \sigma, {p_\Phi}^*\nabla)
&= \sigma(dt - x_id\theta^i)^2\\
&\quad\quad + 
(Q_{ij} + P_{ik}Q^{kl}P_{lj}) d\theta^i d\theta^j\\
&\quad\quad - 2 P_{ik}Q^{jk} d\theta^i dx_j
+ Q^{ij} dx_i dx_j.
\end{align*}
By Proposition \ref{normalize of connection}, 
the deck transformation of 
\begin{align*}
p_\Phi\colon (S({p_\Phi}^*L,{p_\Phi}^*h)) \to S(L|_U,h)
\end{align*}
is identified with 
\begin{align}
k\cdot(x,\theta,e^{\sqrt{-1}t})
:=(x,\theta+k\mathbf{w}_0, e^{\sqrt{-1}(t-\frac{2k\pi}{m})})
\quad(k\in\Z/m\Z), 
\label{deck}
\end{align}
where $\mathbf{w}_0\in\Z^n$ is 
taken such that $\Phi(\mathbf{w}_0)
=1\in\Z/m\Z$. 
Thus we obtain the next proposition.

\begin{prop}
Define the Riemannian metric $\hat{g}_A$ 
on $U_\Phi\times S^1$ by 
\begin{align*}
\hat{g}_A
&= \sigma(dt - x_id\theta^i)^2
+ (Q_{ij} + P_{ik}Q^{kl}P_{lj}) d\theta^i d\theta^j\\
&\quad\quad - 2 P_{ik}Q^{jk} d\theta^i dx_j
+ Q^{ij} dx_i dx_j,
\end{align*}
which is invariant under the 
$\Z/m\Z$ action defined by \eqref{deck}. 
If $X_0$ 
is a strict $m$-BS fiber, then 
\begin{align*}
{p_\Phi}^*\, \hat{g}(L|_U,J|_U,h,\sigma,\nabla) 
= \hat{g}_A
\end{align*}
holds. 
\label{description of connection metric}
\end{prop}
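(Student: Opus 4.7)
The plan is to assemble the proposition from three pieces already established: the unitary trivializing section $E$ of $p_\Phi^*L$ produced by Proposition \ref{normalize of connection}, the general identification \eqref{identification 1} for the connection metric in a local unit frame, and the explicit formula $g_J|_U = g_A$ obtained in the preceding subsection from the matrix-valued function $A=P+\sqrt{-1}Q$ describing $\mathcal{P}_J$.

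First I would pull back the entire setup by $p_\Phi$. Since $p_\Phi$ is a local diffeomorphism, the identifications $S(p_\Phi^*L,p_\Phi^*h) = p_\Phi^{-1}(S(L,h))$ and $\hat{g}(p_\Phi^*L,p_\Phi^*J,p_\Phi^*h,\sigma,p_\Phi^*\nabla) = p_\Phi^*\hat{g}(L|_U,J,h,\sigma,\nabla)$ are immediate. The key new input is that on $U_\Phi$ we have the unit section $E$ of $p_\Phi^*L$ with $\nabla E = -\sqrt{-1}x_i d\theta^i \otimes E$, so \eqref{identification 1} applied with $\gamma = -x_i d\theta^i$ produces an isometry between $(S(p_\Phi^*L,p_\Phi^*h),\hat{g})$ and $U_\Phi \times S^1$ endowed with $\sigma(dt - x_i d\theta^i)^2 + p_\Phi^*g_J|_U$. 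Inserting the previously derived formula for $g_J|_U = g_A$ in terms of $P_{ij}$ and $Q_{ij}$ gives exactly $\hat{g}_A$.

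Next I would verify the $\Z/m\Z$-invariance of $\hat{g}_A$ under \eqref{deck}. The forms $dx_j$, $d\theta^i$, $dt$ and the functions $x_j$ are all preserved because the action is a constant translation in the $\theta$- and $t$-coordinates with no shift in $x$. The coefficients $P_{ij}, Q_{ij}$ are pullbacks of functions on $U = B \times T^n$ under $p_\Phi$; since $k\mathbf{w}_0 \in \Z^n$, shifting $\theta$ by $k\mathbf{w}_0$ on $U_\Phi$ projects to the identity on $T^n$, so $P$ and $Q$ are invariant as well. Invariance of the whole expression follows. To finish, I would identify the deck action on $S(p_\Phi^*L,p_\Phi^*h)$ with \eqref{deck} in the trivialization: from $\beta(\Phi(k\mathbf{w}_0))^*E = e^{2k\sqrt{-1}\pi/m}E$ we get $E_{(x,\theta)} = e^{-2k\sqrt{-1}\pi/m}E_{(x,\theta+k\mathbf{w}_0)}$, so a point $e^{\sqrt{-1}t}E_{(x,\theta)}$ is sent by the deck action to the same element of the total space, which in the frame at $(x,\theta+k\mathbf{w}_0)$ reads $e^{\sqrt{-1}(t - 2k\pi/m)}E_{(x,\theta+k\mathbf{w}_0)}$, matching \eqref{deck}.

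There is no serious obstacle in this proof: everything reduces to a direct book-keeping of the identifications already set up. The only point requiring care is matching sign conventions, namely that the phase $e^{2k\sqrt{-1}\pi/m}$ in Proposition \ref{normalize of connection} translates into a $-2k\pi/m$ shift in the $t$-coordinate under the trivialization.
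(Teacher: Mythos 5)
Your proof follows the same route as the paper: pull back by $p_\Phi$, use the section $E$ from Proposition~\ref{normalize of connection} to get $\gamma=-x_id\theta^i$ in \eqref{identification 1}, substitute $g_J|_U=g_A$, and identify the deck action as \eqref{deck} via $\beta(k)^*E=e^{2k\sqrt{-1}\pi/m}E$. The only addition is your explicit coordinate check of $\Z/m\Z$-invariance, which the paper leaves implicit (it follows automatically because $\hat g_A$ is the pullback of a metric from the base); both are fine.
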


\subsection{Boundedness of the Ricci curvatures}\label{bdd ricci}
First of all we compute the Ricci 
curvature of $g_J|_U$. 
Since $\omega$ is the K\"ahler form on 
$(U,J)$, it suffices to compute the Ricci form 
of $\omega$. 
First of all we can see that 
\begin{align*}
\del\theta^i\left( 
\frac{\del}{\del\theta^j}+A_{jk}\frac{\del}{\del x_k}\right)
= d\theta^i\left( 
\frac{\del}{\del\theta^j}+A_{jk}\frac{\del}{\del x_k}\right) =\delta^i_j,
\end{align*}
hence $\del\theta^1,\ldots,\del\theta^n$ 
forms the dual frame of $\Omega^{1,0}$.

\begin{prop}
The K\"ahler form $\omega|_U$ and 
the Ricci form $\rho_\omega|_U$ are 
given by 
\begin{align*}
\omega|_U&=2\sqrt{-1}Q_{ij}\del\theta^i\wedge 
\delb\theta^j,\\
\rho_\omega|_U&=\sqrt{-1}\del\delb\log \det (Q_{ij}) 
- \sqrt{-1}\del\alpha + \sqrt{-1}\,
\overline{\del\alpha},
\end{align*}
where 
\begin{align*}
\alpha &:= \frac{\del \bar{A}_{ij}}{\del x_i}\delb\theta^j\in 
\Omega^{0,1}(U). 
\end{align*}
\label{Ricci form}
\end{prop}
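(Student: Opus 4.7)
The plan is to handle the K\"ahler form by direct substitution in the adapted $(1,0)$-coframe, and then compute the Ricci form via an intrinsic formula applied to a cleverly chosen smooth section of the canonical bundle.

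For $\omega|_U$: the $(1,0)$-coframe $\{\partial\theta^i\}$ dual to the frame $v_j = \partial/\partial\theta^j + A_{jk}\partial/\partial x_k$ of $\mathcal{P}_J$ is characterized by $\partial\theta^i(v_j)=\delta^i_j$, $\partial\theta^i(\bar v_j)=0$. Reading off $dx_j(v_i)=A_{ij}$ and $dx_j(\bar v_i)=\bar A_{ij}$ gives the type decomposition $dx_j = A_{ij}\partial\theta^i + \bar A_{ij}\bar\partial\theta^i$. Substituting this and $d\theta^j = \partial\theta^j + \bar\partial\theta^j$ into $\omega = dx_j\wedge d\theta^j$, the $(2,0)$ and $(0,2)$ pieces vanish by $A_{ij}=A_{ji}$, and the $(1,1)$-part reduces to $(A_{ij}-\bar A_{ij})\partial\theta^i\wedge\bar\partial\theta^j = 2\sqrt{-1}Q_{ij}\partial\theta^i\wedge\bar\partial\theta^j$.

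For $\rho_\omega|_U$: I would use the following general identity. For any nowhere-vanishing smooth local section $s$ of $K = \Lambda^n T^{*(1,0)}X$, write $\bar\partial s = \gamma_s\wedge s$ for the unique $(0,1)$-form $\gamma_s$. Then
\begin{align*}
\rho_\omega \;=\; \sqrt{-1}\,\partial\bar\partial\log|s|_h^2 \;-\; \sqrt{-1}\,\partial\gamma_s \;+\; \sqrt{-1}\,\overline{\partial\gamma_s},
\end{align*}
which follows from $\rho_\omega = -\sqrt{-1}F_K$ by writing $s = g\tau$ for a local holomorphic frame $\tau$, noting $\gamma_s = \bar\partial\log g$, and using $\partial\bar\partial\log|g|^2 = \partial\gamma_s - \overline{\partial\gamma_s}$ together with $F_K = -\partial\bar\partial\log|\tau|_h^2$. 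I apply this to the adapted section $s := \eta_1\wedge\cdots\wedge\eta_n$ where $\eta_k := dx_k - \bar A_{jk}d\theta^j$; this is everywhere nonzero because $\eta_k(v_i) = 2\sqrt{-1}Q_{ik}$ is non-degenerate. Solving $\partial\theta^i(v_j)=\delta^i_j$, $\partial\theta^i(\bar v_j)=0$ yields $\partial\theta^i = -(\sqrt{-1}/2)Q^{ki}\eta_k$, hence $s = (-2\sqrt{-1})^n\det Q\cdot\partial\theta^1\wedge\cdots\wedge\partial\theta^n$; combined with the induced metric on $K$ (for which $|\partial\theta^1\wedge\cdots\wedge\partial\theta^n|_h^2 = 1/(2^n\det Q)$, as the dual of $h_{i\bar j}=2Q_{ij}$), this gives $|s|_h^2 = 2^n\det Q$, so $\sqrt{-1}\partial\bar\partial\log|s|_h^2 = \sqrt{-1}\partial\bar\partial\log\det Q$, matching the leading term.

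It remains to identify $\gamma_s$ with $\alpha$. Since $d\eta_k = -d\bar A_{jk}\wedge d\theta^j$, extracting the $(1,1)$-part gives $\bar\partial\eta_k = -\partial\bar A_{jk}\wedge\bar\partial\theta^j - \bar\partial\bar A_{jk}\wedge\partial\theta^j$. In $\bar\partial s = \sum_k(-1)^{k-1}\bar\partial\eta_k\wedge\eta_{\hat k}$, inserting $\partial\theta^j = -(\sqrt{-1}/2)Q^{lj}\eta_l$ and using $\eta_l\wedge\eta_{\hat k} = (-1)^{k-1}\delta^l_k\,s$ collapses the ``$\bar\partial\bar A_{jk}$-part'' to a clean $(0,1)$-form wedge $s$; a parallel manipulation using $\bar\partial\theta^j = (\sqrt{-1}/2)Q^{lj}\bar\eta_l$ together with $\bar\eta_l = -2\sqrt{-1}Q_{li}\bar\partial\theta^i$ handles the ``$\partial\bar A_{jk}$-part''. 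The hard part is the final algebraic collapse: a priori $\gamma_s$ contains $\theta$-derivatives of $\bar A$ and quadratic cross-terms with $A$ and $Q^{-1}$, and only after invoking the conjugate of the integrability relation \eqref{integrable} together with the symmetries $A_{ij}=A_{ji}$ and $Q_{ij}=Q_{ji}$ does the bulk of these terms cancel, leaving exactly $\gamma_s = (\partial\bar A_{ij}/\partial x_i)\bar\partial\theta^j = \alpha$. The $n=1$ case, where \eqref{integrable} is vacuous and $\bar\partial\eta = -d\bar A\wedge d\theta$ directly produces $\alpha_1 = \partial\bar A/\partial x$, provides a useful sanity check of the bookkeeping.
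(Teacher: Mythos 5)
The $\omega|_U$ computation is essentially the paper's, just spelled out with the full type-decomposition of $dx_j$; the paper abbreviates via $\del x_i = A_{ij}\del\theta^j$. For $\rho_\omega|_U$ your route is set up differently: you first prove a general identity $\rho_\omega = \sqrt{-1}\del\delb\log|s|_h^2 - \sqrt{-1}\del\gamma_s + \sqrt{-1}\overline{\del\gamma_s}$, valid for any nowhere-vanishing \emph{smooth} local section $s$ of $K$ with $\delb s = \gamma_s\wedge s$, and then apply it to the explicit smooth section $s=\eta_1\wedge\cdots\wedge\eta_n$, $\eta_k = dx_k-\bar{A}_{jk}d\theta^j$. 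The paper instead chooses a \emph{holomorphic} section $\Omega=f\,\del\theta^1\wedge\cdots\wedge\del\theta^n$ on a smaller open set $U'$, writes $\rho_\omega=-\sqrt{-1}\del\delb\log\frac{\omega^n}{\Omega\wedge\overline{\Omega}}$, converts $\del\delb\log|f|^2$ to $\del\beta-\overline{\del\beta}$ with $\beta=f^{-1}\delb f$, and reads off $\beta$ from the constraint $\delb\Omega=0$. Your version is the more invariant repackaging: it avoids the auxiliary shrinking of $U$ and the unknown holomorphic factor $f$ entirely, at the cost of proving the general identity up front. Both routes concentrate the real work at the same point, namely computing the $(0,1)$-form $\gamma_s$ (equivalently $\beta$), which is where the integrability relation \eqref{integrable} must enter.

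That said, the proposal leaves precisely this core step as an assertion. The passage from $\delb(\eta_1\wedge\cdots\wedge\eta_n)$ to $\gamma_s=\frac{\del\bar{A}_{ij}}{\del x_i}\delb\theta^j$ --- inserting $\del\theta^j=-\frac{\sqrt{-1}}{2}Q^{lj}\eta_l$, treating the $\del\bar{A}_{jk}\wedge\delb\theta^j$ and $\delb\bar{A}_{jk}\wedge\del\theta^j$ contributions separately, and then invoking the conjugate of \eqref{integrable} together with the symmetries $A_{ij}=A_{ji}$, $Q_{ij}=Q_{ji}$ to collapse the quadratic terms --- is exactly the content of the paper's computation of $\delb\del\theta^i$ and the subsequent Jacobi-formula step. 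Your $n=1$ check is correct and confirms the conventions, and the general-$n$ claim is true (the paper's computation in the other frame verifies it), but this is the substantive part of the proposition and must be written out for the proof to be complete. A minor slip: the scalar relating $s$ to $\del\theta^1\wedge\cdots\wedge\del\theta^n$ is $(2\sqrt{-1})^n\det Q$, not $(-2\sqrt{-1})^n\det Q$; the modulus, and hence $|s|_h^2=2^n\det Q$, is unaffected.
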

\begin{proof}
Since $dx_i-A_{ij}d\theta^j$ is of type 
$(0,1)$, one can see 
$\del x_i=A_{ij}\del\theta^j$. Then 
we have 
\begin{align*}
\omega|_U=dx_i\wedge d\theta^i
=\del x_i\wedge \delb \theta^i
+ \delb x_i\wedge \del \theta^i
= 2\sqrt{-1}Q_{ij}\del \theta^i
\wedge \delb \theta^j.
\end{align*}

Take $f\in C^\infty(U',\C^\times)$ such that 
$\Omega:=f\del\theta^1\wedge\cdots\wedge\del\theta^n$ 
is a nowhere vanishing holomorphic section 
of the canonical bundle $K_X|_{U'}$ on some 
open set $U'\subset U$. 
If we put $\beta=f^{-1}\delb f$, 
then the Ricci form $\rho_\omega|_{U'}$ 
is given by 
\begin{align*}
-\sqrt{-1}\del\delb\log \frac{\omega|_{U'}^n}{\Omega\wedge \overline{\Omega}}
&= -\sqrt{-1}\del\delb\log \det (Q_{ij}) 
+ \sqrt{-1}\del\delb\log |f|^2\\
&= -\sqrt{-1}\del\delb\log \det (Q_{ij}) 
+ \sqrt{-1}\del\beta - \sqrt{-1}\,\overline{\del\beta}.
\end{align*}
Since we have 
\begin{align}
0 = f^{-1}\delb \Omega
= \beta\wedge \del\theta^1\wedge\cdots\wedge\del\theta^n
+ \delb (\del\theta^1\wedge
\cdots\wedge\del\theta^n),\label{delb of hol vol}
\end{align}
it suffices to compute 
$\delb \del\theta^i$ to describe $\beta$. 
Now, we have
\begin{align*}
&\quad \delb \del\theta^i
\left( \frac{\del}{\del\theta^k}+A_{kj}\frac{\del}{\del x_j}, 
\frac{\del}{\del\theta^l}+\bar{A}_{lh}\frac{\del}{\del x_h}
\right)\\
&= d\del\theta^i
\left( \frac{\del}{\del\theta^k}+A_{kj}\frac{\del}{\del x_j}, 
\frac{\del}{\del\theta^l}+\bar{A}_{lh}\frac{\del}{\del x_h}
\right)\\
&= -\del\theta^i
\left( \left[ \frac{\del}{\del\theta^k}+A_{kj}\frac{\del}{\del x_j}, 
\frac{\del}{\del\theta^l}+\bar{A}_{lh}\frac{\del}{\del x_h}
\right]\right)\\
&= -\left(\frac{\del \bar{A}_{lh}}{\del\theta^k}+A_{kj}\frac{\del \bar{A}_{lh}}{\del x_j}\right)\del\theta^i
\left(\frac{\del}{\del x_h}\right)
+
\left(\frac{\del A_{kj}}{\del\theta^l}+\bar{A}_{lh}\frac{\del A_{kj}}{\del x_h}\right)\del\theta^i
\left(\frac{\del}{\del x_j}\right).
\end{align*}
Since 
\begin{align*}
\frac{\del}{\del x_h}
= \frac{Q^{hl}}{2\sqrt{-1}}
\left( \frac{\del}{\del\theta^l}+A_{lk}\frac{\del}{\del x_k}
- \frac{\del}{\del\theta^l}
-\bar{A}_{lk}\frac{\del}{\del x_k}\right)
\end{align*}
holds, we have 
$\del\theta^i
\left(\frac{\del}{\del x_h}\right)
=\frac{Q^{hi}}{2\sqrt{-1}}$, 
which gives 
\begin{align*}
\delb \del\theta^i=
- \frac{Q^{hi}}{2\sqrt{-1}}\left(
\frac{\del \bar{A}_{lh}}{\del\theta^k}+A_{kj}\frac{\del \bar{A}_{lh}}{\del x_j}
- \frac{\del A_{kh}}{\del\theta^l}
- \bar{A}_{lj}\frac{\del A_{kh}}{\del x_j}
\right)\del\theta^k \wedge 
\delb\theta^l.
\end{align*}
Moreover, the integrability of $J$ implies 
\begin{align*}
\frac{\del \bar{A}_{lh}}{\del\theta^k}+A_{kj}\frac{\del \bar{A}_{lh}}{\del x_j}
&= \frac{\del \bar{A}_{lh}}{\del\theta^k}+\bar{A}_{kj}\frac{\del \bar{A}_{lh}}{\del x_j}
+2\sqrt{-1}Q_{kj}\frac{\del \bar{A}_{lh}}{\del x_j}\\
&= \frac{\del \bar{A}_{kh}}{\del\theta^l}+\bar{A}_{lj}\frac{\del \bar{A}_{kh}}{\del x_j}
+2\sqrt{-1}Q_{kj}\frac{\del \bar{A}_{lh}}{\del x_j},
\end{align*}
accordingly one can see that 
\begin{align}
\delb \del\theta^i
&=
Q^{hi}\left( 
\frac{\del Q_{kh}}{\del\theta^l}
+ \bar{A}_{lj}\frac{\del Q_{kh}}{\del x_j}
- Q_{kj}\frac{\del \bar{A}_{lh}}{\del x_j}
\right)\del\theta^k \wedge 
\delb\theta^l.\label{ddbar}
\end{align}

By combining \eqref{delb of hol vol}, 
we have 
\begin{align*}
\beta
&= \left( Q^{ih}
\frac{\del Q_{ih}}{\del\theta^l}
+ \bar{A}_{lj}Q^{ih}\frac{\del Q_{ih}}{\del x_j}
- \frac{\del \bar{A}_{lj}}{\del x_j}
\right)\delb\theta^l. 
\end{align*}
Now the Jacobi's formula yields 
\begin{align*}
\delb (\log \det(Q_{ij}))
= Q^{ih}\delb Q_{ih}
&= Q^{ih}\left( \frac{\del Q_{ih}}{\del \theta^l}
\delb\theta^l
+ \frac{\del Q_{ih}}{\del x^j}
\delb x^j \right)\\
&= Q^{ih}\left( \frac{\del Q_{ih}}{\del \theta^l}
+ \bar{A}_{jl}\frac{\del Q_{ih}}{\del x^j}
\right)\delb\theta^l,
\end{align*}
therefore, we obtain 
\begin{align*}
\beta
= \delb (\log \det(Q_{ij}))
- \frac{\del \bar{A}_{lj}}{\del x_j}\delb\theta^l,
\end{align*}
which gives the assertion. 
\end{proof}

\begin{prop}
Let $\alpha$ be as in Proposition 
$\ref{Ricci form}$. 
Then we have 
\begin{align*}
\del\alpha
&= \left( \frac{\del^2 \bar{A}_{il}}{\del\theta^k\del x_i} 
+ A_{kh}\frac{\del^2 \bar{A}_{il}}{\del x_h\del x_i}\right)\del \theta^k
\wedge \delb\theta^l
\\
&\quad \quad 
- Q^{mh}\frac{\del \bar{A}_{im}}{\del x_i}
\left( \frac{\del Q_{kh}}{\del\theta^l}
+ \bar{A}_{lj}\frac{\del Q_{kh}}{\del x_j}
- Q_{kj} \frac{\del \bar{A}_{lh}}{\del x_j}
\right)
\del\theta^k\wedge\delb\theta^l.
\end{align*}
\label{Ricci form 2}
\end{prop}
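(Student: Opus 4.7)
The plan is to compute $\partial\alpha$ directly by applying the Leibniz rule to $\alpha=\frac{\partial \bar{A}_{il}}{\partial x_i}\bar\partial\theta^l$ and then invoking equation \eqref{ddbar} for the term that involves $\partial\bar\partial\theta^l$. Write $f_l:=\frac{\partial\bar{A}_{il}}{\partial x_i}$, so that
\begin{align*}
\partial\alpha = (\partial f_l)\wedge\bar\partial\theta^l + f_l\,\partial\bar\partial\theta^l.
\end{align*}
These two pieces will give exactly the two terms in the statement.

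For the first piece, I would compute $\partial f_l$ in the holomorphic coframe $\{\partial\theta^k\}$ dual to $\frac{\partial}{\partial\theta^k}+A_{kj}\frac{\partial}{\partial x_j}$. Since $\partial x_k = A_{kj}\partial\theta^j$ (as noted at the start of the proof of Proposition \ref{Ricci form}), one has the $(1,0)$-decomposition
\begin{align*}
\partial f = \left(\frac{\partial f}{\partial \theta^k}+A_{kh}\frac{\partial f}{\partial x_h}\right)\partial\theta^k
\end{align*}
for any smooth function $f$, using the symmetry $A_{kh}=A_{hk}$. Applying this to $f=f_l=\frac{\partial\bar{A}_{il}}{\partial x_i}$ and wedging with $\bar\partial\theta^l$ yields precisely the first line of the stated formula.

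For the second piece, I would use that $\theta^l$ is a real function, so $d^2\theta^l=0$ together with type decomposition forces $\partial\bar\partial\theta^l = -\bar\partial\partial\theta^l$. Then \eqref{ddbar} gives $\bar\partial\partial\theta^l$ explicitly, and multiplying by $f_l=\frac{\partial\bar{A}_{il}}{\partial x_i}$ and renaming the contracted indices (using $Q^{hm}=Q^{mh}$) produces the second line of the statement, with the overall minus sign accounted for by $\partial\bar\partial = -\bar\partial\partial$.

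The only real obstacle is bookkeeping: correctly tracking which indices are summed where and ensuring the $(1,0)$-decomposition of $\partial f_l$ uses $A_{kh}$ (coming from $\partial x_h = A_{hj}\partial\theta^j$) rather than $\bar{A}_{kh}$. There is no new geometric input beyond Leibniz, the identity $\partial\bar\partial\theta^l = -\bar\partial\partial\theta^l$, and the already-established formula \eqref{ddbar}; the rest is index manipulation.
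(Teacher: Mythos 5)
Your proposal is correct and takes exactly the same route as the paper: Leibniz on $\alpha=f_l\,\delb\theta^l$, the $(1,0)$-decomposition of $\del f_l$ in the $\del\theta^k$ coframe for the first term, and $\del\delb\theta^l=-\delb\del\theta^l$ together with \eqref{ddbar} (plus index renaming via $Q^{mh}=Q^{hm}$) for the second. You spell out two details the paper's one-line proof leaves implicit, but the argument is identical.
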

\begin{proof}
Since 
\begin{align*}
\del\alpha
&=\del \left( \frac{\del \bar{A}_{il}}{\del x_i}\delb\theta^l\right)\\
&=\left( \frac{\del^2 \bar{A}_{il}}{\del\theta^k\del x_i} 
+ A_{kh}\frac{\del^2 \bar{A}_{il}}{\del x_h\del x_i}\right)\del\theta^k\wedge \delb\theta^l
+\frac{\del \bar{A}_{il}}{\del x_i} \del \delb\theta^l,
\end{align*}
the assertion follows from \eqref{ddbar}.
\end{proof}

From now on we consider the one parameter 
family of $\omega$-compatible complex structures 
$\{ J_s\}_{0<s<\delta}$ on 
$(X,\omega)$. 
Then we denote by $A(s,\cdot)$ the matrix 
valued function 
corresponding to $J_s|_U$. 
For simplicity, we often write 
$A=A(s,\cdot)$ if there is no fear 
of confusion. 
We assume the following condition $\spadesuit$ 
for $\{ J_s\}$. 
Let ${\rm pr}\colon X\times [0,\delta)\to X$ 
be the projection and 
${\rm pr}^*{\rm Lag}_\omega$ be the 
pullback bundle. 
\begin{itemize}
\setlength{\parskip}{0cm}
\setlength{\itemsep}{0cm}
 \item[$\spadesuit$] 
There is a smooth section 
$\mathcal{P}$ of ${\rm pr}^*{\rm Lag}_\omega|_{U\times [0,\delta)}
\to U\times [0,\delta)$ such that 
$\mathcal{P}(\cdot,s)=\mathcal{P}_{J_s}|_U$ for 
$s>0$, $\mathcal{P}(\cdot,0)
=\mathcal{P}_\mu|_U$ and 
\begin{align*}
\frac{d}{ds}\mathcal{P}(x,s)\Big|_{s=0}\in 
T_{\mathcal{P}_\mu(x)}{\rm Lag}(T_xX\otimes \C, 
\omega_x)_+.
\end{align*}
\end{itemize}
By assuming 
$\spadesuit$, 
there are a constant $K>0$ and 
$A^0\in C^\infty(U)\otimes M_n(\C)$ 
such that 
$\sup_{i,j}\| A_{ij}(s,\cdot)-sA^0_{ij}
\|_{C^2(U)} \le Ks^2$, ${\rm Im}(A^0)$ is a 
positive definite symmetric matrix and 
$\sup_{i,j}\| A^0_{ij}\|_{C^2(U)}<\infty$.

For a function $f_0(s,x,\theta)$ and 
$f_1(s,x,\theta)$ we write 
\begin{align*}
f_0(s,x,\theta) = f_1(s,x,\theta)
+\mathcal{O}_{C^l}(s^k)
\end{align*}
if there exists a constant $K>0$ 
such that $\| f_0(s,x,\theta) 
- f_1(s,x,\theta)\|_{C^l(U)}\le Ks^k$. 
For instance, if $\{ J_s\}_{s}$ 
satisfies $\spadesuit$, 
then we may write 
\begin{align*}
A_{ij} = sA^0_{ij}+\mathcal{O}_{C^2}(s^2).
\end{align*}

\begin{prop}
Assume that $\{ J_s\}_{s}$ 
satisfies $\spadesuit$. 
Put 
\begin{align*}
A^0_{ij} = P^0_{ij}+\sqrt{-1}Q^0_{ij}
\end{align*}
for $P^0_{ij},Q^0_{ij}\in C^\infty(U,\R)$. 
\begin{itemize}
\setlength{\parskip}{0cm}
\setlength{\itemsep}{0cm}
 \item[$({\rm i})$] $\frac{\del A^0_{ij}}{\del \theta^k}=\frac{\del A^0_{ik}}{\del \theta^j}$ hold for any 
$i,j,k$. 
 \item[$({\rm ii})$] Let ${\rm Ric}_{g_{J_s}}$ be 
the Ricci curvature of $g_{J_s}$. 
There exists a constant $\kappa\in\R$ 
such that 
${\rm Ric}_{g_{J_s}}\ge \kappa g_{J_s}$ hold for all $0<s<\delta$, if and only if 
$Q^0_{ij}(x,\theta)$ are independent of 
$\theta\in T^n$. 
\end{itemize}
\label{lower bdd of ricci}
\end{prop}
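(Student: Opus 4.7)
My plan for both parts is to substitute the expansion $A_{ij}(s,x,\theta) = sA^0_{ij}(x,\theta) + \mathcal{O}_{C^2}(s^2)$ from $\spadesuit$ into the formulas derived in the previous sections and extract the leading behaviour in $s$. For part (i), I would insert this expansion into the integrability equation \eqref{integrable}: the two linear terms $\del A/\del\theta$ contribute at order $s$, while the two bilinear terms $A\cdot\del A/\del x$ are of order $s^2$. Dividing by $s$ and letting $s\to 0$ yields $\del A^0_{jk}/\del\theta^i = \del A^0_{ik}/\del\theta^j$, and separating real and imaginary parts yields the stated symmetry for $Q^0$ (as well as for $P^0$).

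For part (ii), since $\omega$ is the K\"ahler form of $g_{J_s}$ and is $s$-independent, the condition ${\rm Ric}_{g_{J_s}}\ge \kappa g_{J_s}$ is equivalent to $\rho_\omega \ge \kappa\omega$ as real $(1,1)$-forms. Writing $\rho_\omega = \sqrt{-1}\,\tilde\rho_{kl}\,\del\theta^k\wedge\delb\theta^l$ and $\omega = 2\sqrt{-1}\,Q_{kl}\,\del\theta^k\wedge\delb\theta^l$ in the frame, this becomes the condition $\tilde\rho \ge 2\kappa Q$ as Hermitian matrices uniformly on $(0,\delta)\times\overline{U_{R/2}}$. The key computation is to expand $\rho_\omega$ asymptotically via Propositions \ref{Ricci form} and \ref{Ricci form 2}, using $Q^{-1}=s^{-1}(Q^0)^{-1}+\mathcal{O}(1)$. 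A straightforward power count shows that $\del\alpha$ and $\overline{\del\alpha}$ contribute at order $s$ (the $s^{-1}$ from $Q^{-1}$ is absorbed by the factor $s$ appearing in $\del\bar A/\del x$), while $\ddb\log\det Q$ has a leading part of order $s^0$ whose coefficients involve $\theta$-derivatives of $Q^0$ --- in particular the Hessian-type term $\del^2\log\det Q^0/\del\theta^k\del\theta^l$ together with correction terms coming from $\ddb\theta^k$ via \eqref{ddbar}. Thus modulo $\mathcal{O}(s)$, $\tilde\rho$ reduces to a Hermitian matrix $\tilde\rho^{(0)}$ that depends only on $Q^0$ and its $\theta$-derivatives and vanishes precisely when those derivatives vanish.

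The $(\Leftarrow)$ direction then follows immediately: if $Q^0$ is $\theta$-independent, then $\tilde\rho=\mathcal{O}(s)$ and $2\kappa Q=\mathcal{O}(s)$, so $s^{-1}(\tilde\rho-2\kappa Q)$ is uniformly bounded on the compact set $\overline{U_{R/2}}$, and any sufficiently negative $\kappa$ secures the bound. For the $(\Rightarrow)$ direction, the hypothesis forces $\tilde\rho^{(0)}\ge 0$ pointwise. Restricting to a torus $\{x=x_0\}\cong T^n$, the trace of the Hessian-type summand $\del^2\log\det Q^0/\del\theta^k\del\theta^l$ has vanishing average over $T^n$ while being a diagonal term of a nonnegative Hermitian matrix; combined with the further $\del Q^0/\del\theta$ contributions coming from the $\ddb\theta^k$ correction --- which after invoking the symmetry from part (i) I expect to assemble into a sum of squares --- the pointwise nonnegativity forces every $\del Q^0_{ij}/\del\theta^k$ to vanish identically.

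The main obstacle I expect is the identification of the precise algebraic form of $\tilde\rho^{(0)}$. One must show that beyond the scalar Hessian of $\log\det Q^0$, the additional coefficients in $\tilde\rho^{(0)}$ (coming from \eqref{ddbar} applied to $\ddb\theta^k$) assemble, after invoking part (i), into a positive-semidefinite quadratic form in the components $\del Q^0_{ij}/\del\theta^k$, so that pointwise nonnegativity combined with the zero-average property on $T^n$ pins down all of these derivatives. This is the delicate algebraic step of the argument; once in hand, both implications follow.
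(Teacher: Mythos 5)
Part (i) of your proposal is correct and matches the paper. For part (ii), the $(\Leftarrow)$ direction is also fine. The problem is the $(\Rightarrow)$ direction, where your plan rests on a wrong guess about the algebraic form of $\tilde\rho^{(0)}$, and even with the correct form, would prove strictly less than the claim.

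Concretely, the paper shows (via Propositions \ref{Ricci form} and \ref{Ricci form 2}) that the $\del\alpha$ and $\overline{\del\alpha}$ corrections are $\mathcal{O}_{C^0}(s)$ and that the leading-order coefficient matrix of $\rho_\omega$ in the $\del\theta^k\wedge\delb\theta^l$ frame is exactly ${\rm Hess}_\theta(\log\det Q^0)$, the Hessian of a single scalar function. It is \emph{not}, as you anticipate, a sum of that scalar Hessian plus some positive-semidefinite quadratic form in the individual components $\del Q^0_{ij}/\del\theta^k$. Consequently, nonnegativity plus the zero-average property on $T^n$ (which is the integrated maximum principle you invoke) only forces $\log\det Q^0$ to be constant on each torus $\{x\}\times T^n$, i.e., forces $\det Q^0$ to be $\theta$-independent. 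This is strictly weaker than the statement of the proposition, which asserts that the full matrix $Q^0_{ij}$ is $\theta$-independent. A matrix can have constant determinant while its entries vary.

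The missing ingredient is the paper's final step, which goes from $\det Q^0$ constant to $Q^0$ constant: using part (i), $Q^0_{ij}d\theta^j$ is a closed $1$-form on $\{x\}\times T^n$, so one can split $Q^0_{ij}=\bar{Q}_{ij}(x)+\frac{\del^2 F}{\del\theta^i\del\theta^j}$ with $\bar{Q}_{ij}$ the harmonic (average) part. Then one views both $\bar{Q}_{ij}dz^id\bar z^j$ and $Q^0_{ij}dz^id\bar z^j$ as K\"ahler metrics on a complex torus $T^n\times T^n_{\rm copy}$; the first is flat hence Ricci-flat, and the second is also Ricci-flat since $\det Q^0$ is constant. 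Both lie in the same K\"ahler class, so by uniqueness of the Ricci-flat K\"ahler metric in a fixed class, $Q^0=\bar{Q}$, i.e., $Q^0$ is $\theta$-independent. Without this cohomological splitting and the Calabi-Yau uniqueness argument, your proof cannot close the gap between ``$\det Q^0$ constant'' and ``$Q^0$ constant.''
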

\begin{proof}
We have $\frac{\del A_{ij}}{\del \theta^k}
=s\frac{\del A^0_{ij}}{\del \theta^k}+
 \mathcal{O}_{C^1}(s^2)$ and 
$\frac{\del A_{ij}}{\del x_k}
=s\frac{\del A^0_{ij}}{\del x_k}+
 \mathcal{O}_{C^1}(s^2)$, 
then by \eqref{integrable} and taking 
$s\to 0$ we obtain $({\rm i})$. 

Next we show $({\rm ii})$. 
It suffices to discuss the existence of 
$\kappa$ such that 
$\rho_\omega\ge \kappa\omega$ holds. 
To show it, 
we write 
$\rho_{\omega}=
\sqrt{-1}\rho_{kl}\del \theta^k
\wedge\delb\theta^l$ 
for $\rho_{kl}\in\R$, 
then we expand 
$\rho_{kl}$ about $s=0$.

We have 
\begin{align*}
\det Q_{ij} &= s^n \left( \det Q^0_{ij} 
+ \mathcal{O}_{C^2}(s)\right),\\
\log \det Q_{ij} &= \log ( s^n) 
+ \log \det Q^0_{ij} 
+ \mathcal{O}_{C^2}(s),
\end{align*}
where $A^0_{ij}=P^0_{ij}+\sqrt{-1}Q^0_{ij}$, 
and 
\begin{align*}
Q^{ij} = s^{-1} Q^{0,ij} 
+ \mathcal{O}_{C^2}(1),
\end{align*}
where $(Q^{0,ij})_{i,j}$ is the inverse 
of $(Q^0_{ij})_{i,j}$. 
Since $\frac{\partial}{\partial\theta^i}
+A_{ij} \frac{\partial}{\partial x_j}$ forms 
the dual basis of $\del\theta^i$, we have 
\begin{align*}
\del\delb \log\det Q_{ij} 
&= \left( 
\frac{\partial^2(\log \det Q^0_{ij})}{\partial\theta^k\partial\theta^l}
+ \mathcal{O}_{C^0}(s)
\right)
\del\theta^k\wedge\delb\theta^l,\\
\del\alpha -\overline{\del\alpha}
&= \left( \mathcal{O}_{C^0}(s)\right)
\del\theta^k\wedge\delb\theta^l.
\end{align*}
Set $H=\log \det Q^0_{ij}$. 
We obtain 
\begin{align*}
\rho_\omega
&= \sqrt{-1}\left( 
\frac{\partial^2 H}{\partial\theta^k\partial\theta^l}
+ \mathcal{O}_{C^0}(s)
\right)
\del\theta^k\wedge\delb\theta^l. 
\end{align*}
Put $Q=(Q_{ij})_{ij}$, $Q^0=(Q^0_{ij})_{ij}$ 
and ${\rm Hess}_\theta H=(\frac{\partial^2 H}{\partial\theta^i\partial\theta^j})_{ij}$, 
and let $\sqrt{Q}$ be the symmetric matrix 
such that $\sqrt{Q}^2=Q$. 
Since $\omega=2\sqrt{-1}Q_{kl}\del\theta^k\wedge
\delb\theta^l$, 
then $\rho_\omega \ge \kappa\omega$ holds 
for some $\kappa\in\R$ 
if and only if the eigenvalues of 
$\sqrt{Q^{-1}}({\rm Hess}_\theta H+
\mathcal{O}_{C^0}(s))\sqrt{Q^{-1}}$ 
are bounded from the below by a constant. 
Since 
\begin{align*}
\sqrt{Q^{-1}} 
=\sqrt{s^{-1}}
\sqrt{(Q^0)^{-1} + \mathcal{O}_{C^2}(s)}
=\sqrt{s^{-1}}\left(
\sqrt{(Q^0)^{-1}} + \mathcal{O}_{C^2}(s)\right),
\end{align*}
we obtain 
\begin{align*}
&\quad\ \sqrt{Q^{-1}}({\rm Hess}_\theta H+
\mathcal{O}_{C^0}(s))\sqrt{Q^{-1}}\\
&= s^{-1}\left(
\sqrt{(Q^0)^{-1}} + \mathcal{O}_{C^2}(s)\right)
({\rm Hess}_\theta H+
\mathcal{O}_{C^0}(s))
\left(
\sqrt{(Q^0)^{-1}} + \mathcal{O}_{C^2}(s)\right)\\
&= s^{-1}
\sqrt{(Q^0)^{-1}}
{\rm Hess}_\theta H
\sqrt{(Q^0)^{-1}} + \mathcal{O}_{C^0}(1). 
\end{align*}

Therefore, the existence of 
the lower bound of the Ricci curvatures 
of $\{ g_{J_s}\}$ is equivalent to 
\begin{align*}
\sqrt{(Q^0)^{-1}}
{\rm Hess}_\theta H
\sqrt{(Q^0)^{-1}}\ge 0, 
\end{align*}
moreover, it is equivalent to 
${\rm Hess}_\theta H\ge 0$. 
Consequently, $H$ should be constant 
by the maximum principle.

By the imaginary part of $({\rm i})$, 
we can see that $Q^0_{ij}d\theta^j$ is a 
closed $1$-form on $\{ x\}\times T^n$, hence 
there exists a constant 
$\bar{Q}_{ij}$ depends only on $x$ such that 
$[\bar{Q}_{ij}d\theta^j]=[Q^0_{ij}d\theta^j]
\in H^1(\{ x\}\times T^n)$. 
Consequently, there are $F_i(x,\cdot)\in
C^\infty(\{ x\}\times T^n)$ such that 
$Q^0_{ij} = \bar{Q}_{ij} + \frac{\del F_i}{\del \theta^j}$ holds. 
Integrating this equality over 
$\{ x\}\times T^n$, 
we have 
\begin{align*}
\int_{\{ x\}\times T^n}
Q^0_{ij}(x,\theta)d\theta^1\cdots d\theta^n
=\bar{Q}_{ij}(x),
\end{align*}
which implies that $(\bar{Q}_{ij})_{i,j}$ is a 
positive definite symmetric matrix. 
Since $\frac{\del F_i}{\del \theta^j}
=\frac{\del F_j}{\del \theta^i}$ holds, 
one can see 
that $F_id\theta^i$ is a 
closed $1$-form on $\{ x\}\times T^n$, then by repeating the above argument, there are 
$F(x,\cdot)\in C^\infty(\{ x\}\times T^n)$ 
and $\bar{Q}_{i}(x)\in\R$ such that 
$F_i=\bar{Q}_{i} + \frac{\del F}{\del \theta^i}$, 
hence we may write 
\begin{align*}
Q^0_{ij} = \bar{Q}_{ij} 
+ \frac{\del^2 F}{\del \theta^i\del \theta^j}.
\end{align*}
Since $\bar{Q}_{ij}$ can be obtained by 
integrating $Q^0_{ij}$ along some cycles of 
$H_1(\{ x\}\times T^n,\Z)$, 
$(\bar{Q}_{ij})_{i,j}$ is also a positive definite 
symmetric matrix. 
Now we take another torus 
$T^n_{\rm copy}=\R^n/\Z^n$ and the coordinate 
$\tau^1,\ldots,\tau^n$ coming from $\R^n$. 
Next we regard 
$M_x:=\{ x\}\times T^n \times T^n_{\rm copy}$ 
as a complex manifold whose holomorphic coordinate is given by 
\begin{align*}
z^1:=\theta^1+\sqrt{-1}\tau^1,\ldots,
z^n:=\theta^n+\sqrt{-1}\tau^n.
\end{align*}
Define the K\"ahler form $\hat{\omega}_x$ 
on $M_x$ by 
$\hat{\omega}_x:=\sqrt{-1}\bar{Q}_{ij}(x)dz^i\wedge 
d\bar{z}^j$. 
Since $\bar{Q}_{ij}$ is constant on $M_x$, 
it is a Ricci-flat K\"ahler metric. 
Moreover 
\begin{align*}
\hat{\omega}_x + 4\sqrt{-1}\del\delb F
=\sqrt{-1}Q^0_{ij}(x,\theta)dz^i\wedge 
d\bar{z}^j
\end{align*}
is also a 
Ricci-flat K\"ahler metric 
since $\det Q^0$ is constant. 
By the uniqueness of the Ricci-flat K\"ahler metric 
in the fixed K\"ahler class, 
we obtain $Q^0_{ij} = \bar{Q}_{ij}$.
\end{proof}

\subsection{Convergence}\label{subsec conv}
Set 
\begin{align*}
U_{\Phi,r} &:= B_r\times (\R^n/{\rm Ker}\,\Phi)
=p_\Phi^{-1}(U_r),\\
S_r &:= S(L|_{U_r},h),\\
S_{\Phi,r} &:= U_{\Phi,r}\times S^1
= p_\Phi^{-1}(S_r)\\
\end{align*}
for $0<r\le R$.

For the brevity, put
\begin{align*}
\tilde{d}_A&:= 
\mbox{the Riemannian distance of }
\hat{g}_A\mbox{ on }S_{\Phi,R},\\
\hat{g}_J&:=\hat{g}(L,J,h,\sigma,\nabla),\\
d_J &:= 
\mbox{the Riemannian distance of }
\hat{g}_J
\mbox{ on }S(L,h),\\
d_A&:= 
\mbox{the Riemannian distance of }
\hat{g}_J|_{S(L|_U,h)}
\mbox{ on }S(L|_U,h),
\end{align*}
then 
\begin{align*}
d_A(p_\Phi(u),p_\Phi(v))
&= \inf_{k=0,1,\ldots,m-1}\tilde{d}_A(k\cdot u,v),\\
d_J(p_\Phi(u),p_\Phi(v)) &\le 
d_A(p_\Phi(u),p_\Phi(v))
\end{align*}
hold for all $u,v\in S_{\Phi,R}$.

Denote by $B_{g_J}(p,r)$ the 
geodesic ball in $(X,g_J)$ of radius $r$ centered at $p$, 
and denote by 
$B_{g_A}(p,r)$ the 
geodesic ball in $(U,g_A)$. 
Put 
\begin{align*}
\mathbf{0}:=(0,0)\in U,
\end{align*}
and 
\begin{align*}
B_{d_A}(r) 
&:= \{ p\in S(L|_U,h);\, 
d_A(p_\Phi(u_0),p)<r\},\\
B_{d_J}(r) 
&:= \{ u\in X;\, 
d_J(p_\Phi(u_0),u)<r\}.
\end{align*}

The the connection metric $\hat{g}_A$ 
given in Proposition 
\ref{description of connection metric} 
is written as 
\begin{align*}
\hat{g}_A =\sigma (dt-x_id\theta^i)^2
+ \Theta_{ij} d\theta^i d\theta^j
-2 P_{ik}Q^{jk} d\theta^i dx_j
+ Q^{ij} dx_i dx_j.
\end{align*}

\begin{prop}
\mbox{}

\begin{itemize}
\setlength{\parskip}{0cm}
\setlength{\itemsep}{0cm}
 \item[$({\rm i})$] $B_{g_J}\left(\mathbf{0}, \sqrt{\inf(\Theta^{-1})}R'
\right)\subset U_{R'}$ holds 
for any $0<R'\le\frac{R}{2}$.
 \item[$({\rm ii})$] Take $R_0>0$ such that 
\begin{align*}
2\left( 1+\frac{2\sqrt{\sup(\Theta^{-1})}}{\sqrt{\inf(\Theta^{-1})}}\right)R_0
+ \frac{\sqrt{n\sup\Theta} +2\sqrt{\sigma}\pi}{\sqrt{\inf(\Theta^{-1})}}
\le R.
\end{align*}
Then 
$d_J(p,p') = d_A(p,p')$ 
holds for any $p,p'\in S_{R_0}$. 
 \item[$({\rm iii})$] Assume that $\{ J_s\}_s$ satisfies $\spadesuit$. 
Then there are constants $s_0>0$, $0<R_0< \frac{R}{2}$ and $C>0$ such that 
\begin{align*}
B_{g_{J_s}}\left( \mathbf{0},\frac{CR'}{\sqrt{s}}
\right)\subset U_{R'}, \quad
d_{J_s}|_{S_{R_0}}= d_{A(s,\cdot)}|_{S_{R_0}}
\end{align*}
hold for 
any $0<s\le s_0$ and $0<R'\le \frac{R}{2}$. 
\end{itemize}
\label{metric ball and Euclidean ball}
\end{prop}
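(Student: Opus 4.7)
The plan is to combine the distance estimates of Proposition \ref{estimate on distance} with first-exit-time arguments for (i) and (ii), then deduce (iii) by tracking how $\Theta_s$ and its extremal eigenvalues scale under condition $\spadesuit$.

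For (i), I would argue the contrapositive: if $p \in X$ does not lie in $U_{R'}$, then every piecewise smooth $c\colon [0,1] \to X$ from $\mathbf{0}$ to $p$ has a first exit time $\tau_0 := \inf\{\tau : c(\tau) \notin U_{R'}\}$, at which $\|x(c(\tau_0))\| = R'$ and $c|_{[0,\tau_0]}$ stays inside $U_{R'} \subset U_{R/2}$. The identity \eqref{principal metric eq} shows that $g_A$ pointwise dominates ${}^t dx \cdot \Theta^{-1} \cdot dx$, so
\begin{align*}
\mathcal{L}(c) \ge \int_0^{\tau_0} \sqrt{{}^t x'(\tau) \cdot \Theta^{-1} \cdot x'(\tau)}\,d\tau \ge \sqrt{\inf(\Theta^{-1})}\,R',
\end{align*}
and taking the infimum over $c$ gives (i).

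For (ii), the inequality $d_J \le d_A$ is immediate. For the reverse, combining the upper bound of Proposition \ref{estimate on distance} applied to the projections with a single $S^1$-fiber segment of length at most $\sqrt{\sigma}\pi$ yields
\begin{align*}
d_A(p,p') \le 2R_0 \sqrt{\sup(\Theta^{-1})} + \tfrac{1}{2}\sqrt{n\sup\Theta} + \sqrt{\sigma}\pi
\end{align*}
for $p, p' \in S_{R_0}$. Now for any path $c$ in $S(L,h)$ from $p$ to $p'$, suppose toward contradiction that $c$ exits $S_{R/2}$. Letting $\tau_1$ be the first time and $\tau_2$ the last time its projection in $X$ meets $\{\|x\|=R/2\}$, and applying the lower bound of Proposition \ref{estimate on distance} to the projections of $c|_{[0,\tau_1]}$ and $c|_{[\tau_2,1]}$, whose endpoints in $\overline{U_{R/2}}$ are $\|x\|$-separated by at least $R/2 - R_0$, gives
\begin{align*}
\mathcal{L}(c) \ge 2\sqrt{\inf(\Theta^{-1})}(R/2 - R_0) = \sqrt{\inf(\Theta^{-1})}(R - 2R_0).
\end{align*}
The hypothesis on $R_0$ is precisely what makes the right-hand side exceed the upper bound on $d_A(p,p')$ above, contradicting that $c$ can be near-minimizing. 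Hence minimizing sequences stay in $S_{R/2} \subset S(L|_U, h)$, giving $d_J(p,p') \ge d_A(p,p')$.

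For (iii), condition $\spadesuit$ gives $A_s = sA^0 + \mathcal{O}_{C^2}(s^2)$ with $A^0 = P^0 + \sqrt{-1}Q^0$ and $Q^0$ positive-definite (and $\theta$-independent by Proposition \ref{lower bdd of ricci}). Hence $\Theta_s = sQ^0 + \mathcal{O}(s^2)$ and $\Theta_s^{-1} = s^{-1}(Q^0)^{-1} + \mathcal{O}(1)$; in particular $\sqrt{\inf(\Theta_s^{-1})} = (1+o(1))\sqrt{\inf((Q^0)^{-1})}/\sqrt{s}$, the ratio $\sqrt{\sup(\Theta_s^{-1})/\inf(\Theta_s^{-1})}$ is uniformly bounded, and $\sqrt{n \sup \Theta_s} = \mathcal{O}(\sqrt{s})$. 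Substituting into (i) yields $B_{g_{J_s}}(\mathbf{0}, C R'/\sqrt{s}) \subset U_{R'}$ with, e.g., $C := \tfrac12 \sqrt{\inf((Q^0)^{-1})}$ for $s$ small enough. Substituting into the hypothesis of (ii), the $R_0$-coefficient is uniformly bounded and the remaining term is $o(1)$, so a single fixed $R_0 < R/2$ satisfies the inequality for all $s \in (0, s_0]$, and (i), (ii) then apply to $J_s$. The main obstacle is (ii): carefully executing the first-exit/last-entry splitting of the candidate path in the total space $S(L,h)$, descending to $X$ by projection to apply Proposition \ref{estimate on distance} to the projected sub-paths, and bookkeeping the fiber-direction contribution $\sqrt{\sigma}\pi$ to match the exact form of the hypothesis on $R_0$; parts (i) and (iii) then follow by the same lower-bound estimate and asymptotic expansion of $\Theta_s$.
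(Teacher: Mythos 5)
Your proof takes essentially the same route as the paper: (i) by first-exit-time plus the lower bound of Proposition \ref{estimate on distance}, (ii) by showing that any near-minimizing path in $S(L,h)$ between points of $S_{R_0}$ cannot leave $S_{R/2}$ once one compares the length lower bound against the explicit upper bound on $d_A$, and (iii) by tracking the $s$-scaling of the eigenvalues of $\Theta$. In (ii) you split at both the first-exit and last-entry times, which gives a lower bound $\sqrt{\inf(\Theta^{-1})}(R-2R_0)$ that is twice the paper's $\sqrt{\inf(\Theta^{-1})}(R/2-R_0)$; the paper's single-exit version suffices under the stated hypothesis, but your stronger bound also works, so this is a harmless refinement.

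There is, however, a small algebraic slip in (iii). You write $\Theta_s = sQ^0 + \mathcal{O}(s^2)$, hence $\Theta_s^{-1} = s^{-1}(Q^0)^{-1} + \mathcal{O}(1)$ and propose $C := \tfrac12\sqrt{\inf((Q^0)^{-1})}$. Since $\Theta = Q + PQ^{-1}P$ and under $\spadesuit$ both $Q=sQ^0 + \mathcal{O}(s^2)$ and $P = sP^0 + \mathcal{O}(s^2)$, one actually gets $\Theta_s = s\Theta^0 + \mathcal{O}(s^2)$ with $\Theta^0 = Q^0 + P^0(Q^0)^{-1}P^0$, not $Q^0$. Because $\Theta^0 \ge Q^0$, we have $(\Theta^0)^{-1}\le (Q^0)^{-1}$ and hence $\inf((\Theta^0)^{-1})\le \inf((Q^0)^{-1})$, so your constant $C$ could be strictly too large, breaking the inclusion $B_{g_{J_s}}(\mathbf{0},CR'/\sqrt{s})\subset U_{R'}$. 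The fix is simply to replace $Q^0$ by $\Theta^0$ throughout the expansion of $\Theta_s$ (equivalently, take $C := \inf_{0<s\le s_0}\sqrt{s\inf(\Theta^{-1})}$ as in the paper). With that correction, your (iii) matches the paper's argument.
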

\begin{proof}
$({\rm i})$ 
Let $p\in B_{g_J}\left(\mathbf{0}, \sqrt{\inf(\Theta^{-1})}R'
\right)$ and 
suppose $p\notin U_{R'}$. 
Then there is a piecewise smooth path $c_1\colon [0,1]\to X$ 
such that $c_1(0)=\mathbf{0}$, $c_1(1)=p$ and the length 
$\mathcal{L}(c_1)$ is less than 
$\sqrt{\inf(\Theta^{-1})}R'$. 
Let 
\begin{align*}
\tau_1:=\inf\{ \tau\in [0,1];\, c_1(\tau)
\notin U_{R'}\} \le 1.
\end{align*}
Then by the first inequality of Proposition 
\ref{estimate on distance}, 
\begin{align*}
\mathcal{L}(c_1)\ge \mathcal{L}(c_1|_{[0,\tau_1]})
\ge d_{g_J}(\mathbf{0},c_1(\tau_1)) \ge \sqrt{\inf(\Theta^{-1})}R'
\end{align*}
holds, hence we have the contradiction. 

$({\rm ii})$ 
Take 
$R_0>0$ which satisfies the assumption. 
Let $p,p'\in S_{R_0}$ and 
suppose $d_J(p,p') < d_A(p,p')$. 
Then there is a piecewise smooth path 
$c_2\colon [0,1]\to S(L,h)$ connecting $p$ and $p'$ 
such that ${\rm Im}(c_2)$ is not contained in $S_{\frac{R}{2}}$ 
and $\mathcal{L}(c_2)$ is less than 
$d_A(p,p')$. 
Put 
\begin{align*}
\tau_2:=\inf\{ \tau\in [0,1];\, c_2(\tau)
\notin S_{\frac{R}{2}}\}, 
\end{align*}
then 
\begin{align*}
\mathcal{L}(c_2)\ge \mathcal{L}(c_2|_{[0,\tau_2]})
\ge d_J(c_2(0),c_2(\tau_2)) 
\end{align*}
holds. 
Since $\pi\colon (S(L,h),\hat{g}_J)\to 
(X,g_J)$ is a Riemannian submersion, 
\begin{align*}
d_J(c_2(0),c_2(\tau_2)) 
\ge d_{g_J}(\pi(c_2(0)),\pi(c_2(\tau_2))) 
\end{align*}
holds, then we can see 
\begin{align*}
\mathcal{L}(c_2)
\ge d_{g_J}(\pi(c_2(0)),\pi(c_2(\tau_2)))
\ge \sqrt{\inf(\Theta^{-1})}
\left( \frac{R}{2} - R_0\right),
\end{align*}
by the first inequality of 
Proposition \ref{estimate on distance}.
The second inequality of 
Proposition \ref{estimate on distance} gives 
\begin{align*}
\sqrt{\inf(\Theta^{-1})}
\left( \frac{R}{2} - R_0\right) &< 
d_A(p,p') \\
&\le 
2\sqrt{\sup (\Theta^{-1})}R_0
+ \frac{\sqrt{n\sup \Theta}}{2} + 
\sqrt{\sigma} \pi,
\end{align*}
therefore we obtain 
\begin{align*}
\frac{R}{2} 
< \left( 1+  
\frac{2\sqrt{\sup (\Theta^{-1})}}
{\sqrt{\inf(\Theta^{-1})}}\right) R_0
 + \frac{1}{\sqrt{\inf(\Theta^{-1})}}
\left( \frac{\sqrt{n\sup \Theta}}{2}  + \sqrt{\sigma}\pi\right),
\end{align*}
which contradicts the assumption.  

$({\rm iii})$ Since we have 
\begin{align*}
\sqrt{\inf(\Theta^{-1})}
&= \frac{1}{\sqrt{s}}\left( \sqrt{\inf((\Theta^0)^{-1})}+\mathcal{O}(s)\right),\\
\sqrt{\sup(\Theta^{-1})}
&= \frac{1}{\sqrt{s}}\left( \sqrt{\sup((\Theta^0)^{-1})}+\mathcal{O}(s)\right),\\
\sqrt{\sup(\Theta)}
&= \sqrt{s}\left( \sqrt{\sup (\Theta^0)}+\mathcal{O}(s)\right)
\end{align*}
by the Hoffman-Wielandt's inequality 
\cite{HoffmanWielandt1953}, 
there exists $s_0>0$ such that 
\begin{align*}
\frac{2\sqrt{\sup(\Theta^{-1})}}{\sqrt{\inf(\Theta^{-1})}}
&\le \frac{3\sqrt{\sup((\Theta^0)^{-1})}}{\sqrt{\inf((\Theta^0)^{-1})}},\\
\frac{\sqrt{n\sup\Theta} +2\sqrt{\sigma}\pi}{\sqrt{\inf(\Theta^{-1})}}
&\le 
\frac{R}{10}
\end{align*}
for all $s\le s_0$. 
If we take $0<R_0<\frac{R}{2}$ such that 
\begin{align*}
2\left( 1+\frac{3\sqrt{\sup((\Theta^0)^{-1})}}{\sqrt{\inf((\Theta^0)^{-1})}}\right) R_0
\le \frac{9R}{10},
\end{align*}
then the assumption of $({\rm ii})$ is satisfied for $s\le s_0$, 
hence we have $d_{J_s}|_{S_{R_0}}= d_{A(s,\cdot)}|_{S_{R_0}}$. 
Moreover, if we put 
\begin{align*}
C:=\inf_{0<s\le s_0} 
\sqrt{s\inf(\Theta^{-1})}>0,
\end{align*}
then we can see
\begin{align*}
\sqrt{\inf(\Theta^{-1})}R'\ge \frac{CR'}{\sqrt{s}}
\end{align*}
for $R'\le \frac{R}{2}$, hence 
we have 
$B_{g_{J_s}}\left( \mathbf{0},\frac{CR'}{\sqrt{s}}
\right)\subset U_{R'}$ by $({\rm i})$. 
\end{proof}

Next we consider $\omega$-compatible 
complex structures $J,J'$, and 
compare the Riemannian 
distances of $g_J$ and $g_{J'}$. 
We will show that if $g_J$ and $g_{J'}$ 
are close to each other in some sense 
then their Riemannian 
distances are also close to each other.

Now, we define 
the distance $d_{{\rm Sym}^+(\R^N)}$ on 
\begin{align*}
{\rm Sym}^+(\R^N):= 
\{ g\in M_N(\R);\, g_{ij}=g_{ji},\, g>0\}
\end{align*}
as follows. 
For $g\in {\rm Sym}^+(\R^N)$, take 
$v_1,\ldots,v_N\in\R^N$ such that 
$g(v_i,v_j)=\delta_{ij}$. 
For $g'\in {\rm Sym}^+(\R^N)$ let 
$\lambda_1,\ldots,\lambda_N\in\R$ be 
eigenvalues of $(g'(v_i,v_j))_{i,j}$. 
Then define 
\begin{align*}
d_{{\rm Sym}^+(\R^N)}(g,g')
:= \max_{i}| \log \lambda_i |. 
\end{align*}
Moreover, if $g,g'$ are Riemannian metrics 
on $M$, then define 
\begin{align*}
d_{{\rm Sym}^+(M)}(g,g')
:= \sup_{x\in M} d_{{\rm Sym}^+(T_xM)}(g_x,g_x').
\end{align*}

\begin{lem}
Let $M$ be a smooth manifold of dimension $N$, 
$g,g'$ be Riemannian metrics on $M$ and 
$d,d'$ be the Riemannian distances of $g,g'$, 
respectively. 
If we assume 
$d_{{\rm Sym}^+(M)}(g,g')\le 2\log 2$,  
then 
\begin{align*}
| d(p_0,p_1) - d'(p_0,p_1) |
\le d_{{\rm Sym}^+(M)}(g,g')
d'(p_0,p_1) 
\end{align*}
holds. 
Moreover, for any $f\in C_0(M)$ 
\begin{align*}
\left| \int_M f d\mu_g - \int_M f d\mu_{g'}\right|
\le N \sup |f|\cdot \mu_{g'}({\rm supp}(f))\cdot d_{{\rm Sym}^+(M)}(g,g')
\end{align*}
holds 
if $d_{{\rm Sym}^+(M)}(g,g')\le \frac{\log 2}{N}$.
\label{estimate on distance 2}
\end{lem}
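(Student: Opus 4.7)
The plan is to reduce both inequalities to a pointwise comparison of the two metrics. At each $x\in M$, choose a $g_x$-orthonormal basis $\{v_i\}$; by definition of $d_{{\rm Sym}^+(M)}$, the eigenvalues $\lambda_1(x),\ldots,\lambda_N(x)$ of $(g'(v_i,v_j))_{ij}$ satisfy $|\log \lambda_i(x)|\le \delta$, where $\delta := d_{{\rm Sym}^+(M)}(g,g')$. Equivalently, as symmetric bilinear forms one has $e^{-\delta}g_x \le g'_x \le e^{\delta}g_x$, and in particular $e^{-\delta/2}|v|_g \le |v|_{g'} \le e^{\delta/2}|v|_g$ for every $v\in T_xM$.

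For the first inequality, I would apply this pointwise bound to the tangent vectors of a piecewise smooth curve $\gamma$ to obtain $e^{-\delta/2}L_g(\gamma)\le L_{g'}(\gamma)\le e^{\delta/2}L_g(\gamma)$, then take infima over curves joining $p_0,p_1$ to get $e^{-\delta/2}d(p_0,p_1)\le d'(p_0,p_1)\le e^{\delta/2}d(p_0,p_1)$. Rearranging yields $|d(p_0,p_1)-d'(p_0,p_1)|\le (e^{\delta/2}-1)d'(p_0,p_1)$, so it remains to verify the elementary scalar inequality $e^{\delta/2}-1\le \delta$ on $[0,2\log 2]$. This follows because $h(\delta):=\delta-e^{\delta/2}+1$ satisfies $h(0)=0$ and $h'(\delta)=1-e^{\delta/2}/2\ge 0$ precisely on this interval.

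For the measure inequality, in the $g$-orthonormal frame diagonalizing $g'$ the volume densities compare as $d\mu_{g'}/d\mu_g=\sqrt{\lambda_1\cdots\lambda_N}$ pointwise, so $|\log(d\mu_g/d\mu_{g'})|\le N\delta/2$ and therefore $|d\mu_g/d\mu_{g'}-1|\le e^{N\delta/2}-1$ on $\mathrm{supp}(f)$. Hence
\begin{align*}
\left| \int_M f\, d\mu_g - \int_M f\, d\mu_{g'}\right|
&= \left| \int_M f\bigl( d\mu_g/d\mu_{g'} - 1\bigr) d\mu_{g'}\right|\\
&\le \sup|f|\cdot \mu_{g'}(\mathrm{supp}(f))\cdot (e^{N\delta/2}-1).
\end{align*}
The hypothesis $\delta\le (\log 2)/N$ gives $N\delta/2\le (\log 2)/2$, and on this range the convexity check $u\mapsto 2u-(e^u-1)$ has nonnegative derivative, which upgrades $e^{N\delta/2}-1$ to $N\delta$ and yields the claimed bound.

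There is no real obstacle: the content of the lemma is just the observation that pointwise closeness in $d_{{\rm Sym}^+}$ forces multiplicative closeness of both line elements and volume densities, and the only mildly delicate step is the two scalar convexity inequalities that convert the exponential bounds $e^{\delta/2}-1$ and $e^{N\delta/2}-1$ into the linear ones $\delta$ and $N\delta$; the numerical hypotheses on $\delta$ are chosen exactly to make these sharp.
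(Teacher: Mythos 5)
Your proof is correct and follows essentially the same approach as the paper: pointwise eigenvalue bounds give multiplicative control of line elements, hence of curve lengths and distances, and the corresponding control of the volume density gives the integral estimate, with elementary scalar convexity inequalities used at the end to convert the exponential bounds into linear ones under the stated hypotheses on $\delta$. One minor improvement in your write-up: you correctly track the square root in the volume-density ratio and obtain $|\log(d\mu_g/d\mu_{g'})|\le N\delta/2$, which is sharper and cleaner than the paper's intermediate bound $|\log(\det g/\det g')|\le N\delta$; both then reach the claimed estimate $N\delta$ via the same observation $e^u-1\le 2u$ on $[0,\log 2]$.
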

\begin{proof}
Let $c\colon [a,b]\to M$ be a 
piecewise smooth path, and 
denote by $\mathcal{L}_g(c)$ be the length 
of $c$ with respect to $g$. 
Since we have 
\begin{align*}
g(c'(t),c'(t)) &\le \exp\left( 
d_{{\rm Sym}^+(T_{c(t)}M)}(g_{c(t)},g'_{c(t)})\right)g'(c'(t),c'(t))\\
&\le \exp\left( 
d_{{\rm Sym}^+(M)}(g,g')\right)g'(c'(t),c'(t))
\end{align*}
then we can see 
\begin{align*}
\mathcal{L}_g(c) \le 
\exp\left( 
\frac{d_{{\rm Sym}^+(M)}(g,g')}{2}\right)\mathcal{L}_{g'}(c)
\end{align*}
and 
\begin{align*}
d(p_0,p_1) \le 
\exp\left( 
\frac{d_{{\rm Sym}^+(M)}(g,g')}{2}\right)
d'(p_0,p_1).
\end{align*}
By the symmetry we also have 
\begin{align*}
\exp\left( 
-\frac{d_{{\rm Sym}^+(M)}(g,g')}{2}\right)
d'(p_0,p_1) \le d(p_0,p_1).
\end{align*}
Therefore, we obtain 
\begin{align*}
d'(p_0,p_1)-d(p_0,p_1) \le \left( 1
- \exp\left( 
- \frac{d_{{\rm Sym}^+(M)}(g,g')}{2}\right)\right)
d'(p_0,p_1)
\end{align*}
and 
\begin{align*}
d(p_0,p_1)-d'(p_0,p_1) \le \left( \exp\left( 
\frac{d_{{\rm Sym}^+(M)}(g,g')}{2}\right)
- 1 \right)
d'(p_0,p_1).
\end{align*}
Since $1-e^{-\frac{t}{2}}\le t$ and 
$e^{\frac{t}{2}}-1\le t$ holds for any 
$0\le t\le 2\log 2$, we have the first inequality. 

Next we take $f\in C_0(M)$ and 
denote by $d\mu_g$ the Riemannian measure of $g$. 
Then we have 
\begin{align*}
\left| \int_M f d\mu_g - \int_M f d\mu_{g'}\right|
\le \int_M |f| \left| \frac{\det{g}}{\det{g'}} - 1 \right|d\mu_{g'}. 
\end{align*}
Since $|\log \frac{\det{g}}{\det{g'}} | \le Nd_{{\rm Sym}^+(M)}(g,g')$ holds 
and $|e^t-1|\le 2|t|$ holds for $|t|\le \log 2$, 
we can see 
\begin{align*}
\left| \int_M f d\mu_g - \int_M f d\mu_{g'}\right|
\le N \sup |f|\cdot \mu_{g'}({\rm supp}(f))\cdot d_{{\rm Sym}^+(M)}(g,g')
\end{align*}
if $d_{{\rm Sym}^+(M)}(g,g')\le \frac{\log 2}{N}$.
\end{proof}

\begin{lem}
Let $g,g'\in {\rm Sym}^+(\R^N)$ and 
$\{ v_1,\cdots,v_N\}$ be a basis of $\R^N$. 
Put $\mathbf{g}=(g(v_i,v_j))_{i,j}$ and 
$\mathbf{g}'=(g'(v_i,v_j))_{i,j}$. 
Denote by $\alpha_1,\ldots,\alpha_N$ be 
the eigenvalues of $\mathbf{g}'\mathbf{g}^{-1}$. 
Then $\alpha_i\in\R$ and 
$d_{{\rm Sym}^+(\R^N)}(g,g')
=\max_i |\log \alpha_i |$. 
\label{eigen distance}
\end{lem}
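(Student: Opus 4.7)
The plan is to compare the matrix $\mathbf{g}'\mathbf{g}^{-1}$ computed in the arbitrary basis $\{v_1,\ldots,v_N\}$ with the matrix arising in the definition of $d_{{\rm Sym}^+(\R^N)}$, which uses a $g$-orthonormal basis. The key observation is that $\mathbf{g}'\mathbf{g}^{-1}$ turns out to be similar (conjugate) to the symmetric matrix used in the definition, so its eigenvalues are real and coincide with those $\lambda_i$.

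First I would fix a $g$-orthonormal basis $\{w_1,\ldots,w_N\}$, i.e. $g(w_i,w_j)=\delta_{ij}$, and set $\tilde{\mathbf{g}}':=(g'(w_i,w_j))_{i,j}$. By definition $d_{{\rm Sym}^+(\R^N)}(g,g')=\max_i|\log\lambda_i|$, where $\lambda_1,\ldots,\lambda_N$ are the eigenvalues of $\tilde{\mathbf{g}}'$; these are real because $\tilde{\mathbf{g}}'$ is symmetric. (One should note in passing that this definition is independent of the choice of $g$-orthonormal basis, since a different choice conjugates $\tilde{\mathbf{g}}'$ by an orthogonal matrix.)

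Next I would write the change of basis matrix $T\in GL_N(\R)$ defined by $v_i=\sum_k T_{ki}w_k$. A direct computation gives
\begin{align*}
\mathbf{g}_{ij}&=g(v_i,v_j)=\sum_{k,l}T_{ki}T_{lj}\,g(w_k,w_l)=(T^{\top}T)_{ij},\\
\mathbf{g}'_{ij}&=g'(v_i,v_j)=\sum_{k,l}T_{ki}T_{lj}\,g'(w_k,w_l)=(T^{\top}\tilde{\mathbf{g}}'T)_{ij}.
\end{align*}
Hence
\begin{align*}
\mathbf{g}'\mathbf{g}^{-1}=T^{\top}\tilde{\mathbf{g}}'T\,(T^{\top}T)^{-1}=T^{\top}\tilde{\mathbf{g}}'(T^{\top})^{-1},
\end{align*}
so $\mathbf{g}'\mathbf{g}^{-1}$ is similar to $\tilde{\mathbf{g}}'$.

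Finally, I would conclude: similarity preserves the spectrum, so the eigenvalues $\alpha_1,\ldots,\alpha_N$ of $\mathbf{g}'\mathbf{g}^{-1}$ equal $\lambda_1,\ldots,\lambda_N$ up to reordering; in particular each $\alpha_i$ is real and $\max_i|\log\alpha_i|=\max_i|\log\lambda_i|=d_{{\rm Sym}^+(\R^N)}(g,g')$. There is really no obstacle here—the only mildly subtle point is noticing that although $\mathbf{g}'\mathbf{g}^{-1}$ is not itself symmetric in general, the conjugation by $T^{\top}$ exhibits it as similar to the symmetric matrix $\tilde{\mathbf{g}}'$, which is what forces the eigenvalues to be real.
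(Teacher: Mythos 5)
Your proof is correct and follows essentially the same approach as the paper: exhibit $\mathbf{g}'\mathbf{g}^{-1}$ as conjugate to the symmetric matrix appearing in the definition of $d_{{\rm Sym}^+(\R^N)}$. The only cosmetic difference is that the paper takes $T=\sqrt{\mathbf{g}}$ (so $T^{\top}=T$) to build its $g$-orthonormal basis, whereas you allow an arbitrary change-of-basis matrix $T$ with $T^{\top}T=\mathbf{g}$.
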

\begin{proof}
Let $\sqrt{\mathbf{g}}$ be the square root 
of $\mathbf{g}$. 
If we put 
$e_i=\sum_j{\sqrt{\mathbf{g}}^{-1}}_{ij}v_j$, 
then $e_1,\cdots,e_N$ is an orthonormal basis 
of $(\R^N,g)$, therefore we have 
\begin{align*}
d_{{\rm Sym}^+(\R^N)}(g,g') 
= \max_i|\log \lambda_i|,
\end{align*}
where 
$\lambda_i$ are the eigenvalues of 
\begin{align*}
(g'(e_i,e_j))_{ij}=\sqrt{\mathbf{g}}^{-1}\mathbf{g}'\sqrt{\mathbf{g}}^{-1}.
\end{align*}
Since we have 
\begin{align*}
\sqrt{\mathbf{g}}^{-1}\cdot 
\left( \mathbf{g}'\mathbf{g}^{-1}\right)\cdot 
\sqrt{\mathbf{g}}
= \sqrt{\mathbf{g}}^{-1}\mathbf{g}'\sqrt{\mathbf{g}}^{-1}, 
\end{align*}
$\{ \alpha_1,\ldots,\alpha_N\}=
\{ \lambda_1,\ldots,\lambda_N\}$ holds. 
\end{proof}

Suppose that $X_0$ 
is a strict $m$-BS fiber and fix a small $s>0$ and a frame 
\begin{align*}
dt-x_id\theta^i,\,\sqrt{s}d\theta^1,
\ldots,\sqrt{s}d\theta^n,\,
\frac{1}{\sqrt{s}}dx_1,\ldots,\frac{1}{\sqrt{s}}dx_n
\end{align*}
of $T^*(U_\Phi\times S^1)$. 
Then the matrix representation 
of $\hat{g}_A$ 
is given by 
\[ \mathbf{g}_A:=
\left (
\begin{array}{ccc}
\sigma & 0 & 0 \\
0 & s^{-1}\Theta & -PQ^{-1} \\
0 & -Q^{-1}P & s Q^{-1}
\end{array}
\right ), 
\]
and its inverse is 
\[ \mathbf{g}^{-1}_A=
\left (
\begin{array}{ccc}
\sigma^{-1} & 0 & 0 \\
0 & s Q^{-1} & Q^{-1}P \\
0 & PQ^{-1} & s^{-1}\Theta
\end{array}
\right ).
\]

Suppose that $\{ A(s,\cdot)\}_s$ 
corresponds to $\{ J_s\}$ which satisfies 
$\spadesuit$. 
Fix $r\ge 1$. 
Then there is a constant $K>0$ 
depending only on $\{ A(s,\cdot)\}_s$ 
such that 
\begin{align*}
| A(s,x,\theta) - sA^0(x,\theta)| &\le 
Ks^2\\
| A^0(x,\theta) - A^0(0,\theta)|&\le K\| x\|
\end{align*}
for any $(x,\theta)\in U_\Phi$. 
If $(x,\theta)\in U_{\Phi,\sqrt{s}r}$ 
for $r\ge 1$ and $s>0$ with $\sqrt{s}r\le R$, 
then we have $s\le \sqrt{s}\frac{R}{r}\le
\sqrt{s}r$ since $R\le 1$, hence we obtain 
\begin{align*}
\left| s^{-1}A(s,x,\theta) - A^0(0,\theta)\right| &\le 
K\sqrt{s}r.
\end{align*}
Here we write 
\begin{align*}
f_0(s,x,\theta) \cong_{\sqrt{s}r} f_1(s,x,\theta)
\end{align*}
if there is a constant $K>0$ such that 
$|f_0(s,x,\theta) - f_1(s,x,\theta)|\le 
K\sqrt{s}r$ holds for any $(x,\theta)\in U_{\Phi,\sqrt{s}r}$.

Now $A'(s,x,\theta):=sA^0(0,\theta)$ gives 
another family of complex structures 
$\{ J'_s\}_s$ which satisfies $\spadesuit$, 
by $({\rm i})$ of 
Proposition \ref{lower bdd of ricci}. 
Since we have  
\begin{align*}
s^{-1}\Theta 
&\cong_{\sqrt{s}r} \Theta^0(0,\theta),\\
PQ^{-1}
&\cong_{\sqrt{s}r} P^0(0,\theta)Q^0(0,\theta)^{-1},\\
Q^{-1}P
&\cong_{\sqrt{s}r} Q^0(0,\theta)^{-1}P^0(0,\theta),\\
sQ^{-1}
&\cong_{\sqrt{s}r} Q^0(0,\theta)^{-1},
\end{align*}
where $\Theta^0(0,\theta)=Q^0(0,\theta)+
P^0(0,\theta)Q^0(0,\theta)^{-1}P^0(0,\theta)$, 
then we obtain 
\begin{align*}
\mathbf{g}^{-1}_{A'}\mathbf{g}_A
\cong_{\sqrt{s}r} I_{2n+1}.
\end{align*}
By Lemma \ref{eigen distance}, 
the eigenvalues of $\mathbf{g}^{-1}_{A'}
\mathbf{g}_A$ are real.
If $1+\lambda\in\R$ is one of the 
eigenvalues, then 
\begin{align*}
f(\lambda) := \det\left( (1+\lambda) I_{2n+1} 
- \mathbf{g}^{-1}_{A'}\mathbf{g}_A\right)
=0
\end{align*}
holds. 
Since we have  
\begin{align*}
f(\lambda) = \det\left\{ \lambda I_{2n+1} 
+ (I_{2n+1} - \mathbf{g}^{-1}_{A'}\mathbf{g}_A)
\right\}, 
\end{align*}
there exists a constant 
$K>0$ depending only on 
$\{ A(s,\cdot)\}_s$, and there 
exist $c_0,c_1,\ldots,c_{2n}\in\R$ 
such that $\max_i|c_i|\le K$ 
and 
\begin{align*}
f(\lambda) = \lambda^{2n+1}
+ \sum_{i=0}^{2n}c_i (\sqrt{s}r)^{2n+1-i}\lambda^{i}.
\end{align*}

\begin{lem}
For any $n\in\Z_{\ge 0}$, 
$K>0$ and $r\ge 1$ there is a sufficiently large 
$N>0$ depending only on $n$ and $K$ 
such that for any 
$c_0,c_1,\ldots,c_{2n}\in[-K,K]$ and 
$\varepsilon>0$, 
the solution $\lambda$ of the equation 
\begin{align*}
f(\lambda) = \lambda^{2n+1}
+ \sum_{i=0}^{2n}c_i \varepsilon^{2n+1-i}\lambda^{i}
= 0
\end{align*}
always satisfies $|\lambda|\le N\varepsilon$. 
\label{polynomial}
\end{lem}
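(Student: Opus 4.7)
The plan is to reduce the statement to a uniform bound on the roots of a monic polynomial of degree $2n+1$ with coefficients in $[-K,K]$ by an elementary rescaling. The crucial observation is that $\varepsilon$ appears homogeneously in the coefficients: the $i$-th term is $c_i\varepsilon^{2n+1-i}\lambda^i$, so the substitution $\lambda = \varepsilon\mu$ makes each term carry exactly the factor $\varepsilon^{2n+1}$. Concretely, I would compute
\begin{align*}
f(\varepsilon\mu) \;=\; \varepsilon^{2n+1}\mu^{2n+1} + \sum_{i=0}^{2n}c_i\varepsilon^{2n+1-i}\varepsilon^i\mu^i \;=\; \varepsilon^{2n+1}\!\left(\mu^{2n+1}+\sum_{i=0}^{2n}c_i\mu^i\right),
\end{align*}
so for $\varepsilon>0$ the equation $f(\lambda)=0$ is equivalent to $g(\mu):=\mu^{2n+1}+\sum_{i=0}^{2n}c_i\mu^i=0$.

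Once this reduction is in place, I would invoke Cauchy's classical root bound for monic polynomials. Namely, if $|\mu|>1$ is a root of $g$, then
\begin{align*}
|\mu|^{2n+1} \;=\; \Bigl|\sum_{i=0}^{2n} c_i\mu^i\Bigr| \;\le\; K\sum_{i=0}^{2n}|\mu|^i \;\le\; K(2n+1)|\mu|^{2n},
\end{align*}
whence $|\mu|\le K(2n+1)$. Combined with the trivial case $|\mu|\le 1$, every root of $g$ lies in the disc of radius $N:=\max\{1,\,K(2n+1)\}$, a quantity depending only on $n$ and $K$. Undoing the rescaling yields $|\lambda|=\varepsilon|\mu|\le N\varepsilon$, which is the desired bound.

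There is essentially no substantive obstacle; the only point requiring minimal attention is verifying that $N$ is genuinely independent of $\varepsilon$, which is immediate from the Cauchy estimate. The parameter $r$ from the surrounding discussion plays no role in the statement itself — it has already been absorbed into the single positive variable $\varepsilon$ (think $\varepsilon = \sqrt{s}\,r$) — and the heart of the argument is simply the observation that the polynomial $f$ is the $\varepsilon$-rescaling of a fixed monic polynomial with bounded coefficients.
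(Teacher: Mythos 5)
Your proof is correct and follows essentially the same route as the paper: rescale by $\lambda = \varepsilon\mu$ to reduce to a fixed monic polynomial with coefficients in $[-K,K]$, then apply the Cauchy bound with a case split on $|\mu|\le 1$ versus $|\mu|>1$, arriving at the same constant $N=\max\{1,(2n+1)K\}$.
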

\begin{proof}
Put $\lambda=\varepsilon t$. 
Then $f(\lambda)=\varepsilon^{2n+1}\left( t^{2n+1}
+ \sum_{i=0}^{2n}c_i t^i\right)$. 
If $f(\lambda) = 0$ then 
we have $t^n=- \sum_{i=0}^{2n}c_i t^i$. 
Suppose $|t|\ge1$. 
Then 
\begin{align*}
|t|^{2n+1}\le \sum_{i=0}^{2n}|c_i||t|^i
\le \sum_{i=0}^{2n}K|t|^{2n}=(2n+1)K|t|^{2n}
\end{align*}
holds, hence $|t|\le (2n+1)K$ is obtained. 
Consequently we can see
$|\lambda|\le \max\{1,(2n+1)K\}\varepsilon$. 
\end{proof}

By Lemma \ref{polynomial} 
we can see $|\log (1+\lambda)|
\le N\sqrt{s}r$ 
for the eigenvalue $1+\lambda$ of 
$\mathbf{g}^{-1}_{A'}\mathbf{g}_A$, 
where $N$ is the constant depending only on 
$K$. 
Therefore, 
we  obtain the following proposition 
by Lemma \ref{eigen distance}. 
\begin{prop}
Let $A,A'$ be as above and 
let $r\ge 1$, $s>0$ with $\sqrt{s}r\le R$. 
Then there exists a constant $C>0$ 
depending only on $A$ such that 
\begin{align*}
d_{{\rm Sym}^+(U_{\Phi,\sqrt{s}r}\times S^1)}
(\hat{g}_{A},\hat{g}_{A'}) \le C\sqrt{s}r.
\end{align*}
\label{estimate on distance 3}
\end{prop}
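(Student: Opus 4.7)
The strategy is essentially to collect the ingredients that have already been assembled in the preceding discussion: the pointwise distance formula from Lemma \ref{eigen distance}, the block-by-block asymptotic relation $\mathbf{g}_{A'}^{-1}\mathbf{g}_A \cong_{\sqrt{s}r} I_{2n+1}$, and the root estimate in Lemma \ref{polynomial}. The proposition is then a matter of threading these together and verifying uniformity of constants.

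First I would fix an arbitrary point $(x,\theta) \in U_{\Phi,\sqrt{s}r}$ and work in the fixed frame $dt - x_i d\theta^i,\ \sqrt{s}\,d\theta^i,\ \tfrac{1}{\sqrt{s}}\,dx_i$ on $T^*(U_\Phi\times S^1)$, in which the matrix representations $\mathbf{g}_A$ and $\mathbf{g}_{A'}$ are those recorded just before the statement. Both metrics are $t$-independent in this frame, so the $S^1$-direction contributes nothing to the supremum. Applying Lemma \ref{eigen distance} with $g=\hat g_{A'}$, $g'=\hat g_A$ and the basis dual to the frame identifies
\begin{align*}
d_{{\rm Sym}^+(T^*_{(x,\theta)}(U_\Phi\times S^1))}(\hat g_A,\hat g_{A'})
= \max_i |\log \alpha_i|,
\end{align*}
where $\alpha_1,\dots,\alpha_{2n+1}$ are the (real) eigenvalues of $\mathbf{g}_{A'}^{-1}\mathbf{g}_A$.

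Next I would use the block-wise asymptotic identities for $\Theta$, $Q^{-1}$ and $PQ^{-1}$ displayed above the statement, together with the $C^2$-bounds $|A(s,x,\theta)-sA^0(x,\theta)|\le Ks^2$ and $|A^0(x,\theta)-A^0(0,\theta)|\le K\|x\|$ coming from $\spadesuit$, to produce a constant $K'>0$ depending only on $\{A(s,\cdot)\}_s$ such that each entry of $\mathbf{g}_{A'}^{-1}\mathbf{g}_A - I_{2n+1}$ is bounded in absolute value by $K'\sqrt{s}r$ throughout $U_{\Phi,\sqrt{s}r}$. Writing $\alpha_i = 1+\lambda_i$, the characteristic polynomial
\begin{align*}
f(\lambda) = \det\bigl((1+\lambda)I_{2n+1} - \mathbf{g}_{A'}^{-1}\mathbf{g}_A\bigr)
= \lambda^{2n+1} + \sum_{i=0}^{2n} c_i (\sqrt{s}r)^{2n+1-i}\lambda^i
\end{align*}
therefore has coefficients $c_i$ with $|c_i|\le K''$ for a constant $K''$ depending only on $A$. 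Lemma \ref{polynomial} then yields $|\lambda_i|\le N\sqrt{s}r$ with $N$ depending only on $n$ and $K''$.

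Finally, after possibly shrinking $s_0$ so that $N\sqrt{s}r \le \tfrac12$ for the relevant range of $s$, the elementary estimate $|\log(1+\lambda)|\le 2|\lambda|$ for $|\lambda|\le \tfrac12$ gives $|\log \alpha_i|\le 2N\sqrt{s}r$ at every $(x,\theta)\in U_{\Phi,\sqrt{s}r}$. Taking the supremum over such points produces the desired inequality with $C := 2N$. The only subtle point — and the place I would be most careful — is verifying that \emph{all} constants ($K$, $K'$, $K''$ and in turn $N$) depend only on the family $\{A(s,\cdot)\}_s$ and are uniform in $(s,x,\theta)$ and $r$. This amounts to chasing the $C^2$-bounds supplied by $\spadesuit$ through the block computations of $\mathbf{g}_{A'}^{-1}\mathbf{g}_A$; once this uniformity is in place the rest is purely algebraic.
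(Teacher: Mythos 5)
Your proof matches the paper's argument essentially step for step: the scaled frame, the reduction via Lemma \ref{eigen distance} to eigenvalues of $\mathbf{g}_{A'}^{-1}\mathbf{g}_A$, the block asymptotics bounding the characteristic polynomial's coefficients, Lemma \ref{polynomial}, and the elementary $\log$ estimate. The only slip is at the end: the proposition has no $s_0$ to shrink — the relevant knob is $R$, which the paper in fact shrinks immediately after the proposition, and that adjustment is what guarantees $N\sqrt{s}r$ stays small enough for $|\log(1+\lambda)|\le 2|\lambda|$ to apply uniformly over the stated range $\sqrt{s}r\le R$.
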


From now on we assume $R>0$ satisfies 
\begin{align*}
CR\le 2\log 2,
\end{align*}
where $C$ is the constant in Proposition 
\ref{estimate on distance 3}. 
Then Lemma \ref{estimate on distance 2} 
holds for 
\begin{align*}
M=S_{\Phi,R},\quad g=\hat{g}_A,\quad g'=\hat{g}_{A'}. 
\end{align*}
and for 
\begin{align*}
M=S_{\Phi,R},\quad g=\hat{g}_{A'},\quad g'=\hat{g}_A. 
\end{align*}

\begin{prop}
Let $\{ J_s\}_s$ satisfy $\spadesuit$ 
and $A'(s,x,\theta):=sA^0(0,\theta)$. 
\begin{itemize}
\setlength{\parskip}{0cm}
\setlength{\itemsep}{0cm}
 \item[$({\rm i})$] There are positive constants 
$C_0',C_1'$ depending only on $A^0(0,\cdot)$ and $\sigma$ 
such that 
\begin{align*}
B_{g_{A'}}(\mathbf{0},C_0'r)
\subset U_{\sqrt{s}r} \subset 
B_{g_{A'}}(\mathbf{0},C_1'r)
\end{align*}
for any $r\ge 1$ and $s>0$ with 
$\sqrt{s}r\le R$. 
 \item[$({\rm ii})$] Suppose 
$X_0$ is a strict $m$-BS fiber. Then there are 
constants $C>0$ and $0<R_0< \frac{R}{2}$ 
depending only on 
$A$ and $\sigma$ such that  
\begin{align*}
|d_{J_s}(p,q)-d_{A'}(p,q)| < C\sqrt{s}r^2
\end{align*}
holds for any $r\ge 1$, $s>0$ with 
$\sqrt{s}r\le R_0$ 
and $p,q\in S_{\sqrt{s}r}$. 
 \item[$({\rm iii})$] There are positive constants 
$C_0,C_1$ and $0<R_0 < \frac{R}{2}$ depending only on $A$ and $\sigma$  
such that 
\begin{align*}
B_{g_{J_s}}(\mathbf{0},C_0r)
\subset U_{\sqrt{s}r} \subset 
B_{g_{J_s}}(\mathbf{0},C_1r)
\end{align*}
for any $r\ge 1$, $s>0$ with $\sqrt{s}r\le R_0$. 
\end{itemize}
\label{estimate on distance 4}
\end{prop}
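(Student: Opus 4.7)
The plan is to prove the three parts in the order (i), (ii), (iii), using the explicit scaling $Q' = sQ^0(0,\theta)$, $\Theta' = s\Theta^0(0,\theta)$ coming from $A' = sA^0(0,\theta)$, together with the tools already established in this subsection.

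For part (i), I would apply Proposition \ref{estimate on distance} directly to $A'$. The scalings give $\sqrt{\inf(\Theta'^{-1})} = s^{-1/2}\sqrt{\inf(\Theta^{0,-1})(0,\cdot)}$, $\sqrt{\sup(\Theta'^{-1})} = s^{-1/2}\sqrt{\sup(\Theta^{0,-1})(0,\cdot)}$ and $\sqrt{\sup \Theta'} = s^{1/2}\sqrt{\sup \Theta^0(0,\cdot)}$. The first inclusion $B_{g_{A'}}(\mathbf{0},C_0'r)\subset U_{\sqrt{s}r}$ is immediate from Proposition \ref{metric ball and Euclidean ball}(i) applied with $R' = \sqrt{s}r$: the radius $\sqrt{\inf(\Theta'^{-1})}R' = \sqrt{\inf(\Theta^{0,-1})(0,\cdot)}\,r$ is independent of $s$, so $C_0' := \sqrt{\inf(\Theta^{0,-1})(0,\cdot)}$ works (adjusting to handle the range $R/2 < \sqrt{s}r \le R$ if necessary). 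For the second inclusion, I would construct the same two-piece path $\mathbf{0}\to(x,0)\to(x,\theta)$ used in the proof of Proposition \ref{estimate on distance}; the two pieces contribute lengths $\sqrt{\sup(\Theta'^{-1})}\|x\|\le \sqrt{\sup(\Theta^{0,-1})(0,\cdot)}\,r$ and $\sqrt{\sup\Theta'}\cdot\tfrac{\sqrt{n}}{2}\le\sqrt{s\sup\Theta^0(0,\cdot)}\cdot\tfrac{\sqrt{n}}{2}\le\tfrac{\sqrt{n\sup\Theta^0(0,\cdot)}}{2}\cdot r$ (the last step uses $s\le R\le 1\le r$), giving $C_1'$.

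For part (ii), the key is to combine Proposition \ref{estimate on distance 3} with Lemma \ref{estimate on distance 2}. That gives, on $S_{\Phi,\sqrt{s}r}$ (once $\sqrt{s}r$ is small enough that $C\sqrt{s}r\le 2\log 2$),
\begin{equation*}
|\tilde{d}_{A}(u,v) - \tilde{d}_{A'}(u,v)| \le C\sqrt{s}r\cdot \tilde{d}_{A'}(u,v).
\end{equation*}
Part (i), together with the bounded diameter of the $S^1$-fiber (contributing $\sqrt{\sigma}\pi$), bounds $\tilde{d}_{A'}(u,v)$ by $C'r$ for $u,v\in S_{\Phi,\sqrt{s}r}$, whence $|\tilde{d}_A-\tilde{d}_{A'}|\le C''\sqrt{s}r^2$. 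Passing to the quotient $p_\Phi$, I would use $d_A(p_\Phi(u),p_\Phi(v))=\inf_{k\in\Z/m\Z}\tilde{d}_A(k\cdot u,v)$ and the analogous formula for $A'$ (the deck action \eqref{deck} preserves the radial coordinate, so $k\cdot u\in S_{\Phi,\sqrt{s}r}$); since $|\inf f-\inf g|\le\sup|f-g|$, the same bound $C''\sqrt{s}r^2$ descends. Finally, choose $R_0$ as in Proposition \ref{metric ball and Euclidean ball}(iii) so that $d_{J_s}=d_{A(s,\cdot)}$ on $S_{R_0}$ for $s\le s_0$; this replaces $d_A$ by $d_{J_s}$ in the bound.

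For part (iii), the lower bound $B_{g_{J_s}}(\mathbf{0},C_0 r)\subset U_{\sqrt{s}r}$ is immediate from Proposition \ref{metric ball and Euclidean ball}(iii) by setting $R'=\sqrt{s}r$ (choosing $R_0\le R/2$ to stay in the valid range), taking $C_0:=C$ from that proposition. For the upper bound $U_{\sqrt{s}r}\subset B_{g_{J_s}}(\mathbf{0},C_1 r)$, I would repeat the two-piece path argument from part (i) but using $g_A=g_{A(s,\cdot)}$: the same scaling estimates apply to $A(s,\cdot)$ itself (since $A(s,\cdot) = sA^0+\mathcal{O}_{C^2}(s^2)$ gives $\sup\Theta\sim s\sup\Theta^0$ and $\sup(\Theta^{-1})\sim s^{-1}\sup(\Theta^{0,-1})$ uniformly in $s\le s_0$), yielding $d_{g_A}(\mathbf{0},(x,\theta))\le C_1 r$; and by the equality $d_{g_{J_s}}=d_{g_A}$ on $U_{R_0}$ (the base-space analogue of Proposition \ref{metric ball and Euclidean ball}(iii), obtained by the identical lower-bound-of-distance argument applied to paths that leave $U_{R/2}$), the same bound holds for $d_{g_{J_s}}$. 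The main technical point is ensuring the constants in parts (i)–(iii) are uniform in $s$; this is delivered by the uniform bounds $\|A_{ij}(s,\cdot)-sA^0_{ij}\|_{C^2}\le Ks^2$ from hypothesis $\spadesuit$.
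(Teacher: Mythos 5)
Your parts (i) and (ii) follow essentially the paper's own route. Two small remarks on (ii): the bound on $\tilde{d}_{A'}(u,v)$ cannot be obtained directly from part (i), since (i) controls $d_{g_{A'}}$ on the base $U$, not the covering distance on $U_\Phi$; distances only increase when passing to the cover, so you must run the two-piece path argument of Proposition \ref{estimate on distance} directly on $U_\Phi$, picking up ${\rm diam}(\R^n/\ker\Phi)$ rather than ${\rm diam}(T^n)$. There is also a small horizontal contribution from the connection form $dt-x_id\theta^i$ along the $\theta$-piece of the path (bounded by $\sqrt{\sigma s}\,r$ since $\|x\|\le\sqrt{s}r$), which the paper tracks explicitly and your sketch folds into the generic $\sqrt{\sigma}\pi$ term; the final order $C\sqrt{s}r^2$ is unchanged, so this is just a matter of bookkeeping.

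For part (iii) your route genuinely differs from the paper's. The paper proves the upper inclusion $U_{\sqrt{s}r}\subset B_{g_{J_s}}(\mathbf{0},C_1r)$ by combining part (i) (which controls $B_{g_{A'}}$) with part (ii) (which transfers the estimate from $d_{A'}$ to $d_{J_s}$ through the Riemannian submersion $\pi$). You instead estimate $d_{g_A}(\mathbf{0},\cdot)$ directly by the two-piece path in $U$, using the uniform scaling $\sup\Theta\sim s\sup\Theta^0$, $\sup(\Theta^{-1})\sim s^{-1}\sup(\Theta^{0,-1})$ supplied by $\spadesuit$, and then invoke the base-space version of the equality $d_{J_s}|_{S_{R_0}}=d_{A(s,\cdot)}|_{S_{R_0}}$ from Proposition \ref{metric ball and Euclidean ball}(iii) (which follows since the bundle projection is a Riemannian submersion and $S^1$ preserves fibers). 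This is an acceptable and arguably cleaner argument: it avoids routing through part (ii) — whose statement carries the strict $m$-BS hypothesis that part (iii) does not formally assume — and works purely with the base metrics $g_A$, $g_{J_s}$. The paper's route is shorter given (ii) already established, while yours is more self-contained.
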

\begin{proof}
$({\rm i})$ Apply Proposition \ref{estimate on distance} 
for $A'$. 
Then there are positive constants $C_2,C_3,C_4$ 
depending only on $A^0(0,\cdot)$ and $\sigma$ 
such that 
\begin{align*}
C_2\sqrt{s}^{-1}\| x\| \le d_{g_{A'}}(\mathbf{0},u) \le C_3\sqrt{s}^{-1}\| x\| + C_4
\end{align*}
for any $u=(x,\theta)$ and $s>0$. 
If $\| x\|<\sqrt{s}r$ then 
\begin{align*}
d_{g_{A'}}(\mathbf{0},u) < C_3 r + C_4 \le (C_3+C_4)r
\end{align*}
holds since $r\ge 1$, which implies 
$U_{\sqrt{s}r}\subset B_{g_{A'}}(\mathbf{0},(C_3+C_4)r)$. 
On the other hand if 
$d_{g_{A'}}(\mathbf{0},u) < C_2r$ holds then 
\begin{align*}
C_2\sqrt{s}^{-1}\| x\| \le d_{g_{A'}}(\mathbf{0},u) <C_2r
\end{align*}
gives $\| x\|<\sqrt{s} r$, hence 
$B_{g_{A'}}(\mathbf{0},C_2r)\subset U_{\sqrt{s}r}$ holds. 

$({\rm ii})$ By applying 
Proposition \ref{estimate on distance 3}, 
there is a constant $C_5>0$ such that 
\begin{align*}
d_{{\rm Sym}^+(S_{\Phi,\sqrt{s}r})}
(\hat{g}_{A},\hat{g}_{A'}) \le C_5\sqrt{s}r
\end{align*}
holds if $\sqrt{s}r\le R$. 
Now take $R_0\le \min\{ \frac{2\log 2}{C_5},R\}$ 
and assume $\sqrt{s}r\le R_0$, 
then we may apply 
Lemma \ref{estimate on distance 2} and we have 
\begin{align*}
| \tilde{d}_A(u,v) - \tilde{d}_{A'}(u,v) |
\le d_{{\rm Sym}^+(M)}(g,g')
\tilde{d}_{A'}(u,v) 
\le C_5\sqrt{s}r \tilde{d}_{A'}(u,v)
\end{align*}
for all $u,v\in S_{\Phi,\sqrt{s}r}$. 
By the same argument in the proof of 
Proposition \ref{estimate on distance}, 
we have the upper estimate 
\begin{align*}
\tilde{d}_{A'}(u,v)
&\le \sqrt{\sup (Q^0)^{-1}}
\frac{\| x-x'\|}{\sqrt{s}}
+\sqrt{s}\sqrt{\sigma r^2+\cdot\sup\Theta^0}\cdot {\rm diam} (\R^n/{\rm Ker}\Phi)\\
&\quad\quad +\sqrt{\sigma}\pi,
\end{align*}
where $u=(x,\theta,e^{\sqrt{-1} t}), 
v=(x',\theta',e^{\sqrt{-1} t'})$. 
Since $\| x-x'\|\le 2\sqrt{s}r$, 
$\sqrt{s}r\le R\le 1$ and $r\ge 1$, 
we have $\frac{\| x-x'\|}{\sqrt{s}}\le 2r$ 
and $s\le \frac{R^2}{r^2}\le 1$, 
then there is a constant $C_6>0$ 
depending only on $A^0,\sigma,\Phi$ 
such that $\tilde{d}_{A'}(u,v)\le 
C_6 r$, 
which gives 
\begin{align*}
| \tilde{d}_A(u,v) - \tilde{d}_{A'}(u,v) |
< 2C_5C_6\sqrt{s}r^2. 
\end{align*}
Therefore, we can see
\begin{align*}
d_A(p_\Phi(u),p_\Phi(v)) &= \inf_{k=0,1,\ldots,m-1}
\tilde{d}_A(k\cdot u,v)\\
&< \inf_{k=0,1,\ldots,m-1}\left\{ 
\tilde{d}_{A'}(k\cdot u,v) + 2C_5C_6\sqrt{s}r^2
\right\}\\
&=d_{A'}(k\cdot u,v) + 2C_5C_6\sqrt{s}r^2
\end{align*}
and similarly $d_{A'}(p_\Phi(u),p_\Phi(v)) <d_A(k\cdot u,v) + 2C_5C_6\sqrt{s}r^2$ is obtained. 
By $({\rm iii})$ of Proposition 
\ref{metric ball and Euclidean ball}, 
we can take $0<R_0'<\frac{R}{2}$ and 
$s_0>0$ such that 
$d_{J_s}|_{S_{R_0'}}=d_A|_{S_{R_0'}}$ holds 
for any $0<s\le s_0$. 
If we put $C= 2C_5C_6$ and 
$R_0=\min\{ \frac{2\log 2}{C_5},R_0',\sqrt{s_0}\}$, 
then $\sqrt{s}r\le R_0$ implies $s\le s_0$, 
hence 
we have $({\rm ii})$. 

$({\rm iii})$ 
Take $C,s_0,R_0$ as in $({\rm iii})$ of 
Proposition 
\ref{metric ball and Euclidean ball} and 
replace $R_0$ by the smaller one such that 
$R_0\le \sqrt{s_0}$. 
Then we have 
$B_{g_{J_s}}(\mathbf{0},Cr) \subset U_{\sqrt{s}r}$ 
if $\sqrt{s}r\le R_0$. 
Next we assume $u\in U_{\sqrt{s}r}$. 
By $({\rm i})$, we have 
$u\in B_{g_{A'}}(\mathbf{0},C_1'r)$. 
Since $\pi\colon (S_R,\hat{g}_{J_s}) \to 
(U_R, g_{J_s})$ and 
$\pi\colon (S_R,\hat{g}_{A'}) \to 
(U_R, g_{A'})$ are Riemannian submersions, 
therefore $({\rm ii})$ gives 
\begin{align*}
d_{g_{J_s}}(\pi(u),\pi(u'))
&=\inf_{e^{\sqrt{-1}t}\in S^1}
d_{J_s}(ue^{\sqrt{-1}t},u')\\
&\le \inf_{e^{\sqrt{-1}t}\in S^1}
d_{A'}(ue^{\sqrt{-1}t},u')+C\sqrt{s}r^2\\
&= d_{g_{A'}}(\pi(u),\pi(u'))+C\sqrt{s}r^2. 
\end{align*}
Consequently we obtain 
\begin{align*}
d_{g_{J_s}}(\mathbf{0},u)
\le d_{g_{A'}}(\mathbf{0},u)+C\sqrt{s}r^2
< C_1'r+CR_0r, 
\end{align*}
which implies 
$U_{\sqrt{s}r}\subset B_{g_{J_s}}(\mathbf{0},C_1r)$ 
by putting $C_1=C_1'+C$. 
\end{proof}

\begin{prop}
Let $\{ J_s\}_s$ satisfy $\spadesuit$ 
and 
$A'(s,x,\theta):=sA^0(0,\theta)$. 
There exist constants $R_0,C>0$ such that 
\begin{align*}
{\rm id}\colon \left( 
\pi^{-1}(B_{g_{J_s}}(\mathbf{0},r-C\sqrt{s}r^2)),d_{J_s}\right) \to 
\left( \pi^{-1}(B_{g_{A'(s,\cdot)}}(\mathbf{0},r)), d_{A'(s,\cdot)}\right)
\end{align*}
is a Borel 
$C\sqrt{s}r^2$-$S^1$-equivariant 
Hausdorff approximation 
for any $r\ge 1$ and $s\le \frac{R_0^2}{Cr^2}$. 
Moreover, if $f\colon S_R\to \R$ is 
a Borel function such that 
${\rm supp}(f)\subset \overline{S_{R'}}$ for 
some $R'\le R$ and $\sup|f|<\infty$, then 
\begin{align*}
\left| \int_{S_R} 
f d\mu_{\hat{g}_{A'}}
- \int_{S_R} 
f d\mu_{\hat{g}_{J_s}} \right|
&\le C \sup|f| (R')^{n+1}
\end{align*}
holds. 
\label{approximation}
\end{prop}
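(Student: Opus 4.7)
The plan is to combine the pointwise metric comparison in Proposition~\ref{estimate on distance 3} with the distance and ball estimates in Proposition~\ref{estimate on distance 4}, and then invoke Lemma~\ref{estimate on distance 2} for the integral statement. First I verify that the identity map is well defined as a map $P' := \pi^{-1}(B_{g_{J_s}}(\mathbf{0}, r - C\sqrt{s}r^2)) \to P := \pi^{-1}(B_{g_{A'(s,\cdot)}}(\mathbf{0}, r))$: since $\pi$ is a Riemannian submersion for both metrics, the distance comparison in Proposition~\ref{estimate on distance 4}(ii) descends to $|d_{g_{J_s}} - d_{g_{A'(s,\cdot)}}| < C\sqrt{s}r^2$ on the appropriate tube in $U$, so $d_{g_{J_s}}(\mathbf{0}, \pi(p)) < r - C\sqrt{s}r^2$ forces $d_{g_{A'(s,\cdot)}}(\mathbf{0}, \pi(p)) < r$. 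The identity is automatically $S^1$-equivariant and Borel because both $\hat{g}_{J_s}$ and $\hat{g}_{A'(s,\cdot)}$ are $S^1$-invariant connection metrics and the underlying $S^1$-action on $S$ is intrinsic.

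The $\varepsilon$-approximation distance bound $|d_{J_s}(p,q) - d_{A'(s,\cdot)}(p,q)| < C\sqrt{s}r^2$ for $p,q \in P'$ follows from Proposition~\ref{estimate on distance 4}(iii), which gives $P' \subset S_{\sqrt{s}r/C_0}$, and then Proposition~\ref{estimate on distance 4}(ii) applied on that $S_{\sqrt{s}r'}$ with $r' = r/C_0$ (any rescaling is absorbed into the constant $C$). For the density condition, given $q \in P$ with $d_{g_{A'(s,\cdot)}}(\mathbf{0}, \pi(q)) < r$, I choose $q^*$ on a minimizing $g_{A'(s,\cdot)}$-geodesic from $\mathbf{0}$ to $\pi(q)$ with $d_{g_{A'(s,\cdot)}}(\mathbf{0}, q^*) = \max\{0, d_{g_{A'(s,\cdot)}}(\mathbf{0}, \pi(q)) - 2C\sqrt{s}r^2\}$, then use the horizontal distribution of $\nabla$ to lift $q^*$ to a point $q' \in S$ with $d_{A'(s,\cdot)}(q, q') \le 2C\sqrt{s}r^2$; the base-distance comparison then yields $d_{g_{J_s}}(\mathbf{0}, \pi(q')) \le r - C\sqrt{s}r^2$, so $q' \in P'$. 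Relabelling $2C$ as $C$ gives the required density.

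For the measure comparison, I apply Proposition~\ref{estimate on distance 3} with the choice $r = R'/\sqrt{s}$ (legitimate once $s$ is small enough that $r \ge 1$ and $\sqrt{s}r = R' \le R$), yielding the pointwise bound $d_{{\rm Sym}^+(S_{\Phi,R'})}(\hat{g}_A, \hat{g}_{A'(s,\cdot)}) \le CR'$. After possibly shrinking $R$ so that $CR \le \tfrac{\log 2}{2n+1}$, Lemma~\ref{estimate on distance 2} yields
\begin{equation*}
\left| \int_{S_R} f \, d\mu_{\hat{g}_{A'(s,\cdot)}} - \int_{S_R} f \, d\mu_{\hat{g}_{J_s}} \right| \le (2n+1)\, \sup|f| \cdot \mu_{\hat{g}_{A'(s,\cdot)}}(S_{R'}) \cdot CR'.
\end{equation*}
The factor $\mu_{\hat{g}_{A'(s,\cdot)}}(S_{R'})$ is controlled by $C''(R')^n$ via a direct computation of the Riemannian volume from the explicit local form of $\hat{g}_{A'(s,\cdot)}$ given in Proposition~\ref{description of connection metric} (after descending the trivialization by the deck action of $\Z/m\Z$, the volume on $S_{R'}$ is comparable to the Lebesgue volume of $\{\|x\|<R'\}\times T^n\times S^1$). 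Combining these pieces gives the claimed $C\sup|f|(R')^{n+1}$ bound.

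The main obstacle I anticipate is the density step: one must arrange the horizontal lift of $q^*$ to be $S^1$-comparable to $q$ and simultaneously keep track of two distances ($d_{J_s}$ and $d_{A'(s,\cdot)}$) through the sliding argument, including handling the boundary case $d_{g_{A'(s,\cdot)}}(\mathbf{0},\pi(q)) \le 2C\sqrt{s}r^2$. The rest is bookkeeping: absorbing successive rescalings of $C_0, C_1, C$ from Propositions~\ref{estimate on distance 3} and~\ref{estimate on distance 4} into a single constant, and choosing $R_0$ small enough that every quantitative hypothesis ($\sqrt{s}r \le R_0$, $CR' \le \tfrac{\log 2}{2n+1}$, etc.) is simultaneously met.
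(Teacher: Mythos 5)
Your proposal is correct and uses essentially the same ingredients as the paper's proof: Propositions~\ref{estimate on distance 3} and \ref{estimate on distance 4} for the metric comparison, Lemma~\ref{estimate on distance 2} for the integral bound, and the explicit form of $\hat{g}_{A'}$ for the volume factor $\mu_{\hat{g}_{A'}}(S_{R'})\asymp (R')^n$. The only genuine divergence is the density step, which you flag as the main obstacle and handle by sliding $\pi(q)$ inward along a minimizing $g_{A'}$-geodesic and taking a horizontal lift. The paper avoids both the explicit lift and the boundary case you worry about: it first shows $B_{g_{A'}}(\mathbf{0},r)\subset B_{g_{J_s}}(\mathbf{0},r+C^2\sqrt{s}r^2)$, then invokes the length-space identity
$B_{g_{J_s}}(\mathbf{0},r+C^2\sqrt{s}r^2)=B_{g_{J_s}}(B_{g_{J_s}}(\mathbf{0},r-C^2\sqrt{s}r^2),\,2C^2\sqrt{s}r^2)$,
which yields the required density after one more application of the distance comparison to pass from $d_{J_s}$- to $d_{A'}$-neighborhoods. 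Both devices are valid and give the same $O(\sqrt{s}r^2)$ error; the paper's route is shorter and you may prefer to substitute it. One small remark: the constant from Lemma~\ref{estimate on distance 2} in the measure estimate is $N=\dim S=2n+1$, which you correctly use (the paper's displayed $2n$ is a harmless slip, absorbed into $C$).
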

\begin{proof}
Fix $r\ge 1$. 
Take $R_0,C_0,C_1,C_0',C_1',C$ such that 
Proposition \ref{estimate on distance 4} 
holds. 
We may suppose $C>1$ and 
$C_0=C_0'=C^{-1}$, $C_1=C_1'=C$. 
Then $B_{g_{J_s}}(\mathbf{0},r)\subset U_{C\sqrt{s}r}$ 
and 
\begin{align*}
|d_{J_s}(p,q)-d_{A'}(p,q)|<C^2\sqrt{s}r^2
\end{align*}
hold for any 
$p,q\in S_{C\sqrt{s}r}$ and 
$0<s\le \frac{R_0^2}{C^2r^2}$. 
If $u\in B_{g_{J_s}}(\mathbf{0},r-C^2\sqrt{s}r^2)$, 
then 
\begin{align*}
d_{g_{A'}}(\mathbf{0}, u)<  
d_{g_{J_s}}(\mathbf{0}, u) + C^2\sqrt{s}r^2
<r,
\end{align*} 
which implies 
$B_{g_{J_s}}(\mathbf{0},r-C^2\sqrt{s}r^2)\subset B_{g_{A'}}(\mathbf{0},r)$. 

Now, $B_{g_{A'}}(\mathbf{0},r)
\subset U_{C\sqrt{s}r}$ 
holds. 
We also have
\begin{align*}
B_{g_{A'}}(\mathbf{0},r)
\subset B_{g_{J_s}}(\mathbf{0},r+C^2\sqrt{s}r^2).
\end{align*}
Since $d_{g_{J_s}}$ is an intrinsic metric, we have 
\begin{align*}
B_{g_{J_s}}(\mathbf{0},r+C^2\sqrt{s}r^2 )
=B_{g_{J_s}}\left( B_{g_{J_s}}(\mathbf{0},r-C^2\sqrt{s}r^2),2C^2\sqrt{s}r^2 \right).
\end{align*}
hence we can see
that 
\begin{align*}
{\rm id}\colon \left( 
\pi^{-1}(B_{g_{J_s}}(\mathbf{0},r-C^2\sqrt{s}r^2)),d_{J_s}\right) \to 
\left( \pi^{-1}(B_{g_{A'}}(\mathbf{0},r)), d_{A'(s,\cdot)}\right)
\end{align*}
is a Borel 
$\varepsilon_i$-$S^1$-equivariant 
Hausdorff approximation. 

Let 
$f\colon S_R \to \R$ be a Borel function 
such that ${\rm supp}(f)\subset \overline{S_{R'}}$ 
for some $R'\le R$ 
and $\sup|f|<\infty$. 
Then one can see 
\begin{align*}
\left| \int_{S_R} 
f d\mu_{\hat{g}_{A'}}
- \int_{S_R} 
f d\mu_{\hat{g}_{J_s}} \right|
&\le 2n\sup|f|\cdot \mu_{\hat{g}_{A'}}(S_{R'})
\cdot CR'
\end{align*}
by Lemma \ref{estimate on distance 2} and Proposition 
\ref{estimate on distance 3}. 
Since 
\begin{align*}
d\mu_{g_{A'}} =
\det(\mathbf{g}_{A'}) dt d\theta^1\cdots 
d\theta^n dx_1\cdots dx_n
\end{align*}
and 
\[ \det(\mathbf{g}_{A'})=\sigma\det
\left (
\begin{array}{cc}
\Theta^0 & -P^0(Q^0)^{-1} \\
-(Q^0)^{-1}P^0 & (Q^0)^{-1}
\end{array}
\right ), 
\]
one can see that 
$\mu_{g_{A'}}(S_{R'}) = \sigma C'(R')^n$, 
which gives the assertion. 
\end{proof}

Let $\{ J_s\}_s$ satisfy $\spadesuit$ 
and $A'(s,x,\theta):=sA^0(0,\theta)$. 
By Proposition \ref{lower bdd of ricci}, 
$P^0_{ij}(0,\theta)d\theta^j$ is a 
closed $1$-form on $T^n$. 
Then there are constants $\bar{P}_{ij}\in \R$ 
such that 
\begin{align*}
[P^0_{ij}(0,\cdot)d\theta^j]
=[\bar{P}_{ij}d\theta^j]\in H^1(T^n,\R),
\end{align*}
hence there are $\mathcal{H}_i\in C^\infty(T^n)$ such that 
\begin{align*}
P^0_{ij}(0,\cdot)d\theta^j
=\bar{P}_{ij}d\theta^j + d \mathcal{H}_i.
\end{align*}
Since $P^0_{ij}=P^0_{ji}$ and 
\begin{align*}
\bar{P}_{ij} = \int_{T^n}
P^0_{ij}(0,\theta)d\theta^1\cdots d\theta^n,
\end{align*}
we have $\bar{P}_{ij}=\bar{P}_{ji}$ and 
$\frac{\del \mathcal{H}_i}{\del\theta^j}
=\frac{\del \mathcal{H}_j}{\del\theta^i}$. 
Consequently, $\mathcal{H}_i d\theta^i$ is 
closed, therefore there are $\bar{P}_i\in \R$ 
and $\mathcal{H}\in C^\infty(T^n)$ such that 
$\mathcal{H}_i=\bar{P}_i+\frac{\del \mathcal{H}}{\del\theta^i}$, 
which gives 
\begin{align*}
P^0_{ij}(0,\cdot)
=\bar{P}_{ij} 
+ \frac{\del^2 \mathcal{H}}{\del\theta^i \del\theta^j}.
\end{align*}
If ${\rm Ric}_{g_{J_s}}$ has the lower bound, 
then by Proposition \ref{lower bdd of ricci} 
we have $Q^0_{ij}(0,\cdot)=\bar{Q}_{ij}\in\R$ 
and 
\begin{align*}
g_{A'} &= s(\bar{Q}_{ij} + P^0_{ik}\bar{Q}^{kl}P^0_{lj}) d\theta^i d\theta^j
-2 P^0_{ik}\bar{Q}^{jk} d\theta^i dx_j
+ s^{-1}\bar{Q}^{ij} dx_i dx_j\\
&= s\bar{Q}_{ij} d\theta^i d\theta^j
+ \frac{\bar{Q}^{ij}}{s}\left( dx_i
- sP^0_{ik}d\theta^k\right) 
\left( dx_j
- sP^0_{jl}d\theta^l\right)\\
&= s\bar{Q}_{ij} d\theta^i d\theta^j\\
&\quad\quad+ \frac{\bar{Q}^{ij}}{s}\left\{ 
d\left( 
x_i-s\frac{\del \mathcal{H}}{\del\theta^i}\right)
- s\bar{P}_{ik}d\theta^k\right\} 
\left\{ d\left( x_j-s\frac{\del \mathcal{H}}{\del\theta^j}\right)
- s\bar{P}_{jl}d\theta^l\right\}.
\end{align*}
Now, define 
$F_s\colon \R^n\times T^n\to \R^n\times T^n$ by 
\begin{align*}
F_s(x,\theta):=
\left( x_1+s\frac{\del \mathcal{H}}{\del\theta^1},
\ldots, x_n+s\frac{\del \mathcal{H}}{\del\theta^n},
\theta\right).
\end{align*}
Then $F_{-s}$ is the inverse of 
$F_s$ and
\begin{align*}
F_s^*g_{A'}
= s\bar{Q}_{ij} d\theta^i d\theta^j
+ \frac{\bar{Q}^{ij}}{s}
\left( d x_i- s\bar{P}_{ik}d\theta^k\right)
\left( d x_j- s\bar{P}_{jl}d\theta^l\right)
\end{align*}
holds. 
Moreover, we can lift $F_s$ to 
\begin{align*}
\hat{F}_s\colon \R^n\times 
(\R^n/{\rm Ker}\Phi)\times S^1 \to 
\R^n\times 
(\R^n/{\rm Ker}\Phi)\times S^1
\end{align*}
by 
\begin{align*}
\hat{F}_s(x,\theta,e^{\sqrt{-1}t}):=
\left( x_1+s\frac{\del \mathcal{H}}{\del\theta^1},
\ldots, x_n+s\frac{\del \mathcal{H}}{\del\theta^n},
\theta,e^{\sqrt{-1}(t+s\mathcal{H}(\theta))}\right).
\end{align*}
One can easy to check that 
$\hat{F}_s$ is $\Z/m\Z$-equivariant 
and $S^1$-equivariant map, and 
\begin{align*}
\hat{F}_s^*\hat{g}_{A'}
&= \sigma(dt - x_id\theta^i)^2
+s\bar{Q}_{ij} d\theta^i d\theta^j\\
&\quad\quad + \frac{\bar{Q}^{ij}}{s}
\left( d x_i- s\bar{P}_{ik}d\theta^k\right)
\left( d x_j- s\bar{P}_{jl}d\theta^l\right).
\end{align*}
Put $\bar{P}=(\bar{P}_{ij})_{i,j}$, $\bar{Q}=(\bar{Q}_{ij})_{i,j}$, 
$\bar{\Theta}=\bar{Q}
+\bar{P}\bar{Q}^{-1}\bar{P}$ and 
$y=\sqrt{s\bar{\Theta}}^{-1}x$, $\tau=
\sqrt{s\bar{\Theta}}\theta$. 
Then we may write 
\begin{align*}
\hat{F}_s^*\hat{g}_{A'}
&= \sigma(dt - {}^tx \cdot d\theta)^2
+{}^t d\theta \cdot s\bar{Q} \cdot d\theta\\
&\quad\quad + 
{}^t\left( d x- s\bar{P}d\theta\right)
\cdot\frac{\bar{Q}^{-1}}{s} \cdot
\left( d x- s\bar{P}d\theta\right)\\
&= \sigma(dt)^2 - 2\sigma ({}^ty \cdot d\tau) dt 
+ {}^t d\tau \cdot \left( 1
+\sigma y\cdot{}^t y \right) \cdot d\tau\\
&\quad\quad + 
{}^t d y 
\cdot \sqrt{\bar{\Theta}}\bar{Q}^{-1}\sqrt{\bar{\Theta}} 
\cdot d y
-2\cdot{}^t d y 
\cdot \sqrt{\bar{\Theta}}\bar{Q}^{-1} \bar{P}\sqrt{\bar{\Theta}}^{-1}
\cdot d\tau. 
\end{align*}
Since $K_y:=1 +\sigma y\cdot{}^t y$ 
is positive definite, 
it has the inverse and the square root. 
Accordingly, we have 
\begin{align*}
\hat{F}_s^*\hat{g}_{A'}
&= \sigma(dt)^2 - 2\sigma ({}^ty \cdot d\tau) dt 
+ {}^t d\tau \cdot K_y \cdot d\tau\\
&\quad\quad + 
{}^t d y 
\cdot \sqrt{\bar{\Theta}}\bar{Q}^{-1}\sqrt{\bar{\Theta}} 
\cdot d y
-2\cdot{}^t d y 
\cdot \sqrt{\bar{\Theta}}\bar{Q}^{-1} \bar{P}\sqrt{\bar{\Theta}}^{-1}
\cdot d\tau\\
&= {}^t
\left( \sqrt{K_y}d\tau-\sigma \sqrt{K_y}^{-1}y dt 
- \sqrt{K_y}^{-1}\sqrt{\bar{\Theta}}^{-1}\bar{P}\bar{Q}^{-1}\sqrt{\bar{\Theta}}dy\right)\\
&\quad\quad \cdot 
\left( \sqrt{K_y}d\tau-\sigma \sqrt{K_y}^{-1}y dt 
- \sqrt{K_y}^{-1}\sqrt{\bar{\Theta}}^{-1}\bar{P}\bar{Q}^{-1}\sqrt{\bar{\Theta}}dy\right)\\
&\quad\quad+ 
\left( \sigma 
-(\sigma^2) {}^ty K_y^{-1}y\right)(dt)^2\\
&\quad\quad 
-2\sigma {}^ty \cdot K_y^{-1} \sqrt{\bar{\Theta}}^{-1}
\bar{P} \bar{Q}^{-1}
\sqrt{\bar{\Theta}} dy
dt\\
&\quad\quad+ {}^tdy\cdot \sqrt{\bar{\Theta}}
\left( 
\bar{Q}^{-1}
- \bar{Q}^{-1}\bar{P}
\sqrt{\bar{\Theta}}^{-1}K_y^{-1}
\sqrt{\bar{\Theta}}^{-1}\bar{P}\bar{Q}^{-1}\right) 
\sqrt{\bar{\Theta}}\cdot dy.
\end{align*}

Here, we have 
\begin{align*}
{}^ty \cdot K_y^{-1}
&= \frac{{}^ty}{1+\sigma \| y\|^2},\\
{}^ty \cdot K_y^{-1}\cdot y
&= \frac{\| y\|^2}{1+\sigma \| y\|^2},
\end{align*}
and the similar computation as in 
the proof of Proposition \ref{estimate on distance} 
gives 
\begin{align*}
\bar{\Theta}^{-1}=\bar{Q}^{-1}
-\bar{Q}^{-1}\bar{P}\bar{\Theta}^{-1}
\bar{P} \bar{Q}^{-1}.
\end{align*}
Put 
\begin{align*}
\mathcal{T}
&:= d\tau-\sigma K_y^{-1}\cdot y dt 
- K_y^{-1}\sqrt{\bar{\Theta}}^{-1}\bar{P}\bar{Q}^{-1}\sqrt{\bar{\Theta}}dy,\\
\bar{S}&:=\sqrt{\bar{\Theta}}^{-1}\bar{P}\bar{Q}^{-1}\sqrt{\bar{\Theta}}.
\end{align*}
Then we may write
\begin{align*}
\hat{F}_s^*\hat{g}_{A'}
&= {}^t \mathcal{T}\cdot K_y \cdot\mathcal{T}
+ \frac{\sigma}{1+\sigma\| y\|^2}(dt)^2
-\frac{2\sigma}{1+\sigma\| y\|^2} {}^ty \cdot 
\bar{S} dy dt\\
&\quad\quad+ {}^tdy\cdot \left( 
1
+ {}^t\bar{S}\left( 
1-K_y^{-1}
\right)\bar{S}\right) \cdot dy.
\end{align*}
Since we have 
\begin{align*}
1-K_y^{-1}
= (K_y-1)K_y^{-1}
=\sigma y\cdot{}^ty\cdot K_y^{-1}
=\frac{\sigma y\cdot {}^ty}{1+\sigma\| y\|^2},
\end{align*}
we can see that 
\begin{align*}
\hat{F}_s^*\hat{g}_{A'}
&= {}^t \mathcal{T}\cdot K_y \cdot\mathcal{T}
+ 
\frac{\sigma}{1+\sigma\| y\|^2}(dt)^2
-\frac{2\sigma}{1+\sigma\| y\|^2} {}^ty \cdot \bar{S} dy dt\\
&\quad\quad+ {}^tdy\cdot \left( 
1
+ {}^t\bar{S}\left( 
\frac{\sigma y\cdot {}^ty}{1+\sigma\| y\|^2}
\right)\bar{S}\right) \cdot dy\\
&= {}^t \mathcal{T}\cdot K_y \cdot\mathcal{T}
+ 
\frac{\sigma}{1+\sigma\| y\|^2}
\left( dt-{}^ty \cdot \bar{S} dy\right)^2
+{}^tdy\cdot dy
\end{align*}
Define $\phi_{m,s}\colon S_{\Phi,R} \to \R^n\times S^1$ 
by 
\begin{align*}
\phi_{m,s}(x,\theta,e^{\sqrt{-1}t})
:= (\sqrt{s\bar{\Theta}}^{-1}x,e^{\sqrt{-1}t}). 
\end{align*}
and define $\Z/m\Z$-action on $\R^n\times S^1$ 
by $k\cdot (y,e^{\sqrt{-1}t}):=(y,e^{\sqrt{-1}(t-\frac{2k\pi}{m})})$. 
Then $\phi_{m,s}$ is $\Z/m\Z$-equivariant map 
and 
\begin{align*}
\phi_{m,s}\colon 
(\R^n\times (\R^n/{\rm Ker}\,\Phi)
\times S^1, \hat{F}_s^*\hat{g}_{A'})
\to (\R^n\times S^1,g_\infty)
\end{align*}
is a Riemannian submersion, where 
\begin{align*}
g_\infty
= \frac{\sigma}{1+\sigma\| y\|^2}
\left( dt-{}^ty \cdot \bar{S} dy\right)^2
+{}^tdy\cdot dy.
\end{align*}
Denote by $\mu_\infty$ the 
measure on $\R^n\times S^1$ defined by 
$d\mu_\infty = dy_1\cdots dy_n dt$.

\begin{prop}
Let $f\in C_0(\R^n\times S^1)$. 
Then there is a constant $K>0$ depending only on 
$\Phi,\sigma,\bar{\Theta}$ such that 
\begin{align*}
\int_{\R^n\times (\R^n/{\rm Ker}\,\Phi)\times S^1}
f\circ \phi_{m,s} d\mu_{\hat{F}_s^*\hat{g}_{A'}} 
= K\sqrt{s}^n \int_{\R^n\times S^1}
f d\mu_\infty.
\end{align*}
\label{push of measure}
\end{prop}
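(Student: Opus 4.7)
The plan is to compute $d\mu_{\hat{F}_s^*\hat{g}_{A'}}$ explicitly in the coordinates $(t,\theta,x)$, change variable to $y=\sqrt{s\bar{\Theta}}^{-1}x$, and then integrate out the $\theta$ direction, on which $f\circ\phi_{m,s}$ does not depend.

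Starting from the expression
\begin{align*}
\hat{F}_s^*\hat{g}_{A'}
&= \sigma(dt-x_id\theta^i)^2+s\bar{Q}_{ij}\,d\theta^i d\theta^j \\
&\quad + \frac{\bar{Q}^{ij}}{s}(dx_i-s\bar{P}_{ik}d\theta^k)(dx_j-s\bar{P}_{jl}d\theta^l)
\end{align*}
derived earlier, I would introduce the coframe
$\omega^0:=dt-x_id\theta^i$, $\omega^i:=d\theta^i$, $\tilde{\omega}^i:=dx_i-s\bar{P}_{ik}d\theta^k$.
In this coframe the metric is block-diagonal with blocks $\sigma$, $s\bar{Q}$ and $s^{-1}\bar{Q}^{-1}$, so its Gram determinant equals $\sigma \cdot s^n\det\bar{Q}\cdot s^{-n}(\det\bar{Q})^{-1}=\sigma$. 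The change-of-coframe matrix from $(dt,d\theta^i,dx_i)$ to $(\omega^0,\omega^i,\tilde{\omega}^i)$ is block lower-triangular with identity blocks on the diagonal, hence has determinant $1$. Consequently
$$d\mu_{\hat{F}_s^*\hat{g}_{A'}}=\sqrt{\sigma}\,dt\,d\theta^1\cdots d\theta^n\,dx_1\cdots dx_n.$$

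Next I apply the linear change of variable $y=\sqrt{s\bar{\Theta}}^{-1}x$, whose Jacobian equals $\sqrt{s}^{n}\sqrt{\det\bar{\Theta}}$. Since $(f\circ\phi_{m,s})(x,\theta,e^{\sqrt{-1}t})=f(y,e^{\sqrt{-1}t})$ is independent of $\theta$, the $\theta$-integral over $\R^n/{\rm Ker}\,\Phi$ contributes a multiplicative factor equal to the Lebesgue volume $\mathrm{vol}(\R^n/{\rm Ker}\,\Phi)$, which depends only on $\Phi$. Collecting the constants yields
$$\int (f\circ\phi_{m,s})\,d\mu_{\hat{F}_s^*\hat{g}_{A'}} = \sqrt{\sigma\det\bar{\Theta}}\cdot\mathrm{vol}(\R^n/{\rm Ker}\,\Phi)\cdot\sqrt{s}^{n}\int f\,d\mu_\infty,$$
so the claim holds with $K:=\sqrt{\sigma\det\bar{\Theta}}\cdot\mathrm{vol}(\R^n/{\rm Ker}\,\Phi)$, which depends only on $\sigma$, $\bar{\Theta}$ and $\Phi$ as required.

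I do not anticipate any serious obstacle: the entire argument reduces to recognising a block-diagonalising coframe for $\hat{F}_s^*\hat{g}_{A'}$ and tracking a linear change of variable. The only point to verify carefully is that the coframe transition has unit determinant, so that the volume form has the simple shape $\sqrt{\sigma}\,dt\,d\theta\,dx$ with no hidden $s$-dependence or cross terms; all the $s$ in the final formula must come from the rescaling $x\mapsto y$.
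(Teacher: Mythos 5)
Your proof is correct and arrives at the same constant $K=\sqrt{\sigma\det\bar{\Theta}}\cdot\mathrm{Vol}(\R^n/\mathrm{Ker}\,\Phi)$ as the paper. The overall structure matches the paper's argument: compute the Riemannian volume form, change variables, and integrate out the torus direction on which $f\circ\phi_{m,s}$ is constant. Where you differ is in how you read off the volume density. The paper works in the $(y,\tau,t)$ coordinates and uses the decomposition $\hat{F}_s^*\hat{g}_{A'}={}^t\mathcal{T}K_y\mathcal{T}+\frac{\sigma}{1+\sigma\|y\|^2}(dt-{}^ty\bar{S}dy)^2+{}^tdy\,dy$, then cancels $\det K_y=1+\sigma\|y\|^2$ against the $\frac{\sigma}{1+\sigma\|y\|^2}$ factor to get $\sqrt{\sigma}\,dt\,d\tau\,dy$. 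You instead stay in $(t,\theta,x)$ and note the block-diagonalising coframe $(dt-x_id\theta^i,\,d\theta^i,\,dx_i-s\bar{P}_{ik}d\theta^k)$ with blocks $\sigma,\ s\bar{Q},\ s^{-1}\bar{Q}^{-1}$ whose determinants cancel the $s$-dependence immediately. Your route is a little cleaner for this proposition, since it avoids invoking the more elaborate $\mathcal{T},K_y$ machinery that the paper needs anyway to identify the limit metric $g_{m,\infty}$ and prove the Gromov--Hausdorff convergence; the paper presumably reuses that expression rather than re-deriving the simpler one. Both computations rely on the same underlying fact — triangular coframe changes have unit determinant — and both then pick up the factor $\sqrt{s}^n\sqrt{\det\bar{\Theta}}$ from the linear rescaling ($x\mapsto y$ in your case, $\tau\mapsto\theta$ in the paper's).
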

\begin{proof}
Since 
\begin{align*}
d\mu_{\hat{F}_s^*\hat{g}_{A'}} &= 
\left( \frac{\sigma}{1+\sigma\| y\|^2}\det (K_y)
\right)^\frac{1}{2}dt d\tau^1\cdots d\tau^n 
dy_1\cdots dy_n\\
&= 
\left( \frac{\sigma}{1+\sigma\| y\|^2}(1+\sigma\| y\|^2)
\right)^\frac{1}{2}dt d\tau^1\cdots d\tau^n 
dy_1\cdots dy_n\\
&= \sqrt{\sigma}dt d\tau^1\cdots d\tau^n 
dy_1\cdots dy_n,
\end{align*}
we have 
\begin{align*}
&\quad\ \int_{\R^n\times (\R^n/{\rm Ker}\,\Phi)\times S^1}
f\circ \phi_{m,s} d\mu_{\hat{F}_s^*\hat{g}_{A'}} \\
&= \int_{\R^n\times (\R^n/{\rm Ker}\,\Phi)\times S^1} 
f\circ \phi_{m,s} 
\sqrt{\sigma}dt d\tau^1\cdots d\tau^n 
dy_1\cdots dy_n\\
&= \sqrt{\sigma}
\sqrt{s}^n\sqrt{\det{\bar{\Theta}}}
\int_{\R^n\times (\R^n/{\rm Ker}\,\Phi)\times S^1} 
f\circ \phi_{m,s} 
dt d\theta^1\cdots d\theta^n 
dy_1\cdots dy_n\\
&= \sqrt{\sigma}
{\rm Vol}\left( \R^n/{\rm Ker}\,\Phi\right)
\sqrt{\det{\bar{\Theta}}}\sqrt{s}^n
\int_{\R^n\times S^1} fdt dy_1\cdots dy_n.
\end{align*}
\end{proof}
Now, we put 
\begin{align*}
\mathbf{S}_\Phi:=\frac{\R^n\times (\R^n/{\rm Ker}\,\Phi)
\times S^1}{\Z/m\Z},
\end{align*}
then $\hat{g}_{A'}$ and 
$\hat{F}_s^*\hat{g}_{A'}$ induces the 
Riemannian metrics on $\mathbf{S}_\Phi$ 
such that $p_\Phi$ is local isometry. 
We also denote by $\hat{g}_{A'}$ and 
$\hat{F}_s^*\hat{g}_{A'}$, respectively 
if there is no fear of confusion.

Since $\phi_{m,s}$ is $\Z/m\Z$-equivariant, 
we have the following commutative 
diagram; 
\[ 
\left.
\begin{array}{ccc}
(\R^n\times (\R^n/{\rm Ker}\,\Phi)
\times S^1, \hat{F}_s^*\hat{g}_{A'}) & \stackrel{\phi_{m,s}}{\rightarrow} & (\R^n\times S^1,g_\infty) \\
p_\Phi \downarrow & & p_m \downarrow \\
(\mathbf{S}_\Phi, \hat{F}_s^*\hat{g}_{A'}) & \stackrel{\phi_s}{\rightarrow} & 
(\R^n\times S^1,g_{m,\infty})
\end{array}
\right.
\]
where $p_m$ is the quotient map defined 
by $p_m(y,e^{\sqrt{-1}t}):=(y,e^{\sqrt{-1}mt})$ 
and $g_{m,\infty}$ is defined by 
\begin{align}
g_{m,\infty}
= \frac{\sigma}{1+\sigma\| y\|^2}
\left( \frac{dt}{m}-{}^ty \cdot \bar{S} dy\right)^2
+{}^tdy\cdot dy
\label{limit metric}
\end{align}
such that 
${p_m}^*g_{m,\infty}=g_\infty$ and 
$\phi_s$ is the Riemannian submersion.

\begin{prop}
Let $\{ J_s\}_s$ satisfy $\spadesuit$ 
and $A'(s,x,\theta):=sA^0(0,\theta)$ and put 
$p_0=p_\Phi(0,0,1)\in \mathbf{S}_\Phi$. 
Assume that there are constants $s_0>0$ and 
$\kappa\in\R$ 
such that ${\rm Ric}_{g_{J_s}}\ge \kappa g_{J_s}$ for any $0<s\le s_0$. 
Then the family of pointed metric measure spaces with 
the isometric $S^1$-action 
\begin{align*}
\left\{ \left( \mathbf{S}_\Phi, d_{A'}, \frac{\mu_{\hat{g}_{A'}}}{K\sqrt{s}^n},p_0\right)\right\}_s
\end{align*}
converges to $\left( \R^n\times S^1, d_{g_{m,\infty}}, \mu_{\infty},(0,1)\right)$ as $s\to 0$ 
in the sense of the pointed $S^1$-equivariant 
measured Gromov-Hausdorff topology. 
\label{conv of local model}
\end{prop}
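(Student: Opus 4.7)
The plan is to realize the claimed limit via the Riemannian submersion $\phi_s\colon (\mathbf{S}_\Phi, \hat{F}_s^*\hat{g}_{A'}) \to (\R^n \times S^1, g_{m,\infty})$ constructed just before the statement, and to observe that its fibers shrink to points as $s \to 0$. Since $\hat{F}_s$ is simultaneously $\Z/m\Z$-equivariant (hence descends to $\mathbf{S}_\Phi$) and $S^1$-equivariant, the pointed $S^1$-metric measure spaces $(\mathbf{S}_\Phi, \hat{g}_{A'}, p_0)$ and $(\mathbf{S}_\Phi, \hat{F}_s^*\hat{g}_{A'}, \hat{F}_s^{-1}(p_0))$ are isomorphic, and $\hat{F}_s^{-1}(p_0) = \hat{F}_{-s}(p_0)$ lies within $O(s)$ of $p_0$. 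So it suffices to prove the convergence using the pulled-back metric, for which $\phi_s(\hat{F}_s^{-1}(p_0)) \to (0,1)$ at rate $O(\sqrt{s})$.

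The first technical step is to control the fibers of $\phi_s$ uniformly on bounded regions of the base. The fiber of $\phi_{m,s}$ over $(y_0, e^{it_0})$ is parametrized by $\theta \in \R^n/{\rm Ker}\,\Phi$ with $x = \sqrt{s\bar{\Theta}}\,y_0$ and $t$ fixed modulo $2\pi/m$; on this fiber $dy = dt = 0$, and the explicit formula for $\hat{F}_s^*\hat{g}_{A'}$ gives the induced fiber metric ${}^t d\tau \cdot K_{y_0} \cdot d\tau = s\,{}^td\theta \cdot \sqrt{\bar{\Theta}}\,K_{y_0}\sqrt{\bar{\Theta}}\,d\theta$. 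Since $K_{y_0} = 1+\sigma\,y_0\cdot {}^t y_0$ is uniformly bounded on compact sets of $\R^n$ and $\R^n/{\rm Ker}\,\Phi$ has finite diameter, the fibers of $\phi_s$ have diameter $O(\sqrt{s})$ uniformly over any bounded region of the base.

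Next I would verify that, on preimages of a fixed-radius ball in the $S^1$-quotient of the base, $\phi_s$ is a Borel $\varepsilon_s$-$S^1$-equivariant Hausdorff approximation with $\varepsilon_s = O(\sqrt{s})$. One direction of the metric inequality is immediate from the Riemannian submersion property. For the reverse, I lift a minimizing geodesic in the base horizontally to $\mathbf{S}_\Phi$ starting at $p$, ending in the fiber over $\phi_s(q)$, then join within the fiber to $q$; the added length is $O(\sqrt{s})$. Surjectivity of $\phi_s$ onto any ball of the base follows from horizontal lifting of radial geodesics, and the required $S^1$-equivariance is already built in, the factor $m$ in the action $(y,e^{it})\cdot e^{i\tau}=(y,e^{i(t+m\tau)})$ matching the $m$-fold quotient $p_m$. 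The measure part is actually an exact identity: applying Proposition \ref{push of measure} on the $m$-fold cover and using $p_m^*g_{m,\infty} = g_\infty$ yields
\begin{align*}
\int_{\mathbf{S}_\Phi} (f\circ\phi_s)\, d\mu_{\hat{F}_s^*\hat{g}_{A'}} = K'\sqrt{s}^n\int_{\R^n\times S^1} f\, d\mu_\infty
\end{align*}
for a positive constant $K'$, so $(\phi_s)_*(\mu_{\hat{F}_s^*\hat{g}_{A'}}/(K'\sqrt{s}^n)) = \mu_\infty$ identically, and vague convergence is automatic.

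The main obstacle is organizational rather than analytic: one must reconcile the $\Z/m\Z$- and $S^1$-equivariances, the base-point drift of $O(s)$ under $\hat{F}_s$, and the factor-of-$m$ discrepancy between the $S^1$-actions on the cover $\R^n \times (\R^n/{\rm Ker}\,\Phi) \times S^1$ and on the quotient $\mathbf{S}_\Phi$. Once the uniform fiber-diameter bound of the second step is in place, all of these are absorbed into the single $\varepsilon_s = O(\sqrt{s})$ error term, and no further analytic estimates are required.
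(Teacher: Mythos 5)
Your proposal is correct and follows essentially the same route as the paper: reduce via the $S^1$- and $\Z/m\Z$-equivariant isometry $\hat F_s$ to the pulled-back metric, observe that $\phi_s$ is a Riemannian submersion whose fibers shrink at rate $O(\sqrt s)$ uniformly on bounded sets, and invoke the exact pushforward identity from Proposition \ref{push of measure} for the measures. You are in fact a bit more careful than the paper's terse write-up about the basepoint drift under $\hat F_s$ and the explicit fiber-metric computation, but the core argument is identical.
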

\begin{proof}
Since $\hat{F}_s$ is an $S^1$-equivariant 
isometry, 
it suffices to show that 
\begin{align*}
\left\{ \left( \mathbf{S}_\Phi, d_{\hat{F}_s^*\hat{g}_{A'}}, \frac{\mu_{\hat{F}_s^*\hat{g}_{A'}}}{K\sqrt{s}^n},p_0\right)\right\}_s
\end{align*}
converges to $\left( \R^n\times S^1, d_{g_{m,\infty}}, \mu_{\infty},(0,1)\right)$ as $s\to 0$ 
in the sense of the pointed $S^1$-equivariant 
measured Gromov-Hausdorff topology. 
Since 
\begin{align*}
\hat{F}_s^*\hat{g}_{A'}
= {}^t\mathcal{T}K_y\mathcal{T} + g_\infty,
\end{align*}
one can see that 
$\phi_s$ is a Riemannian submersion and 
the diameters of the fibers $\phi_s^{-1}(y,t)$ 
are at most $C\sqrt{s(1+\sigma\| y\|^2)}
$, where $C>0$ is a constant 
depending only on $\bar{P},\bar{Q}$ and $\Phi$, 
hence the 
pointed Gromov-Hausdorff convergence follows. 
Moreover, Proposition \ref{push of measure} 
implies that 
$(\phi_{m,s})_*\mu_{\hat{F}_s^*hat{g}_{A'}}
=K\sqrt{s}^n\mu_\infty$, 
especially we also have the vague 
convergence of the measures. 
\end{proof}
\begin{thm}
Let $\{ J_s\}_s$ satisfies $\spadesuit$ and 
suppose that there there are constants 
$s_0>0$ and $\kappa\in\R$ 
such that ${\rm Ric}_{g_{J_s}}\ge \kappa g_{J_s}$ 
for any $0<s\le s_0$. 
Put 
$p_0=p_\Phi(0,0,1)\in S(L|_U,h)$. 
Then the family of pointed metric measure spaces with 
the isometric $S^1$-action 
\begin{align*}
\left\{ \left( S(L,h), d_{J_s}, 
\frac{\mu_{\hat{g}_{J_s}}}{K\sqrt{s}^n},p_0
\right)\right\}_s
\end{align*}
converges to $\left( \R^n\times S^1, d_{g_{m,\infty}}, \mu_{\infty},(0,1)\right)$ as $s\to 0$ 
in the sense of the pointed $S^1$-equivariant 
measured Gromov-Hausdorff topology. 
\label{conv 1}
\end{thm}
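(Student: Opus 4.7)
The plan is to compose two already-constructed Borel $S^1$-equivariant Hausdorff approximations: one from Proposition~\ref{approximation}, which compares the true connection metric $\hat{g}_{J_s}$ with its linearized ``frozen-coefficient'' model $\hat{g}_{A'(s,\cdot)}$, and one from Proposition~\ref{conv of local model}, which has already established pointed $S^1$-equivariant measured Gromov-Hausdorff convergence of the model to the target $(\R^n\times S^1,g_{m,\infty},\mu_\infty,(0,1))$. Proposition~\ref{estimate on distance 4}(iii) plays a supporting role: it guarantees that geodesic balls of fixed radius in $(S(L,h),\hat{g}_{J_s})$ are trapped inside the action-angle coordinate patch $U$ as soon as $s$ is small, so that the local description of Proposition~\ref{description of connection metric} applies throughout.

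Concretely, I would fix $R>0$ and, with $C_0,R_0$ from Proposition~\ref{estimate on distance 4}(iii), set $r:=\max\{R/C_0,1\}$. For $s$ small enough that $\sqrt{s}\,r\le R_0$, the ball $B_{g_{J_s}}(\mathbf 0,R)$ lies in $U_{\sqrt{s}\,r}$ and its $S^1$-saturation sits in $S_{\sqrt{s}\,r}\subset S(L|_U,h)$. Identifying $S(L|_U,h)$ with $\mathbf{S}_\Phi$ $S^1$-equivariantly via $p_\Phi$, Proposition~\ref{approximation} then provides the Borel $S^1$-equivariant $C\sqrt{s}\,R^2$-Hausdorff approximation
\[ \mathrm{id}\colon \left(\pi^{-1}(B_{g_{J_s}}(\mathbf 0,R-C\sqrt{s}\,R^2)),d_{J_s}\right) \longrightarrow \left(\pi^{-1}(B_{g_{A'}}(\mathbf 0,R)),d_{A'}\right), \]
together with the integral bound $|\int f\,d\mu_{\hat{g}_{A'}}-\int f\,d\mu_{\hat{g}_{J_s}}|\le C\sup|f|\cdot(\sqrt{s}\,r)^{n+1}$ for Borel $f$ supported in $S_{\sqrt{s}\,r}$. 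Proposition~\ref{conv of local model} supplies Borel $S^1$-equivariant $\varepsilon'_s$-Hausdorff approximations $\phi_s\colon(\mathbf{S}_\Phi,d_{A'})\to(\R^n\times S^1,d_{g_{m,\infty}})$ together with the vague convergence $(\phi_s)_*(\mu_{\hat{g}_{A'}}/K\sqrt{s}^n)\to\mu_\infty$, and the composition $\phi_s\circ\mathrm{id}$ then yields the required approximation with total Hausdorff error $\varepsilon'_s+C\sqrt{s}\,R^2\to 0$.

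For the measure convergence, dividing the integral bound by $K\sqrt{s}^n$ turns it into an error $O(\sqrt{s}\,r^{n+1})$, which vanishes as $s\to 0$ for fixed $r$; combined with the vague convergence in Proposition~\ref{conv of local model} this gives vague convergence of $(\phi_s\circ\mathrm{id})_*(\mu_{\hat{g}_{J_s}}/K\sqrt{s}^n)$ to $\mu_\infty$ on every relatively compact subset of $\R^n\times S^1$. The main, and essentially only, obstacle is to verify that the Hausdorff and measure errors vanish simultaneously for a fixed radius $R$; this reduces to the observation that once $r$ is fixed in terms of $R$, both errors become $O(\sqrt{s})$ up to a polynomial factor in $R$, so letting $s\to 0$ delivers the stated pointed $S^1$-equivariant measured Gromov-Hausdorff convergence.
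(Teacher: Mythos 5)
Your proposal follows essentially the same route as the paper: compose the Borel $S^1$-equivariant Hausdorff approximation of Proposition~\ref{approximation} (comparing $\hat{g}_{J_s}$ to the frozen model $\hat{g}_{A'}$) with the convergence established in Proposition~\ref{conv of local model}, and then control the measure error by dividing the integral bound of Proposition~\ref{approximation} by $K\sqrt{s}^n$. The paper presents the measure part through the explicit maps $\psi_s=\phi_{m,s}\circ\hat F_{-s}$ and invokes Proposition~\ref{push of measure} directly, whereas you phrase it as a composition $\phi_s\circ\mathrm{id}$; these are the same thing. One small imprecision worth flagging: in the measure step the relevant radius $r$ is determined by the support of the test function $f\in C_0(\R^n\times S^1)$ (via the scaling $\phi_{m,s}(x,\theta,e^{\sqrt{-1}t})=(\sqrt{s\bar\Theta}^{-1}x,e^{\sqrt{-1}t})$, so $\operatorname{supp}(f\circ\psi_s)\subset S_{\sqrt{s}r}$ with $r$ fixed by $f$, not by the GH radius $R$); but since you still take a fixed $f$ and let $s\to 0$, the $O(\sqrt{s})$ decay is unaffected and the argument goes through exactly as in the paper.
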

\begin{proof}
Put $A'(s,x,\theta):=sA^0(0,\theta)$. 
By Proposition \ref{approximation}, 
there exist constants $R_0,C>0$ such that 
\begin{align*}
{\rm id}\colon \left( 
\pi^{-1}(B_{g_{J_s}}(\mathbf{0},r-C\sqrt{s}r^2)),d_{J_s}\right) \to 
\left( \pi^{-1}(B_{g_{A'(s,\cdot)}}(\mathbf{0},r)), d_{A'(s,\cdot)}\right)
\end{align*}
is a Borel 
$C\sqrt{s}r^2$-$S^1$-equivariant 
Hausdorff approximation 
for any $r\ge 1$ and $s\le \frac{R_0^2}{Cr^2}$. 
Since $C\sqrt{s}r^2\to 0$ as $s\to 0$ for 
any fixed $r$, therefore, 
\begin{align*}
\left\{ \left( S(L,h), d_{J_s}, p_0
\right)\right\}_s
\stackrel{S^1\mathchar`-{\rm GH}}{\longrightarrow} \left( \R^n\times S^1, d_{g_{m,\infty}}, (0,1)\right)
\end{align*}
as $s\to 0$ by Proposition 
\ref{conv of local model}. 

Next we show the vague convergence 
of the measures. 
Now the approximation from 
$\left( S(L,h), d_{J_s}, p_0\right)$ 
to $\left( \R^n\times S^1, d_{g_{m,\infty}}, (0,1)\right)$ is induced by the 
$\Z/m\Z$-equivariant maps 
$\psi_s:=\phi_{m,s}\circ\hat{F}_{-s}$. 
Take $f\in C_0(\R^n\times S^1)$. 
Then Proposition \ref{push of measure} 
gives 
\begin{align*}
\int_{\R^n\times S^1}
f d\mu_\infty
&= \frac{1}{K\sqrt{s}^n}
\int_{\R^n\times (\R^n/{\rm Ker}\,\Phi)\times S^1}
f\circ \phi_{m,s} d\mu_{\hat{F}_s^*\hat{g}_{A'}}\\
&= \frac{1}{K\sqrt{s}^n}
\int_{\R^n\times (\R^n/{\rm Ker}\,\Phi)\times S^1}
f\circ \psi_s d\mu_{\hat{g}_{A'}}.
\end{align*}
Note that $\sup |f\circ \psi_s| \le \sup |f|<\infty$. 
By the definition of $\phi_{m,s}$, 
there is $r>0$ independent of $s$ such that 
${\rm supp}(f\circ \psi_s) \subset S_{\sqrt{s}r}$ 
holds for any $0<s\le s_0$. 
Then Proposition \ref{approximation} gives 
some constants $C_2>0$ such that 
\begin{align*}
&\quad\quad 
\frac{1}{K\sqrt{s}^n}
\left| 
\int_{\R^n\times (\R^n/{\rm Ker}\,\Phi)\times S^1}
f\circ \psi_s d\mu_{\hat{g}_{J_s}}
- \int_{\R^n\times (\R^n/{\rm Ker}\,\Phi)\times S^1}
f\circ \psi_s d\mu_{\hat{g}_{A'}}
\right|\\
&\le \frac{C_2\sup|f|(\sqrt{s}r)^{n+1}}{K\sqrt{s}^n}
\to 0
\end{align*}
as $s\to 0$. 
\end{proof}

\section{The spectral structures on the limit spaces}\label{sec spectral}
In this section we consider the 
metric measure space $(\R^n\times S^1,g_{m,\infty},\mu_\infty)$
defined by \eqref{limit metric}. 
Now, note that 
\begin{align*}
\bar{S}=\sqrt{\bar{\Theta}}^{-1}\bar{P} \bar{Q}^{-1}
\sqrt{\bar{\Theta}}
&=\sqrt{\bar{\Theta}}^{-1}\left( 
\bar{P} \bar{Q}^{-1}\bar{\Theta}
\right)
\sqrt{\bar{\Theta}}^{-1}\\
&=\sqrt{\bar{\Theta}}^{-1}\left( 
\bar{P} + \bar{P} \bar{Q}^{-1}\bar{P} \bar{Q}^{-1}\bar{P}\right)
\sqrt{\bar{\Theta}}^{-1},
\end{align*}
which implies $\bar{S}$ 
is symmetric. 
Consequently, we can see 
\begin{align*}
\frac{dt}{m}-{}^ty \cdot \bar{S} \cdot dy
&= d\left( \frac{t}{m}-\frac{{}^ty \cdot 
\bar{S} 
\cdot y}{2}
\right).
\end{align*}
Here, by taking the pullback of $g_{m,\infty}$ 
by the diffeomorphism
\[ 
\left.
\begin{array}{ccc}
\R^n\times S^1 & \rightarrow & \R^n\times S^1 \\
\rotatebox{90}{$\in$} & & \rotatebox{90}{$\in$} \\
\left( y,e^{\sqrt{-1}t}\right) & \mapsto & \left( y,e^{\sqrt{-1}\left( t+m\cdot
\frac{{}^ty \cdot 
\bar{S}
\cdot y}{2} \right)}\right),
\end{array}
\right.
\]
we may suppose 
\begin{align*}
g_{m,\infty}
&= \frac{\sigma}{m^2(1+\sigma\| y\|^2)}
( dt )^2
+{}^tdy\cdot dy\\
d\mu_\infty &= dy_1\cdots dy_ndt
\end{align*}
and the isometric 
$S^1$-action on $(\R^n\times S^1,g_{m,\infty})$ 
is given by 
\begin{align*}
e^{\sqrt{-1}\tau}\cdot
\left( y,e^{\sqrt{-1}t}\right)
=\cdot
\left( y,e^{\sqrt{-1}(t+m\tau)}\right). 
\end{align*}
Then the Laplace operator $\Delta_{m,\infty}$ 
on $(\R^n\times S^1,g_{m,\infty},\mu_\infty)$
is defined such that 
\begin{align*}
\int_{\R^n\times S^1}
(\Delta_{m,\infty} f_1) f_2 d\mu_\infty
= \int_{\R^n\times S^1}\langle 
df_1,df_2\rangle_{g_{m,\infty}} d\mu_\infty
\end{align*}
holds for any $f_1,f_2\in C^\infty(\R^n\times S^1)$, 
therefore we have 
\begin{align*}
\Delta_{m,\infty} f
= \Delta_{\R^n} f -\frac{m^2(1+\sigma \| y\|^2)}{\sigma} \frac{\del^2 f}{\del t^2},
\end{align*}
where $\Delta_{\R^n}=-\sum_{i=1}^n\frac{\del^2}{\del y_i^2}$. 

Let $\rho_k$ be the representation of $S^1$ 
defined in Section \ref{principal metric}, 
then we have 
\begin{align*}
\left( L^2 (\R^n\times S^1)
\otimes \C\right)^{\rho_{ml}}
= \left\{ \varphi(y)e^{-\sqrt{-1}l t};\, \varphi\in L^2 (\R^n)\right\}
\end{align*}
and $\left( L^2 (\R^n\times S^1)
\otimes \C\right)^{\rho_{k}}=\{ 0\}$ if $k\notin m\Z$. 
Now we consider the operator
\begin{align*}
\Delta_{m,\infty} - \left( \frac{k^2}{\sigma} + kn\right)\colon 
\left( C^\infty (\R^n\times S^1)
\otimes \C\right)^{\rho_k}
\to 
\left( C^\infty (\R^n\times S^1)
\otimes \C\right)^{\rho_k}
\end{align*}
for $k=ml$, 
which corresponds to 
the limit of 
\begin{align*}
2\Delta_{\delb_{J_s}}\colon 
C^\infty (X,L^k) \to C^\infty (X,L^k)
\end{align*}
as $s\to 0$. 
Let $(\R^n,{}^tdy\cdot dy,
e^{-k\| y\|^2}d\mathcal{L}_{\R^n})$ be the 
Gaussian space, 
where $\mathcal{L}_{\R^n}$ is the Lebesgue 
measure on $\R^n$ and denote by 
$\Delta_{\R^n,k}$ 
the Laplacian 
of this metric measure space. 
Note that we have 
\begin{align*}
\Delta_{\R^n,k} \varphi
= \Delta_{\R^n}\varphi +2k\sum_{i=1}^n y_i\frac{\del \varphi}{\del y_i}.
\end{align*}
Then we can see that 
the following linear isomorphism 
\[ 
\left.
\begin{array}{ccc}
C^\infty (\R^n)\otimes\C & \to 
& \left( C^\infty (\R^n\times S^1)
\otimes \C\right)^{\rho_k} \\
\rotatebox{90}{$\in$} & & \rotatebox{90}{$\in$} \\
\varphi & \mapsto & 
\varphi\cdot e^{-\frac{k\| y\|^2+\sqrt{-1}lt}{2}}
\end{array}
\right.
\]
induces the isomorphism 
\begin{align*}
L^2(\R^n,e^{-k\| y\|^2}d\mathcal{L}_{\R^n})
\otimes\C
\cong 
\left( L^2 (\R^n\times S^1,d\mu_\infty)
\otimes \C\right)^{\rho_{k}}
\end{align*}
and the identification of the operators 
\begin{align*}
\Delta_{\R^n,k}
\cong 
\Delta_{m,\infty} - \left( \frac{k^2}{\sigma} + kn\right).
\end{align*}
Next we construct 
the eigenfunctions of $\Delta_{\R^n,k}$
by the hermitian polynomials. 
For $\xi\in\R$ the hermitian polynomials are 
defined by 
\begin{align*}
H_{k,N}(\xi) := e^{k\xi^2}
\frac{d^N}{d\xi^N} e^{-k\xi^2}, 
\end{align*}
which is a polynomial in $\xi$ of degree $N$, 
then it is known that $H_{k,N}$ solves 
\begin{align*}
-\frac{d^2}{d\xi^2}H_{k,N} 
+2k\xi\frac{d}{d\xi}H_{k,N} 
= 2kN H_{k,N}
\end{align*}
and 
$\{ H_{k,N}\}_{N=0}^\infty$ is 
a complete orthonormal system 
of $L^2(\R,e^{-k\xi^2}d\mathcal{L}_\R)$. 
Let $N=(N_1,\ldots,N_n)\in\Z_{\ge 0}^n$ and 
put 
\begin{align*}
\left(\frac{\del}{\del y}\right)^N
&:= \frac{\del^{N_1}}{\del y_1^{N_1}}
\cdots \frac{\del^{N_n}}{\del y_n^{N_n}},\\
|N|&:=\sum_{i=1}^n N_i. 
\end{align*}
Then 
\begin{align*}
\varphi(y)= \prod_{i=1}^n H_{k,N_i}(y_i)
= e^{k\| y\|^2}\left(\frac{\del}{\del y}\right)^N
(e^{-k\| y\|^2})
\end{align*}
solves 
\begin{align*}
\Delta_{\R^n,k} \varphi 
= 2k|N| \varphi
\end{align*}
and $\{ \prod_{i=1}^n H_{k,N_i}(y_i);\, (N_1,\ldots,N_n)\in\Z_{\ge 0}\}$ is a complete orthonormal system 
of $L^2(\R^n,e^{-k\| y\|^2}d\mathcal{L}_{\R^n})$. 
Thus we have the following theorem.

\begin{thm}
Let $l\in\Z_{>0}$, $k=ml$ and 
\begin{align*}
W(k,\lambda) := \left\{ 
f\in \left( C^\infty (\R^n\times S^1)
\otimes \C\right)^{\rho_k};\,
\left( \Delta_{m,\infty} - \frac{k^2}{\sigma} - kn\right)f = 2\lambda f \right\}.
\end{align*}
Then there is an orthogonal decomposition 
\begin{align*}
(L^2(\R^n\times S^1)\otimes \C)^{\rho_k}
&=\overline{ \bigoplus_{d\in\Z_{\ge 0}}W(k,kd)},
\end{align*}
where 
\begin{align*}
W(k,kd)
&= {\rm span}_\C\left\{ e^{\frac{k\| y\|^2}{2}-\sqrt{-1}lt}
\left(\frac{\del}{\del y}\right)^N
(e^{-k\| y\|^2});\, 
N\in\Z_{\ge 0},\, 
|N|=d\right\}.
\end{align*}
\label{thm spec}
\end{thm}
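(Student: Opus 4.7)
The plan is to combine three ingredients that have already been assembled in the paragraphs preceding the statement: the explicit formula for $\Delta_{m,\infty}$ in terms of $\Delta_{\R^n}$ and $\partial_t^2$, the intertwining given by the multiplication map $\varphi \mapsto \varphi \cdot e^{-(k\|y\|^2 + \sqrt{-1}lt)/2}$, and the spectral theory of the Ornstein--Uhlenbeck operator on Gauss space. The upshot is that the spectral decomposition on $(\R^n\times S^1,g_{m,\infty})$ reduces to the classical Hermite expansion on $L^2(\R^n, e^{-k\|y\|^2} d\mathcal{L}_{\R^n})$.

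First I would record that an element of $(C^\infty(\R^n\times S^1)\otimes\C)^{\rho_k}$ with $k=ml$ is uniquely of the form $\varphi(y)e^{-\sqrt{-1}lt}$ (and is $0$ unless $m\mid k$, as noted earlier). Substituting this into the explicit expression
\begin{align*}
\Delta_{m,\infty}f = \Delta_{\R^n}f - \frac{m^2(1+\sigma\|y\|^2)}{\sigma}\frac{\partial^2 f}{\partial t^2},
\end{align*}
and factoring out $e^{-\sqrt{-1}lt}$,
the second term contributes $\frac{k^2}{\sigma}(1+\sigma\|y\|^2)\varphi$. Next I would verify by direct differentiation (product rule with $e^{-k\|y\|^2/2}$) the key intertwining identity already asserted above the theorem, namely that
\begin{align*}
\Bigl(\Delta_{m,\infty}-\frac{k^2}{\sigma}-kn\Bigr)\bigl(\varphi e^{-(k\|y\|^2+\sqrt{-1}lt)/2}\bigr) = \bigl(\Delta_{\R^n,k}\varphi\bigr) e^{-(k\|y\|^2+\sqrt{-1}lt)/2},
\end{align*}
where $\Delta_{\R^n,k}\varphi = \Delta_{\R^n}\varphi + 2k\sum_i y_i\partial_{y_i}\varphi$. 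The cross terms $2k\|y\|^2\varphi$ from differentiating the Gaussian weight cancel the $k\cdot\sigma\|y\|^2/\sigma$ piece from $\Delta_{m,\infty}$, and the constants $kn$ from $\Delta_{\R^n}(e^{-k\|y\|^2/2})$ together with $k^2/\sigma$ match the subtraction.

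Having set up this isometric conjugation, I would invoke the stated facts about Hermite polynomials: each product $\prod_{i=1}^n H_{k,N_i}(y_i)$ satisfies $\Delta_{\R^n,k}\varphi = 2k|N|\varphi$, and $\{\prod_i H_{k,N_i}(y_i)\}_{N\in\Z_{\ge 0}^n}$ is a complete orthonormal system of $L^2(\R^n, e^{-k\|y\|^2}d\mathcal{L}_{\R^n})$. Transported back through the multiplication isomorphism, this immediately yields that $W(k,kd)$ is the $\C$-span of the functions $e^{(k\|y\|^2/2)-\sqrt{-1}lt}(\partial/\partial y)^N(e^{-k\|y\|^2})$ with $|N|=d$, and that these subspaces span a dense subspace of $(L^2(\R^n\times S^1)\otimes\C)^{\rho_k}$.

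The routine but technically most delicate step is verifying the intertwining identity, since one must track several cancellations involving the $\sigma\|y\|^2$ weight; everything else is either pure bookkeeping or a direct citation of the Hermite expansion. I do not expect any deeper obstacle, because the geometry has been linearized by the substitution before the theorem and the remaining content is a standard separation of variables.
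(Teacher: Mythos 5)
Your proposal is correct and follows exactly the route the paper takes: the theorem is stated as an immediate consequence of the preceding conjugation $\varphi\mapsto\varphi\,e^{-\frac{k\|y\|^2}{2}-\sqrt{-1}lt}$ identifying $\Delta_{m,\infty}-\frac{k^2}{\sigma}-kn$ with the Ornstein--Uhlenbeck operator $\Delta_{\R^n,k}$, together with the Hermite expansion of $L^2(\R^n,e^{-k\|y\|^2}d\mathcal{L}_{\R^n})$, so your write-up is precisely the justification the paper leaves implicit. One bookkeeping slip in your narration of the cancellation: the quadratic-in-$y$ term produced when $\Delta_{\R^n}$ hits the Gaussian factor is $-k^2\|y\|^2\varphi$ (not $2k\|y\|^2\varphi$), and it cancels the $k^2\|y\|^2\varphi$ term coming from $\frac{k^2}{\sigma}(1+\sigma\|y\|^2)\varphi$, while the genuine cross term $2k\sum_i y_i\partial_{y_i}\varphi$ is what is absorbed into $\Delta_{\R^n,k}$ rather than what cancels the weight; the intertwining identity you display is nevertheless correct, and the rest of the argument goes through unchanged.
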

As a consequence of 
Theorem \ref{thm spec}, 
we obtain the former part 
of Theorem \ref{main 3}.

\section{The fibers which are not 
$m$-BS fibers for any positive $m$}\label{sec others}
In this section we suppose 
$(X^{2n},\omega)$ is a symplectic manifold 
with a prequantum line bundle $(L,\nabla,h)$, 
and assume that there is 
a continuous map $\mu\colon X \to Y$ to a 
topological space $Y$. 
Moreover we fix $b_0\in Y$ such that 
$\mu^{-1}(b_0)$ is not an $m$-BS fiber 
for any $m\in\Z$. 

Let $\{ J_s\}_{0<s\le s_0}$ be a 
one parameter family of $\omega$-compatible 
complex structures, and 
denote by $\mathcal{L}_{g}(c)$ the 
length of a path $c$ 
with respect to the Riemannian metric $g$. 
We fix $p_0\in\mu^{-1}(b_0)$ 
and assume the 
followings. 

\begin{itemize}
\setlength{\parskip}{0cm}
\setlength{\itemsep}{0cm}
 \item[$\star 1$] 
For any $r>0$ and 
open neighborhood $B\subset Y$ of 
$b_0$ there is $s_{r,B}>0$ 
such that 
\begin{align*}
\mu(B_{g_{J_s}}(p_0,r)) 
\subset B
\end{align*}
holds for any $s\le s_{r,B}$. 
 \item[$\star 2$] 
For any piecewise smooth closed path 
$c_{b_0}\colon [0,1] \to X$ 
there exist an open neighborhood 
$B$ of $b_0$ and 
a continuous map $c\colon B\times [0,1] \to X$ such that 
$\mu\circ c(b,t) = b$, $c(b,0)=c(b,1)$, 
$c(b_0,\cdot )=c_{b_0}$ and $c(b,\cdot)$ are piecewise smooth. 
 \item[$\star 3$] 
For any open neighborhood 
$B$ of $b_0$ and a 
continuous map 
$c\colon B\times [0,1] \to X$ such that 
$\mu\circ c(b,t) = b$ and $c(b,\cdot)$ are  piecewise smooth, 
\begin{align*}
\lim_{s\to 0}\sup_{b\in B}\mathcal{L}_{g_{J_s}}(c(b,\cdot))
=0
\end{align*}
holds.
\end{itemize}

Let $\pi\colon S(L,h) \to X$ be 
the natural projection. 
By the connection $\nabla$ we have 
the unique horizontal lift 
$\tilde{c}\colon [0,1]\to S(L,h)$ 
with $\tilde{c}(0)=u_0$ 
for any pair of a piecewise smooth 
path $c\colon [0,1]\to X$ and 
$u_0\in \pi^{-1}(c(0))$. 
\begin{prop}
Assume that 
$\mu^{-1}(b_0)$ is not an $m$-BS fiber 
for any $m\in\Z$. 
For any $p_0\in\mu^{-1}(b_0)$, 
$e^{\sqrt{-1}t}\in S^1$ 
and $\delta>0$, 
there is a piecewise smooth path 
$c\colon [0,1]\to \mu^{-1}(b_0)$ 
with $c(0)=c(1)=p_0$ such that 
its horizontal lift $\tilde{c}$ 
satisfying 
$\tilde{c}(1)=\tilde{c}(0)e^{\sqrt{-1}t'}$ 
and $|t'-t|<\delta$. 
In particular, if we assume $\star 3$, 
then 
$\lim_{s\to 0}{\rm diam}_{\hat{g}_{J_s}}( \pi^{-1}(p_0))
=0$ holds. 
\label{shrink 1}
\end{prop}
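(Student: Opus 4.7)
The plan is to reduce the proposition to a density fact about the holonomy group and then use the analytic hypotheses to control lengths. Since $\mu^{-1}(b_0)$ is Lagrangian, $\omega$ vanishes on it, so $\nabla$ restricted to $L|_{\mu^{-1}(b_0)}$ is flat and the holonomy defines a homomorphism $\pi_1(\mu^{-1}(b_0),p_0)\to S^1$ whose image $H={\rm Hol}(L|_{\mu^{-1}(b_0)},\nabla)\subset S^1$ is finitely generated (the fiber being a compact connected Lagrangian, its fundamental group is finitely generated; for generic fibers this is $\Z^n$). The hypothesis that $\mu^{-1}(b_0)$ is not an $m$-BS fiber for any $m$ says exactly that $H$ is not contained in any finite cyclic subgroup $\Z/m\Z$. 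Since every finite subgroup of $S^1$ is cyclic, $H$ must be infinite, and any finitely generated infinite subgroup of $S^1$ is dense---because not all of its generators can be rational rotations (otherwise the group would be finite), and the cyclic subgroup generated by a single irrational rotation is already dense. This density is the heart of the argument.

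Given density of $H$, the first claim is immediate. For $e^{\sqrt{-1}t}\in S^1$ and $\delta>0$, pick $e^{\sqrt{-1}t'}\in H$ with $|t-t'|<\delta$. By the definition of $H$ together with the path-connectedness of $\mu^{-1}(b_0)$ (which allows one to conjugate by a path in order to base at $p_0$), there is a piecewise smooth closed path $c\colon[0,1]\to\mu^{-1}(b_0)$ with $c(0)=c(1)=p_0$ whose horizontal lift satisfies $\tilde c(1)=\tilde c(0)e^{\sqrt{-1}t'}$.

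For the diameter assertion, fix $\eta>0$ and use density of $H$ to choose finitely many piecewise smooth closed paths $c_1,\ldots,c_N$ at $p_0$ in $\mu^{-1}(b_0)$ whose holonomies $e^{\sqrt{-1}t'_j}$ form an $\eta$-net in $S^1$. Using $\star 2$, extend each $c_j$ to a continuous family $c_j\colon B_j\times[0,1]\to X$ of piecewise smooth closed paths in the nearby fibers, and set $B=\bigcap_j B_j$. By $\star 3$, $\mathcal{L}_{g_{J_s}}(c_j(b_0,\cdot))\to 0$ as $s\to 0$ for each $j$, so there exists $s_0>0$ with $\max_j\mathcal{L}_{g_{J_s}}(c_j)<\eta$ for all $s\le s_0$. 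Given any $u_1,u_2\in\pi^{-1}(p_0)$, write $u_2=u_1e^{\sqrt{-1}t}$ and pick $j$ with $|t-t'_j|<\eta$. The horizontal lift $\tilde c_j$ of $c_j$ starting at $u_1$ ends at $u_1e^{\sqrt{-1}t'_j}$; since $\pi\colon(S(L,h),\hat g_{J_s})\to(X,g_{J_s})$ is a Riemannian submersion with horizontal distribution equal to ${\rm Ker}\,\Gamma$, its length is $\mathcal{L}_{g_{J_s}}(c_j)<\eta$. Joining $u_1e^{\sqrt{-1}t'_j}$ to $u_2$ along the $S^1$-orbit contributes $\sqrt{\sigma}|t-t'_j|<\sqrt{\sigma}\eta$, since the vertical vector field $\sqrt{-1}^\sharp$ has $\hat g$-norm $\sqrt{\sigma}$. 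Thus $d_{\hat g_{J_s}}(u_1,u_2)<(1+\sqrt{\sigma})\eta$, and since $\eta$ was arbitrary, ${\rm diam}_{\hat g_{J_s}}(\pi^{-1}(p_0))\to 0$.

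The main conceptual hurdle is recognizing that the pointwise hypothesis ``not an $m$-BS fiber for any $m$'' upgrades to the dynamical statement that $H$ is dense in $S^1$; once this is in hand, the rest is an exercise in uniformly approximating the fiber direction by horizontal lifts of a fixed finite family of base-loops, whose lengths shrink thanks to $\star 3$.
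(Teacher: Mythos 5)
Your proof is correct and follows essentially the same route as the paper: identify "not an $m$-BS fiber for any $m$" with density of the holonomy group in $S^1$, then use $\star 3$ to make the horizontal lifts of the relevant base loops $\hat{g}_{J_s}$-short. You are somewhat more explicit than the paper about why density holds, about invoking $\star 2$ to produce the family needed before $\star 3$ can be applied, and about the finite $\eta$-net making the diameter bound uniform (the paper's argument for a single fixed $t$ plus the $\sqrt{\sigma}$-Lipschitz dependence on $t$ accomplishes the same thing more tersely); these are refinements, not a different argument.
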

\begin{proof}
Since $\mu^{-1}(b_0)$ is not an $m$-BS fiber 
for any $m\in\Z$, 
the holonomy group of $\nabla|_{\mu^{-1}(b_0)}$ 
may not contained in any proper closed subgroup 
of $S^1$, hence we obtain 
the path $c$ which satisfies the assertion. 
By $\star 3$, 
\begin{align*}
\lim_{s\to 0}\mathcal{L}_{\hat{g}_{J_s}}(\tilde{c})
= \lim_{s\to 0}\mathcal{L}_{g_{J_s}}(c)
= 0
\end{align*}
holds, hence $d_{J_s}(\tilde{c}(0),\tilde{c}(1))
\to 0$ as $s\to 0$. 
Therefore, for any $u_0\in\pi^{-1}(p_0)$, 
$e^{\sqrt{-1}t}\in S^1$ and $\delta$ 
we have 
\begin{align*}
\lim_{s\to 0}d_{J_s}(u_0,u_0e^{\sqrt{-1}t})
\le 
\lim_{s\to 0}d_{J_s}(u_0,u_0e^{\sqrt{-1}t'}) 
+\sigma|t-t'|
<\sigma\delta, 
\end{align*}
which implies $\lim_{s\to 0}d_{J_s}(u_0,u_0e^{\sqrt{-1}t})=0$, 
hence we have 
\begin{align*}
\lim_{s\to 0}{\rm diam}_{\hat{g}_{J_s}}( \pi^{-1}(p_0))
=0.
\end{align*}
\end{proof}

Let $B\subset Y$ be open 
and $\tilde{c}\colon B\times [0,1]\to S(L,h)$ 
be a map such that 
$\tilde{c}_y:=\tilde{c}(y,\cdot)$ is 
one of 
the horizontal lift of $c_y:=c(y,\cdot)$ 
with respect to $\nabla$. 
Let $t_y\in\R$ be defined by 
$\tilde{c}(y,1)=\tilde{c}(y,0)e^{\sqrt{-1}t_y}$, 
which is determined independent of 
the choice of the initial point of $\tilde{c}(y,\cdot)$. 
Then the map $y\mapsto e^{\sqrt{-1}t_y}$ is 
continuous. 

For a sufficiently large integer $N>0$, 
put $t=\frac{2\pi}{N}$ and $\delta=t=\frac{\pi}{N}$ 
and take $c$ and $t'$ as in Proposition 
\ref{shrink 1}, 
then we extend 
$c$ to $c\colon B\times [0,1]\to X$ 
by $\star 2$, where $B$ is an open neighborhood 
of $b_0$. 
Then by the continuity of $e^{\sqrt{-1}t_y}$, 
there is an open neighborhood 
$B_N\subset Y$ of $b_0$ 
such that $\frac{\pi}{N} < t_y< \frac{3\pi}{N}$ 
holds for any $y\in B_N$. 
If we consider the path obtained by 
connecting $k$ copies of $c_y$, 
we can see that 
\begin{align*}
d_{J_s}(\tilde{c}_y(0),\tilde{c}_y(0)e^{\sqrt{-1}kt_y})
\le k\mathcal{L}_{g_{J_s}}(c_y).
\end{align*}
If we consider the path along the fiber of 
$S(L,h)\to X$, we have 
\begin{align*}
d_{J_s}(\tilde{c}_y(0)e^{\sqrt{-1}a},\tilde{c}_y(0)e^{\sqrt{-1}b})
\le \sigma|a-b|.
\end{align*}
Combining these estimates, we can see 
\begin{align*}
d_{J_s}(\tilde{c}_y(0),\tilde{c}_y(0)e^{\sqrt{-1}\theta})
\le N\mathcal{L}_{g_{J_s}}(c_y)
+ \frac{3\pi\sigma}{N}
\end{align*}
for any $\theta\in\R$, which gives 
\begin{align*}
{\rm diam}_{\hat{g}_{J_s}}(\pi^{-1}(\tilde{c}_y(0)))\le N\mathcal{L}_{g_{J_s}}(c_y)
+ \frac{3\pi\sigma}{N}.
\end{align*}
Now we can take $s_N>0$ by 
$\star 3$ such that 
$\mathcal{L}_{g_{J_s}}(c_y)\le \frac{1}{N^2}$ 
for any $0<s\le s_N$ and $y\in B_N$. 
We can also take $0<s_{N,r}\le s_N$ by 
$\star 1$ such that 
$\mu(B_{g_{J_s}}(p_0,r))\subset B_N$ 
holds for all $0<s \le s_{N,r}$. 
Then  
we have 
\begin{align*}
{\rm diam}_{\hat{g}_{J_s}}(\pi^{-1}(\tilde{c}_y(0)))
\le \frac{1+3\pi\sigma}{N}
\end{align*}
for all $y\in \mu(B_{g_{J_s}}(p_0,r))$ 
and $0<s\le s_{N,r}$. 
Thus we obtain the following proposition. 
\begin{prop}
Assume $\star 1\mathchar`- 3$, 
$\mu^{-1}(b_0)$ is not an $m$-BS fiber for any $m$ 
and let $u_0\in\pi^{-1}(p_0)$. 
Then for any $r>0$ and 
$\varepsilon>0$ there is $0<s_{r,\varepsilon}\le s_0$ 
such that 
\begin{align*}
{\rm diam}_{\hat{g}_{J_s}}(\pi^{-1}(x))
\le \varepsilon
\end{align*}
for all $x\in B_{g_{J_s}}(p_0,r)$ and $0<s\le s_{r,\varepsilon}$. 
\end{prop}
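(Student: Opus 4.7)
The plan is to exploit the fact that $\mu^{-1}(b_0)$ failing to be an $m$-BS fiber for every $m$ forces the holonomy group of $\nabla|_{\mu^{-1}(b_0)}$ to be dense in $S^1$. Proposition \ref{shrink 1} then lets me realize any target rotation (up to a prescribed tolerance) by parallel transport along a loop at $p_0$; assumption $\star 2$ propagates such a loop to neighboring fibers, and $\star 3$ makes the $g_{J_s}$-length of the resulting family of loops arbitrarily small as $s\to 0$.

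Given $r,\varepsilon>0$, I first choose an integer $N$ with $(1+3\pi\sigma)/N \le \varepsilon$. Applying Proposition \ref{shrink 1} at $p_0$ with $t=2\pi/N$ and $\delta=\pi/N$ yields a piecewise smooth loop $c_{b_0}$ in $\mu^{-1}(b_0)$ whose horizontal lift realizes a rotation $e^{\sqrt{-1}t'}$ with $\pi/N < t' < 3\pi/N$. Using $\star 2$ I extend $c_{b_0}$ to a continuous family $c\colon B\times[0,1]\to X$ of piecewise smooth loops $c_y:=c(y,\cdot)$ in $\mu^{-1}(y)$. Since the holonomy angle $t_y$ depends only on $\nabla$ and $y\mapsto e^{\sqrt{-1}t_y}$ is continuous, I shrink $B$ to an open neighborhood $B_N$ of $b_0$ on which $\pi/N < t_y < 3\pi/N$ uniformly; crucially $B_N$ is independent of $s$.

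Next, $\star 3$ provides $s_N>0$ such that $\mathcal{L}_{g_{J_s}}(c_y)\le 1/N^2$ for all $y\in B_N$ and $0<s\le s_N$, and $\star 1$ gives $s_{r,\varepsilon}\le s_N$ with $\mu(B_{g_{J_s}}(p_0,r))\subset B_N$ for $s\le s_{r,\varepsilon}$. To finish, pick $x\in B_{g_{J_s}}(p_0,r)$, set $y=\mu(x)$, and consider arbitrary $u,ue^{\sqrt{-1}\theta}\in\pi^{-1}(x)$. Since $t_y>\pi/N$, the multiples $\{kt_y\}_{k=0}^{N-1}$ fill $\R/2\pi\Z$ with gaps of at most $3\pi/N$, so for some $0\le k<N$ we have $|\theta-kt_y|\le 3\pi/N$ modulo $2\pi$. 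The $k$-fold horizontal lift of $c_y$ starting at $u$ produces a path from $u$ to $ue^{\sqrt{-1}kt_y}$ of $\hat{g}_{J_s}$-length at most $k\mathcal{L}_{g_{J_s}}(c_y)\le 1/N$, and closing the rotation along the $S^1$-fiber adds at most $\sigma\cdot 3\pi/N$, giving $d_{J_s}(u,ue^{\sqrt{-1}\theta})\le (1+3\pi\sigma)/N\le \varepsilon$.

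The main obstacle is securing uniformity in $y$ and in $s$ simultaneously: the choice of $N$ (driven by $\varepsilon$) fixes the tolerance $\delta$ in Proposition \ref{shrink 1}, and I must produce a single family $c_y$ whose holonomy stays in $(\pi/N,3\pi/N)$ on a neighborhood $B_N$ that later contains $\mu(B_{g_{J_s}}(p_0,r))$ for all sufficiently small $s$. Fixing $B_N$ \emph{before} invoking $\star 1$ and $\star 3$ is the crux: the $s$-independence of the holonomy allows $B_N$ to be chosen once, and only then do I shrink $s$ to simultaneously shorten the loops and confine the metric ball. A minor subtlety is that the horizontal lift in the final step starts at a preimage of $x$ rather than of $c_y(0)$, but this is harmless since the holonomy rotation of any horizontal lift of $c_y$ equals $e^{\sqrt{-1}t_y}$ independently of the initial point.
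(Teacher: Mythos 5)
Your proof is essentially the paper's own argument, step for step: choose $N$ with $(1+3\pi\sigma)/N\le\varepsilon$, invoke Proposition \ref{shrink 1} with $t=2\pi/N$ and $\delta=\pi/N$, extend the loop to a family $c_y$ via $\star 2$, shrink to $B_N$ by continuity of $y\mapsto e^{\sqrt{-1}t_y}$, then use $\star 3$ and $\star 1$ (in that order, and you correctly emphasize that $B_N$ is fixed \emph{before} shrinking $s$) to get $s_{r,\varepsilon}$, and finally estimate the fiber diameter by concatenating copies of $c_y$ with a short fiber segment.

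One small slip, which you share with the paper's own exposition: the claim that with only $t_y>\pi/N$ the set $\{kt_y\}_{k=0}^{N-1}$ is $3\pi/N$-dense in $\R/2\pi\Z$ is not quite right, since $(N-1)t_y$ can be close to $\pi$, leaving a large uncovered arc. The easy repair is to allow $k$ to range over $\{-N,\ldots,N\}$, where negative $k$ means traversing the reverse of $c_y$ (whose horizontal lift carries holonomy $e^{-\sqrt{-1}t_y}$); then $\{kt_y\}_{|k|\le N}$ spans an interval of length $2Nt_y>2\pi$ with spacing $t_y<3\pi/N$, so the coverage holds and the length bound $N\mathcal{L}_{g_{J_s}}(c_y)+3\pi\sigma/N$ is unchanged. (Equivalently, one could start from $t=3\pi/N$, $\delta=\pi/N$ in Proposition \ref{shrink 1} to force $t_y>2\pi/N$ and keep $k\ge 0$.) This is a cosmetic correction and does not change the structure or conclusion.
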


Before we prove Theorem \ref{main 2}, 
we describe the relation between the convergence 
of principal $G$-bundles and the 
convergence of the base spaces. 
Let $G$ be a compact Lie group, 
$(P,d,\nu)$ be a metric measure space 
with an isometric $G$-action. 
Put $X:=P/G$ and define the distance $\bar{d}$ 
on $X$ by 
\begin{align*}
\bar{d}(\bar{x},\bar{y}):=\inf_{\gamma\in G}d(x,y\gamma),
\end{align*}
where $\bar{x}\in X$ is the equivalence class represented by $x\in P$. 

\begin{prop}
Let $\{ (P_i,d_i,\nu_i,p_i)\}_{i\in\N}$ be a 
sequence of pointed
metric measure spaces with isometric $G$ actions 
and denote by $\pi_i\colon P_i\to X_i=P_i/G$ 
be the quotient maps. 
Suppose that for any $r,\varepsilon>0$ 
there is $i_{r,\varepsilon}\in\N$ 
such that 
\begin{align*}
\sup_{x\in B(p_i,r)}{\rm diam}_{d_i} \pi^{-1}(x) <\varepsilon
\end{align*}
holds for any $i\ge i_{r,\varepsilon}$. 
If 
$\{ (X_i,\bar{d}_i,\bar{\nu}_i,\bar{p}_i)\}_i$ 
converges to 
$(X,\bar{d},\bar{\nu},\bar{p})$ 
with respect to the pointed 
measured Gromov-Hausdorff topology, 
then $\{ (P_i,d_i,\nu_i,p_i)\}_s$ 
converges to $(X,\bar{d},\bar{\nu},\bar{p})$ 
in the sense of the pointed $G$-equivariant 
measured Gromov-Hausdorff topology. 
Here, the $G$-action on $X$ 
is the trivial action. 
\label{quotient conv}
\end{prop}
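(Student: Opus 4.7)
The plan is to transfer the given pointed measured Gromov--Hausdorff convergence $(X_i,\bar d_i,\bar\nu_i,\bar p_i)\to (X,\bar d,\bar\nu,\bar p)$ upstairs to $P_i$ by composing the base approximations with the quotient maps $\pi_i$. Since the $G$-action on $X$ is declared to be trivial and $\pi_i$ is constant on $G$-orbits, any map $P_i\to X$ factoring through $\pi_i$ is automatically $G$-equivariant, so the only real work is in verifying the metric and measure estimates.

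First I would fix $R>0$ and, using the hypothesis, choose Borel $\varepsilon_i$-approximations
\[
\bar\phi_i\colon B(\bar p_i,R_i)\to B(\bar p,R)
\]
with $\varepsilon_i\to 0$, $R_i\to R$, and ${\bar\phi_i}_*(\bar\nu_i|_{B(\bar p_i,R_i)})\to \bar\nu|_{B(\bar p,R)}$ vaguely. Set
\[
\phi_i:=\bar\phi_i\circ\pi_i\colon \pi_i^{-1}(B(\bar p_i,R_i))\to B(\bar p,R).
\]
This is Borel (composition of a continuous and a Borel map) and $G$-equivariant for free.

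The key metric estimate is the comparison
\[
\bar d_i(\pi_i(x),\pi_i(y))\;\le\; d_i(x,y)\;\le\; \bar d_i(\pi_i(x),\pi_i(y)) + 2\,\delta_i(r),
\]
where $\delta_i(r):=\sup_{x\in B(p_i,r)}\mathrm{diam}_{d_i}\pi_i^{-1}(x)$. The lower bound is immediate from the definition of $\bar d_i$; the upper bound follows by choosing $y'\in \pi_i^{-1}(\pi_i(y))$ with $d_i(x,y')$ nearly equal to $\bar d_i(\pi_i(x),\pi_i(y))$ and applying the triangle inequality together with $d_i(y,y')\le \delta_i(r)$ (taking $r$ large enough that $x,y,y'$ lie in $B(p_i,r)$). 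The hypothesis gives $\delta_i(r)\to 0$ for every fixed $r$. Combining this comparison with $|\bar d_i(\pi_i(x),\pi_i(y)) - \bar d(\phi_i(x),\phi_i(y))|<\varepsilon_i$ shows that $\phi_i$ is an $\varepsilon_i'$-approximation on $\pi_i^{-1}(B(\bar p_i,R_i))$ for some $\varepsilon_i'\to 0$. Similarly, $\varepsilon$-density of $\bar\phi_i$ combined with the fact that $\pi_i$ is surjective onto $B(\bar p_i,R_i)$ forces $B(\bar p,R)\subset B(\phi_i(\pi_i^{-1}(B(\bar p_i,R_i))),\varepsilon_i')$.

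For the measure part I would use the tautological identity ${\pi_i}_*\nu_i=\bar\nu_i$ (the quotient measure), which gives
\[
{\phi_i}_*\bigl(\nu_i|_{\pi_i^{-1}(B(\bar p_i,R_i))}\bigr)
= {\bar\phi_i}_*\bigl(\bar\nu_i|_{B(\bar p_i,R_i)}\bigr),
\]
and the right-hand side converges vaguely to $\bar\nu|_{B(\bar p,R)}$ by assumption. Matching this with the definition of pointed $G$-equivariant measured Gromov--Hausdorff convergence in Definition \ref{def GmGH} (where the quotient of the limit space by the trivial $G$-action is itself) completes the argument. The only subtle point, and the step I would spend most care on, is the upper bound in the metric comparison: one must ensure that the representative $y'$ chosen on the orbit of $y$ still lies in a ball where the uniform fiber-diameter bound $\delta_i(r)$ applies, which is handled by enlarging $r$ slightly (from $R_i$ to, say, $R_i+1$) before invoking the hypothesis $\delta_i(R_i+1)\to 0$.
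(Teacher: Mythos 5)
Your proposal is correct and takes essentially the same route as the paper: the paper's proof also composes the base approximations $\bar\phi_i$ with the quotient maps $\pi_i$ and declares the composition a $G$-equivariant $2\varepsilon$-approximation. You have simply supplied the verification that the paper leaves as "one can see": the two-sided bound $\bar d_i(\pi_i(x),\pi_i(y))\le d_i(x,y)\le \bar d_i(\pi_i(x),\pi_i(y))+\delta_i(r)$, the use of ${\pi_i}_*\nu_i=\bar\nu_i$ for the measure part, and the observation that enlarging the radius keeps the needed orbit representative inside the ball where the fiber-diameter bound applies.
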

\begin{proof}
Let $\bar{\phi}_i\colon (B_{X_i}(\bar{p}_i,r),\bar{p}_i) \to (X,\bar{p})$ be 
$\varepsilon$-approximations 
given by the pointed Gromov-Hausdorff 
convergence of $(X_i,\bar{p}_i)$.
Then one can see that 
$\phi:=\bar{\phi}_i\circ \pi_i\colon (\pi_i^{-1}(B(\bar{p}_i,r)),p_i) \to (X,\bar{p})$ 
are $G$-equivariant $2\varepsilon$-approximations. 
Using these maps 
one can show the assertion.  
\end{proof}

\begin{proof}[Proof of Theorem \ref{main 2}]
Assume $\spadesuit$ and 
that there is $\kappa\in \R$ such that 
${\rm Ric}_{g_{J_s}} \ge \kappa g_{J_s}$. 
Let $u\in S|_{\mu^{-1}(y)}$ and assume 
that $\mu^{-1}(y)$ is not a Bohr-Sommerfeld 
fiber of $L^m$ for any $m>0$. 
On the neighborhood $U$ of $\mu^{-1}(y)$, 
we may write 
\begin{align*}
g_{J_s}|_U=g_A
\end{align*}
for some $A=A(s,x,\theta)$. 
Here we consider the pointed 
measured Gromov-Hausdorff 
limit of $(X,g_{J_s},\frac{\mu_{g_{J_s}}}{K\sqrt{s}^n},p)$ 
as $s\to 0$ 
for some $p\in \mu^{-1}(y)$ and 
$K>0$. 
In the same way as Subsection \ref{subsec conv}, 
it suffices to consider the limit of 
$g_{A'}$, where $A'(s,x,\theta) = sA^0(0,\theta)$ 
and 
$\bar{Q}={\rm Im}(A^0)(0,\theta) $ 
is independent of $\theta$. 
Notice that we already had 
$P_{ij}^0(0,\theta)=\bar{P}_{ij}+\frac{\del^2\mathcal{H}}{\del \theta^i\del\theta^j}$ 
in Subsection \ref{subsec conv} and 
\begin{align*}
F_s^*g_{A'} &=
{}^t\left( \sqrt{s\bar{\Theta}}d\theta - \sqrt{s\bar{\Theta}}^{-1}
\bar{P}\bar{Q}^{-1} dx\right)\cdot 
\left( \sqrt{s\bar{\Theta}}d\theta - \sqrt{s\bar{\Theta}}^{-1}
\bar{P}\bar{Q}^{-1} dx\right)\\
&\quad\quad
+ s^{-1}\cdot {}^t dx\cdot 
\bar{\Theta}^{-1}\cdot dx
\end{align*}
holds by \eqref{principal metric eq}, 
where 
\begin{align*}
F_s(x,\theta) &=
\left( x_1+s\frac{\del \mathcal{H}}{\del\theta^1},
\ldots, x_n+s\frac{\del \mathcal{H}}{\del\theta^n},
\theta\right),\\
\bar{\Theta}&=\bar{Q}+\bar{P}\bar{Q}^{-1}\bar{P}.
\end{align*}
Then by the transformation 
$y=\sqrt{s\bar{\Theta}}^{-1}x$ 
and $\tau=\sqrt{s\bar{\Theta}} \theta$, 
we have 
\begin{align*}
F_s^*g_{A'}
&= 
{}^t\left( d\tau -
\bar{P}\bar{Q}^{-1} dy\right)\cdot 
\left( d\tau -
\bar{P}\bar{Q}^{-1} dy\right)
+ {}^t dy \cdot dy.
\end{align*}
The above expression implies 
that $(y,\tau)\mapsto y$ is 
the Riemannian submersion to 
the Euclidean space. 
Since the diameters of 
the fibers of the submersion converge to $0$ 
as $s\to 0$, we have proved that 
$(X,F_s^*g_{A'},p_0)$ pointed Gromov-Hausdorff 
converges to $(\R^n,{}^tdy\cdot dy,0)$. 
The convergence of the measure 
is shown by the similar argument with 
the proof of 
Proposition \ref{push of measure} 
and Theorem \ref{conv 1}. 
By Proposition \ref{quotient conv} 
we obtain the assertion. 
\end{proof}

As a consequence of \ref{main 2}
we obtain the latter half 
of Theorem \ref{main 3}, 
since the $S^1$-action on 
$\R^n$ in Theorem \ref{main 2} 
is trivial and $(C^{\infty}(\R^n)\otimes\C)^{\rho_k}
=\{ 0\}$ for any $k>0$.

\paragraph{\bf Acknowledgment.}
The author would like to express his 
gratitude to 
Professors Hajime Fujita, Hiroshi Konno 
and Takahiko Yoshida 
for their several useful comments and advices.

\bibliographystyle{plain}

\begin{thebibliography}{10}

\bibitem{andersen1997}
J{\o}rgen~Ellegaard Andersen.
\newblock Geometric quantization of symplectic manifolds with respect to
  reducible non-negative polarizations.
\newblock {\em Communications in mathematical physics}, 183(2):401--421, 1997.

\bibitem{ArnoldAvez1968french}
V.~I. Arnold and A.~Avez.
\newblock {\em Probl\`emes ergodiques de la m\'{e}canique classique}.
\newblock Monographies Internationales de Math\'{e}matiques Modernes, No. 9.
  Gauthier-Villars, \'{E}diteur, Paris, 1967.

\bibitem{BFMN2011}
Thomas Baier, Carlos Florentino, Jos\'{e}~M. Mour\~{a}o, and Jo\~{a}o~P. Nunes.
\newblock Toric {K}\"{a}hler metrics seen from infinity, quantization and
  compact tropical amoebas.
\newblock {\em J. Differential Geom.}, 89(3):411--454, 2011.

\bibitem{BMN2010}
Thomas Baier, Jos\'{e}~M. Mour\~{a}o, and Jo\~{a}o~P. Nunes.
\newblock Quantization of abelian varieties: distributional sections and the
  transition from {K}\"{a}hler to real polarizations.
\newblock {\em J. Funct. Anal.}, 258(10):3388--3412, 2010.

\bibitem{Cheeger-Colding3}
Jeff Cheeger and Tobias~H. Colding.
\newblock On the structure of spaces with {R}icci curvature bounded below.
  {III}.
\newblock {\em J. Differential Geom.}, 54(1):37--74, 2000.

\bibitem{Chern1995}
Shiing-shen Chern.
\newblock {\em Complex manifolds without potential theory (with an appendix on
  the geometry of characteristic classes)}.
\newblock Universitext. Springer-Verlag, New York, second edition, 1995.

\bibitem{Duistermaat1980}
J.~J. Duistermaat.
\newblock On global action-angle coordinates.
\newblock {\em Comm. Pure Appl. Math.}, 33(6):687--706, 1980.

\bibitem{FFY2010}
Hajime Fujita, Mikio Furuta, and Takahiko Yoshida.
\newblock Torus fibrations and localization of index i-polarization and acyclic
  fibrations.
\newblock {\em J. Math. Sci. Univ. Tokyo}, 17(1):1--26, 2010.

\bibitem{Fukaya1987}
Kenji Fukaya.
\newblock Collapsing of {R}iemannian manifolds and eigenvalues of {L}aplace
  operator.
\newblock {\em Invent. Math.}, 87(3):517--547, 1987.

\bibitem{FukayaYamaguchi1994}
Kenji Fukaya and Takao Yamaguchi.
\newblock Isometry groups of singular spaces.
\newblock {\em Math. Z.}, 216(1):31--44, 1994.

\bibitem{HamiltonKonno2014}
Mark~D. Hamilton and Hiroshi Konno.
\newblock Convergence of {K}\"{a}hler to real polarizations on flag manifolds
  via toric degenerations.
\newblock {\em J. Symplectic Geom.}, 12(3):473--509, 2014.

\bibitem{HoffmanWielandt1953}
A.~J. Hoffman and H.~W. Wielandt.
\newblock The variation of the spectrum of a normal matrix.
\newblock {\em Duke Math. J.}, 20:37--39, 1953.

\bibitem{Kasue2011}
Atsushi Kasue.
\newblock Spectral convergence of {R}iemannian vector bundles.
\newblock {\em Sci. Rep. Kanazawa Univ.}, 55:25--49, 2011.

\bibitem{Kubota2016}
Yosuke Kubota.
\newblock The joint spectral flow and localization of the indices of elliptic
  operators.
\newblock {\em Ann. K-Theory}, 1(1):43--83, 2016.

\bibitem{KuwaeShioya2003}
Kazuhiro Kuwae and Takashi Shioya.
\newblock Convergence of spectral structures: a functional analytic theory and
  its applications to spectral geometry.
\newblock {\em Comm. Anal. Geom.}, 11(4):599--673, 2003.

\bibitem{Lott-Dirac2002}
John Lott.
\newblock Collapsing and {D}irac-type operators.
\newblock In {\em Proceedings of the {E}uroconference on {P}artial
  {D}ifferential {E}quations and their {A}pplications to {G}eometry and
  {P}hysics ({C}astelvecchio {P}ascoli, 2000)}, volume~91, pages 175--196,
  2002.

\bibitem{Lott-form2002}
John Lott.
\newblock Collapsing and the differential form {L}aplacian: the case of a
  smooth limit space.
\newblock {\em Duke Math. J.}, 114(2):267--306, 2002.

\bibitem{MarkusMeyer1974}
L.~Markus and K.~R. Meyer.
\newblock {\em Generic {H}amiltonian dynamical systems are neither integrable
  nor ergodic}.
\newblock American Mathematical Society, Providence, R.I., 1974.
\newblock Memoirs of the American Mathematical Society, No. 144.

\bibitem{Tyurin2002}
N.~A. Tyurin.
\newblock Letter to the editors: ``{D}ynamic correspondence in algebraic
  {L}agrangian geometry [{I}zv. {R}oss. {A}kad. {N}auk {S}er. {M}at. {\bf 66}
  (2002), no. 3, 175--196; mr1921813].
\newblock {\em Izv. Ross. Akad. Nauk Ser. Mat.}, 68(3):219--220, 2004.

\bibitem{Tyurin2007}
Nikolai~A. Tyurin.
\newblock Geometric quantization and algebraic {L}agrangian geometry.
\newblock In {\em Surveys in geometry and number theory: reports on
  contemporary {R}ussian mathematics}, volume 338 of {\em London Math. Soc.
  Lecture Note Ser.}, pages 279--318. Cambridge Univ. Press, Cambridge, 2007.

\bibitem{Woodhouse1992}
N.~M.~J. Woodhouse.
\newblock {\em Geometric quantization}.
\newblock Oxford Mathematical Monographs. The Clarendon Press, Oxford
  University Press, New York, second edition, 1992.
\newblock Oxford Science Publications.

\bibitem{yoshida2019adiabatic}
Takahiko Yoshida.
\newblock Adiabatic limits, theta functions, and geometric quantization.
\newblock {\em arXiv preprint arXiv:1904.04076}, 2019.

\end{thebibliography}

\begin{comment}
\subsection{Ricci form}

\begin{align*}
\end{align*}
\begin{prop}
\end{prop}
\begin{proof}
\end{proof}
\begin{definition}
\normalfont
\end{definition}
\begin{rem}
\normalfont
\end{rem}
\begin{lem}
\end{lem}
\begin{thm}[\cite{}]
\end{thm}
\Big(\Big)   \Big\{\Big\}

\[ A=
\left (
\begin{array}{ccc}
 &  &  \\
 &  & \\
 &  & 
\end{array}
\right ).
\]

\begin{itemize}
\setlength{\parskip}{0cm}
\setlength{\itemsep}{0cm}
 \item[$(1)$] 
 \item[$(2)$] 
\end{itemize}

\[ 
\left.
\begin{array}{ccc}
\R^n/{\rm Ker}\,\Phi & \rightarrow & T^n \\
\rotatebox{90}{$\in$} & & \rotatebox{90}{$\in$} \\
\theta\, {\rm mod}\,{\rm Ker}\,\Phi & \mapsto & \theta\, {\rm mod}\,\Z^n
\end{array}
\right.
\]

In the above proof, 
we have used the following lemma.
\begin{lem}
Let $R,S\in M_n\R$ be symmetric matrices 
and suppose $S$ is positive definite. 
Let $a\in\R$. 
Then $R\ge aS$ (resp. $R\le aS$) holds 
iff the eigenvalues 
of $S^{-1}R$ are larger (resp. smaller) 
than or equal to $a$. 
\label{positive def}
\end{lem}
\begin{proof}
By the definition of $R\ge aS$, 
if we take a symmetric matrix $T\in M_n\R$ 
such that $T^2=S$ then all of the 
eigenvalues of $T^{-1}RT^{-1}$ are larger than or 
equal to $a$. 
Since $S^{-1}R = T^{-1}(T^{-1}RT^{-1})T$, 
the eigenvalues of $S^{-1}R$ and $T^{-1}RT^{-1}$ 
are coincides, hence we have the assertion. 
\end{proof}

\end{comment}

\end{document}